\documentclass[10pt]{article}
\usepackage[utf8]{inputenc}
\frenchspacing
\usepackage[english]{babel}
\usepackage{amsmath}
\usepackage{amsthm}
\usepackage{tikz}
\usetikzlibrary{svg.path}
\usetikzlibrary{shapes,arrows,chains}
\usepackage{amsfonts}
\usepackage{amssymb}
\usepackage{graphicx}
\usepackage{xcolor}
\usepackage{bbm}
\usepackage{csquotes}
\usepackage{latexsym}
\usepackage{float}
\usepackage[backend=biber, style=alphabetic, sorting=nyt]{biblatex}
\addbibresource{ambib.bib}
\newtheorem{thm}{Theorem}[section]
\newtheorem*{thm*}{Theorem}
\newtheorem{lem}[thm]{Lemma}

\newtheorem{prop}[thm]{Proposition}
\newtheorem{coro}[thm]{Corollary}
\newtheorem{conj}[thm]{Conjecture}

\theoremstyle{definition}
\newtheorem{defn}[thm]{Definition}

\newtheorem{rem}[thm]{Remark}
\newtheorem{exa}[thm]{Example}

\newtheorem{fact}[thm]{Fact}

\newcommand{\trdeg}{\textnormal{trdeg}}

\newcommand{\codim}{\textnormal{codim}}

\renewcommand{\exp}{\textnormal{exp}}

\renewcommand{\o}[1]{\overline{ #1 }}

\newcommand{\im}{\textnormal{im}}

\renewcommand{\epsilon}{\varepsilon}
\renewcommand{\phi}{\varphi}

\newcommand{\trop}{\textnormal{Trop}}
\newcommand{\mv}{\textnormal{MV}}
\newcommand{\aff}{\textnormal{aff}}
\newcommand{\Log}{\textnormal{Log}}
\newcommand{\am}{\mathcal{A}_W}
\newcommand{\ctimes}{\mathbb{C}^\times}

\renewcommand{\Re}{\textnormal{Re}}
\renewcommand{\Im}{\textnormal{Im}}

\newcommand{\ini}{\textnormal{in}}
\newcommand{\relint}{\textnormal{relint}}
\renewcommand{\star}{\textnormal{star}}
\newcommand{\stint}{\cap_{st}}

\newcommand{\conv}{\textnormal{conv}}

\newcommand{\face}{\textnormal{face}}
\newcommand{\dhaus}{d_{\textnormal{Haus}}}

\newcommand{\tauc}{\tau_{\mathbb{C}}}
\newcommand{\relc}{\Re(L)_\mathbb{C}}
\newcommand{\supp}{\textnormal{Supp}}
\title{Exponential Sums Equations and Tropical Geometry}
\author{Francesco Paolo Gallinaro}
\begin{document}
\maketitle

\let\thefootnote\relax\footnote{Keywords: Exponential-Algebraic Closedness, exponential sums equations, tropical geometry.}
\let\thefootnote\relax\footnote{2020 MSC: 03C65, 11L99, 14T90.}

\begin{abstract}
Zilber's Exponential-Algebraic Closedness Conjecture states that algebraic varieties in $\mathbb{C}^n \times (\ctimes)^n$ intersect the graph of complex exponentiation, unless that contradicts the algebraic and transcendence properties of $\exp$. We establish a case of the conjecture, showing that it holds for varieties which split as the product of a linear subspace of the additive group and an algebraic subvariety of the multiplicative group. This amounts to solving certain systems of exponential sums equations, and it generalizes old results of Zilber, which required the linear subspace to either be defined over a generic subfield of the real numbers, or it to be any subspace defined over the reals assuming unproved conjectures from Diophantine geometry and transcendence theory. The proofs use the theory of amoebas and tropical geometry.
\end{abstract}

\section{Introduction}

It is well-known in model theory that the field $\mathbb{C}$ of complex numbers is \textit{strongly minimal}: all its subsets that are definable in first-order logic in the language of rings are finite or cofinite. This is essentially due to the Chevalley-Tarski theorem which asserts that projections of constructible sets in algebraically closed fields are constructible: that implies that definable subsets of $\mathbb{C}$ are solution sets of boolean combinations of polynomial equations.

If we add a symbol for the complex exponential to the language, however, strong minimality does not hold anymore: the kernel of $\exp$ is a countable definable subset of $\mathbb{C}$. In fact, the ring of integers is definable as the multiplicative stabilizer of the kernel, and this leads to the failure of good model-theoretic properties. The structure obtained in this way (the complex numbers in a language $\{+,-,\cdot,0,1,\exp \}$) is denoted $\mathbb{C}_\exp$.

Nonetheless, it is true that there is no obvious way to define (in first-order logic using only exponentials and polynomials) the real numbers in $\mathbb{C}_\exp$. A conjecture of Zilber, the \textit{Quasiminimality Conjecture}, suggests that all definable sets should be countable or cocountable.

\begin{conj}[Quasiminimality Conjecture, \cite{Zil97}]\label{quaconj}
The structure $\mathbb{C}_\exp$ is \textit{quasiminimal}, i.e$.$, all definable subsets of $\mathbb{C}_\exp$ are countable or cocountable.
\end{conj}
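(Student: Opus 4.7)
The conjecture is a well-known open problem, so any plan must follow the indirect route pioneered by Zilber: the goal is to show that $\mathbb{C}_\exp$ is isomorphic (as an exponential ring) to the unique pseudoexponential field $\mathbb{B}$ of cardinality $2^{\aleph_0}$, which Zilber has already proved to be quasiminimal. Because quasiminimality is preserved under isomorphism of the first-order structure $(F,+,\cdot,\exp)$, such an isomorphism would transport it from $\mathbb{B}$ to $\mathbb{C}_\exp$ and thus establish Conjecture~\ref{quaconj}.

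Executing this plan reduces the conjecture to verifying Zilber's axioms for pseudoexponential fields on $\mathbb{C}_\exp$. The algebraic axioms (algebraically closed field of characteristic zero, $\exp$ a surjective group homomorphism $(\mathbb{C},+) \to (\ctimes,\cdot)$, standard kernel $2\pi i \mathbb{Z}$) are automatic, and the Countable Closure Property for $\mathbb{C}_\exp$ has been established by Bays--Kirby without appeal to any open conjecture. This leaves the two genuinely difficult axioms: Schanuel's Conjecture, asserting that $\trdeg_\mathbb{Q}(z_1,\dots,z_n,\exp(z_1),\dots,\exp(z_n)) \geq n$ for $\mathbb{Q}$-linearly independent $z_1,\dots,z_n$; and Exponential-Algebraic Closedness, asserting that every rotund and free subvariety $V \subseteq \mathbb{C}^n \times (\ctimes)^n$ meets the graph of $\exp$.

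My plan would be to concentrate first on Exponential-Algebraic Closedness, since there is a growing body of partial results for it (the present paper being one such contribution), whereas essentially no non-trivial case of Schanuel is known. Concretely, I would try to extend the tropical-geometric machinery of this paper beyond the product situation $V = L \times W$, aiming to deform an arbitrary rotund, free variety $V$ through a family whose special fibre splits as a product of a linear subspace with an algebraic subvariety of $(\ctimes)^n$, and to verify that generic exponential points on the special fibre can be followed back along the deformation. The hope is that the amoeba and Newton-polytope data, which control the split case, can be made to vary continuously (or at least semicontinuously) with the deformation parameter.

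The principal obstacle, even granting a complete solution to Exponential-Algebraic Closedness, is Schanuel's Conjecture itself, on which no serious progress exists. A more realistic intermediate goal, in the spirit of Bays--Kirby, would be to identify the minimal fragment of Schanuel actually required for the categoricity argument --- perhaps only the existence of sufficiently generic exponential transcendence bases --- and to attempt to verify that fragment for $\mathbb{C}_\exp$ directly. Even so, I would expect this Schanuel step to remain the true hard part: the tropical methods of the present paper give genuine leverage on the existence half of the picture, but not on the transcendence half that Schanuel controls.
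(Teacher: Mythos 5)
This statement is a \emph{conjecture} in the paper, not a theorem: the paper records it as an open problem (Conjecture~\ref{quaconj}) and offers no proof, so there is no proof of the paper to compare your proposal against. Your proposal, sensibly, does not claim to be a proof either but rather a research programme, which is the right posture for an open conjecture.

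That said, there is a substantive error in your account of what is known. You present Schanuel's Conjecture as a remaining obstruction to quasiminimality, writing that ``the principal obstacle, even granting a complete solution to Exponential-Algebraic Closedness, is Schanuel's Conjecture itself.'' This is exactly what the Bays--Kirby theorem (cited in the paper as Theorem~1.5 of \cite{BK18}) eliminates: it states that Exponential-Algebraic Closedness \emph{alone} implies quasiminimality of $\mathbb{C}_\exp$, with no Schanuel hypothesis. The paper spells this out explicitly: ``In other words, they were able to remove the dependence of the Quasiminimality Conjecture on Schanuel's Conjecture.'' Schanuel remains essential if one wants the stronger conclusion that $\mathbb{C}_\exp$ is \emph{isomorphic} to Zilber's $\mathbb{B}$ (since the axioms of $\mathbb{B}$ include a Schanuel-type condition), but quasiminimality itself follows from EAC without it. So the route you sketch --- prove EAC, then worry about Schanuel --- overstates the second step: for the conjecture as actually stated, solving EAC is the whole task, and your concluding pessimism about the ``transcendence half'' is misplaced.

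Your suggestion of deforming a general rotund, free variety to a split one $L \times W$ and tracking exponential points along the family is a reasonable speculative direction, but as stated it is only a hope; the paper does not pursue it, and its main theorem (Theorem~\ref{chap3main}) handles only the split case and in fact only expects to yield quasiminimality of a \emph{reduct} of $\mathbb{C}_\exp$, not the full structure.
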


The methods which have been developed to attack the quasiminimality conjecture, such as the technique of \textit{quasiminimal excellence} (see for example \cite{BHHTK}) are expected to yield a good structure theory of definable sets also in higher dimension: a proof of the Quasiminimality Conjecture would likely give that \textit{exponential-algebraic varieties}, definable in powers of $\mathbb{C}$ using polynomials and exponentials, behave overall similarly to algebraic varieties, provided we avoid some exceptional cases like $\mathbb{Z}$.

As an approach to his conjecture, Zilber proved in \cite{Zil05} that there is a quasiminimal exponential field $\mathbb{B}_\exp$ which is very similar to $\mathbb{C}_\exp$, and conjectured them to be isomorphic. There are two big obstructions on the way to the proof of this isomorphism: \textit{Schanuel's Conjecture} and \textit{Exponential-Algebraic Closedness}. The former is a famous problem in transcendental number theory, which predicts a lower bound for transcendence of the exponential function.

\begin{conj}[Schanuel, see {\autocite[p.~30]{Lan66}}]\label{schconj}
Let $z_1,\dots,z_n \in \mathbb{C}$ be linearly independent over $\mathbb{Q}$. Then $$\trdeg(z_1,\dots,z_n,\exp(z_1),\dots,\exp(z_n)) \geq n.$$
\end{conj}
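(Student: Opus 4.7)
Schanuel's Conjecture is the central open problem in transcendental number theory, and despite sixty years of effort no approach has produced a complete proof; any plan here is therefore honestly a description of which approximations are available, where they stop, and what the real obstruction is. The most direct positive evidence is Ax's theorem, which establishes the exact analogue in the setting of formal power series: if $y_1,\ldots,y_n$ are power series in several variables over $\mathbb{C}$ with no constant term and $\mathbb{Q}$-linearly independent, then $\trdeg_\mathbb{C}\mathbb{C}(y_1,\ldots,y_n,\exp(y_1),\ldots,\exp(y_n)) \geq n$ (plus a correction by the rank of a Jacobian). My first move would be to try to transport Ax's argument, which rests on the logarithmic-derivative identity $\d \exp(z) = \exp(z)\,\d z$ in a differential field, to the field of complex numbers viewed with some extra structure. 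This breaks down at the very first step: $\mathbb{C}$ admits no non-trivial derivations compatible with the analytic exponential, and closing that gap is essentially what Schanuel demands.

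A second, complementary line of attack is to try to promote the known numerical cases. Lindemann--Weierstrass gives the conjecture whenever $z_1,\ldots,z_n$ are algebraic, and Baker's theorem on linear forms in logarithms gives the dual statement for $z_i = \log \alpha_i$ with $\alpha_i$ algebraic. One would proceed by induction on the transcendence degree of $z_1,\ldots,z_n$, combining these with the Ax--Schanuel statement for $\mathbb{G}_m^n$ and with the Pila--Wilkie counting theorem for the graph of $\exp$ restricted to an appropriate definable family, in the hope that a putative counterexample could be specialized to one contradicting Baker or Lindemann--Weierstrass. Concretely, the strategy would be to write a hypothetical algebraic relation among $z_1,\ldots,z_n,\exp(z_1),\ldots,\exp(z_n)$ as a single point on a definable variety and then try to move it to an algebraic point where known transcendence results apply.

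The main obstacle, and the reason Schanuel is still open, is the absence of any mechanism for converting functional or o-minimal transcendence into numerical transcendence at a single specific tuple. Ax's theorem and its Ax--Schanuel descendants live in a differential field; Baker and Nesterenko deliver height bounds only for algebraic configurations; Pila--Wilkie controls rational and algebraic points of definable sets, not arbitrary $\mathbb{Q}$-linearly independent complex ones. The gap between \emph{generic} and \emph{arbitrary} is precisely the difficulty: a would-be counterexample is a single transcendental configuration, with no surrounding algebraic family to exploit and no natural derivation available, so none of the existing machinery engages. A genuine proof would seem to require a new way of encoding the kernel of $\exp$ together with its arithmetic Galois structure as a transcendence constraint, and without such an idea I would expect any serious attempt along the lines above to stall at exactly this translation step.
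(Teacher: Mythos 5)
You correctly recognize that the statement is Schanuel's Conjecture, which the paper states as a \emph{conjecture} (not a theorem) and does not prove; indeed it is one of the central open problems in transcendental number theory, and the paper itself remarks that it is known only for $n=1$ and considered extremely hard already for $n=2$. Your honest survey — Ax's differential-field analogue, the known cases via Lindemann--Weierstrass and Baker, and the fundamental obstruction that no mechanism exists to pass from functional or o-minimal transcendence to numerical transcendence at a specific tuple — is accurate, and declining to fabricate a proof is the right call here. There is no proof in the paper for you to be compared against.
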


Very little is known about Schanuel's Conjecture: it is true for $n=1$, but already for $n=2$ it is considered an extremely hard problem, as it would for example imply algebraic independence of $e$ and $\pi$ (as it implies $\trdeg(1,i\pi,e,-1)\geq 2$). 

The \textit{Exponential-Algebraic Closedness Conjecture}, which will be stated precisely below (Conjecture \ref{eac}) predicts sufficient conditions for an algebraic variety $V \subseteq \mathbb{C}^n \times (\ctimes)^n$ to intersect the graph of (a Cartesian power of) the exponential function. In a sense, it is a generalization of a dual form of Schanuel's Conjecture: Schanuel's Conjecture states that algebraic varieties of small dimension defined over the rationals cannot contain points on the graph of $\exp$, unless these satisfy a $\mathbb{Q}$-linear relation in the first coordinates. Exponential-Algebraic Closedness, on the other hand, predicts that all algebraic varieties which ``should", in a precise sense, intersect the graph of $\exp$ do so.

This conjecture became particularly relevant after the following result of Bays and Kirby was proved.

\begin{thm}[{\autocite[Theorem~1.5]{BK18}}]
If the Exponential-Algebraic Closedness Conjecture holds, then $\mathbb{C}_\exp$ is quasiminimal.
\end{thm}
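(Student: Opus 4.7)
The plan is to realize $\mathbb{C}_\exp$ as a quasiminimal pregeometry structure in the sense of Bays--Hart--Hyttinen--Kesälä--Kirby, from which quasiminimality then follows by the general theorem for such structures. The relevant pregeometry should be \emph{exponential-algebraic closure} $\ecl$: roughly, $a \in \ecl(A)$ iff there is an algebraic variety $V \subseteq \mathbb{C}^n \times (\ctimes)^n$, defined over the subfield generated by $A \cup \exp(A)$, of the ``correct'' dimension (so that the appropriate projection from $V \cap \Gamma$ has finite fibers, where $\Gamma$ is the graph of componentwise exponentiation), such that $a$ appears as a coordinate of some point of $V \cap \Gamma$. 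Reflexivity, monotonicity, finite character, and transitivity of $\ecl$ are essentially formal; the real content lies in the exchange axiom.

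The main step where the Exponential-Algebraic Closedness Conjecture intervenes is exchange. I would argue by contradiction: if $a \in \ecl(Ab) \setminus \ecl(A)$ but $b \notin \ecl(Aa)$, then the variety $V(b)$ witnessing $a \in \ecl(Ab)$ should deform in a positive-dimensional family as $b$ varies, since $b$ is exp-algebraically free over $Aa$. If each member $V(b')$ of the family still satisfies the hypotheses of EAC, then each produces a point on the graph of $\exp$, giving too many candidates for $a$ and contradicting its dependence over $Ab$. The hard part will be ensuring that the deformed varieties remain admissible for EAC (correct dimension, the relevant freeness or ``rotundity'' conditions, etc.) and that the resulting intersection points genuinely witness distinct exponential-algebraic dependences; this is a delicate dimension-counting argument and is the heart of the proof.

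For the countable closure property, a routine cardinality argument should suffice: over a countable $A$ there are only countably many varieties over the generated field, and the finite-fiber clause in the definition of $\ecl$ ensures that each contributes only finitely many new elements. Once $(\mathbb{C}_\exp, \ecl)$ is established as a quasiminimal pregeometry structure, quasiminimality follows from the abstract theorem that in such a structure any definable set is controlled by the $\ecl$-dimension and is therefore countable or cocountable. It is worth noting that, strikingly, this strategy does not require Schanuel's Conjecture as an additional assumption: the predimension inequality that is usually invoked in the pseudo-exponential setting is replaced here by the purely existential content of EAC.
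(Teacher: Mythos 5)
The paper does not prove this statement: it is reproduced verbatim from \autocite[Theorem~1.5]{BK18} and used purely as motivation, with the entire burden of proof deferred to Bays and Kirby. There is therefore no in-paper argument to compare yours against.

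For what it is worth, your sketch captures the broad shape of the Bays--Kirby strategy (exhibit $\ecl$ as a pregeometry, verify the axioms of a quasiminimal pregeometry structure, invoke the general \cite{BHHTK} machinery), but it misplaces where EAC actually enters. Exchange for the exponential-algebraic closure pregeometry is \emph{unconditional}: it follows from Ax's functional version of Schanuel's conjecture, which is a theorem, and was established by Kirby as a property of arbitrary exponential fields well before and independently of any EAC hypothesis. EAC is consumed in a different place, namely in verifying that $\mathbb{C}_\exp$ realizes enough types over countable closed sets (the strong existential closedness and $\aleph_0$-homogeneity-over-closed-sets clauses of the quasiminimal class): intersecting a free rotund variety with $\Gamma_\exp$ is precisely what produces the witnesses those axioms demand, and your deformation argument for exchange is not how --- or where --- the conjecture is used. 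Likewise, the countable closure property is itself a nontrivial theorem (again Kirby, via Ax--Schanuel and fiber-dimension control in the spirit of Remmert), not the routine cardinality count you describe, since one must actually bound the set of new points that each witnessing variety contributes. Your closing observation that Schanuel's Conjecture is not required as a separate hypothesis is correct, and is the principal contribution of \cite{BK18}.
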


In other words, they were able to remove the dependence of the Quasiminimality Conjecture on Schanuel's Conjecture; the former was therefore seen as more accessible. This motivated several results around Exponential-Algebraic Closedness which have been established in the last few years, see for example \cite{AKM}, \cite{BM}, \cite{DFT21}, \cite{K19}, \cite{MZ}.

In this paper we establish Exponential-Algebraic Closedness for a specific class of subvarieties of $\mathbb{C}^n \times (\ctimes)^n$, those that split as the product of a linear subspace of $\mathbb{C}^n$ and an algebraic subvariety of $(\ctimes)^n$. More precisely, our main theorem is as follows.

\begin{thm*}[Theorem \ref{chap3main}]
Let $L \times W \subseteq \mathbb{C}^n \times (\ctimes)^n$ be an additively free, rotund algebraic variety, with $L \leq \mathbb{C}^n$ a linear space and $W \subseteq (\ctimes)^n$ an algebraic variety. 

Then $L \times W \cap \Gamma_\exp \neq \varnothing$.
\end{thm*}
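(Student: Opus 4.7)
The aim is to find $z \in L$ with $\exp(z) \in W$. Writing $z = x + iy$, the real part $x$ must lie in the amoeba $\am = \Log|W| \subseteq \mathbb{R}^n$. The plan therefore splits into two parts: (i) find a point in the intersection $\Re(L) \cap \am$; (ii) lift this to a genuine $z \in L$ with $\exp(z) \in W$ by selecting a compatible imaginary part $y$, controlled via the coamoeba $\cam$.

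For step (i), I would invoke the tropical geometry of $W$. The amoeba $\am$ is a closed subset of $\mathbb{R}^n$ whose recession (Bergman) fan is the tropical variety $\trop(W)$, a rational polyhedral complex. Additive freeness of $L$ rules out $\Re(L)$ lying in a rational hyperplane, which would correspond to a nontrivial monomial map vanishing on $\exp(L)$ and thereby constraining $W$. Rotundity of $L \times W$ translates to dimensional inequalities for the projections of $\Re(L)$ and $\trop(W)$ along every rational projection $\pi \colon \mathbb{R}^n \to \mathbb{R}^k$, which together with balancing of tropical varieties should force $\Re(L)$ to meet $\trop(W)$ in a robust way. A Maslov-type rescaling, where $\frac{1}{t}\am \to \trop(W)$ as $t \to \infty$ in Hausdorff distance on compact sets, can then be reversed via a compactness / continuity argument to obtain an actual intersection $\Re(L) \cap \am \neq \varnothing$.

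For step (ii), fix $x_0 \in \Re(L) \cap \am$. The fibre $W \cap \Log^{-1}(x_0)$ is (generically) a positive-dimensional real subvariety of the torus $(\mathbb{R}/2\pi\mathbb{Z})^n$ obtained under $\textnormal{Arg}$, and the slice $L \cap (x_0 + i\mathbb{R}^n)$ constrains $y$ to a real affine subspace of $\mathbb{R}^n$. Additive freeness ensures this affine subspace has dense image in a subtorus of the expected dimension, while rotundity supplies the dimensional compatibility needed for a non-trivial intersection; a winding number / topological degree argument on the torus, together with continuity as $x_0$ varies in $\Re(L) \cap \am$, should produce the desired $y$.

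The heart of the argument is step (i): converting an asymptotic intersection with $\trop(W)$ into a finite-scale intersection with $\am$. Zilber's earlier results handled this in the favourable case where $L$ is defined over a generic subfield of $\mathbb{R}$ (where transcendence provides flexibility), and conditionally on transcendence or Diophantine conjectures in general. The unconditional general case demands explicit convex-geometric control over $\am$ and the convex components of its complement, which are indexed by vertices of a Newton polytope of a defining equation of $W$; showing that rotundity on \emph{every} rational projection prevents $\Re(L)$ from being swallowed by such a complement component is the main technical obstacle I foresee, and is where the interplay between amoebas and tropical geometry should do the decisive work.
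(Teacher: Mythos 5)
Your two-step strategy --- intersect $\Re(L)$ with the amoeba $\am$, then recover an imaginary part by a density/degree argument on the angular torus --- essentially reconstructs the paper's proof \emph{only} in the case where $L$ is defined over $\mathbb{R}$. In that case $\exp(i\Re(L))$ is indeed dense in $\mathbb{S}_1^n$ (additive freeness forces density of $\Re(L)$ in $\mathbb{R}^n/2\pi\mathbb{Z}^n$), and one can combine a non-degeneracy result of Khovanskii type with openness of the $\delta$-map $\delta(l,w)=w/\exp(l)$ to pass from a point of $\Re(L)\cap\am$ to a genuine intersection. The paper does exactly this, though the route to $\Re(L)\cap\am\neq\varnothing$ goes through perturbing $W$ by some $s\in\mathbb{S}_1^n$ so that the associated exponential-sum system is non-degenerate at infinity (Lemmas \ref{nonrot} and \ref{shortsys}), rather than through the informal Maslov-rescaling argument you sketch.

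The genuine gap is that your outline does not distinguish between $L$ defined over $\mathbb{R}$ and $L$ not defined over $\mathbb{R}$, and the latter is precisely where the paper's main novelty lies. When $L$ is not $\mathbb{R}$-defined, $\exp(L)$ is \emph{not} dense in $\exp(L)\cdot\mathbb{S}_1^n$ --- the paper's running example $L=\{z_2=iz_1\}$ gives a \emph{closed, discrete} $\exp(L)\cdot\mathbb{S}_1^{-n}$-slice, so the image of your affine slice $L\cap(x_0+i\mathbb{R}^n)$ in $(\mathbb{R}/2\pi\mathbb{Z})^n$ is discrete, not dense in a subtorus of the expected dimension, and no winding-number argument is available. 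Moreover, since $\dim_\mathbb{R}\Re(L)>\dim_\mathbb{C}L$ in this case, $\Re(L)\cap\am$ is expected to be positive-dimensional and merely finding a point in it is far from sufficient: one must exert precise control on the imaginary parts. The paper handles this via an entirely separate mechanism: Theorem \ref{complextropical} (via Kazarnovskii's pseudo-mixed volume) produces a positive-dimensional cell $\tau\in\trop(W)$ with $L\times W_\tau$ rotund; Tevelev's tropical compactification is used to construct a sequence in some translate $s\cdot\exp(L)\cap W_\tau$ approaching the toric boundary orbit $\mathcal{O}_\tau$ (Lemma \ref{geomcomtrop}); and an induction on $\dim W$, quotienting by the torus $\mathbb{T}_\tau$ and then climbing back via a Hausdorff-limit and proper-mapping argument (Lemma \ref{reductiontosequence}), ultimately produces the intersection. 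None of this machinery --- initial varieties, tropical compactifications, the reduction via quotients, the induction --- appears in your proposal, so as written it could at most recover Zilber's real case and would not establish the theorem in general.
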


Here $\Gamma_\exp$ denotes the graph $$\{(z_1,\dots,z_n,w_1,\dots,w_n) \in \mathbb{C}^n \times (\ctimes)^n \mid \exp(z_j)=w_j \, \forall j=1,\dots,n \}$$ of the exponential function. Here \textit{additively free} means that $L$ is not contained in a translate of a subspace of $\mathbb{C}$ which is defined over $\mathbb{Q}$. The precise definition of \textit{rotund} will be given later (Definition \ref{freerotund}): it requires certain dimension inequalities to be satisfied, such as $\dim L + \dim W \geq n$.

It is very likely that this theorem implies quasiminimality of a reduct of $\mathbb{C}_\exp$, namely the two-sorted structure $\mathbb{C}^{\mathbb{C}}=(\mathbb{C},\exp,\mathbb{C})$ in which the domain of $\exp$ is $\mathbb{C}$ seen as a $\mathbb{C}$-vector space and the codomain $\mathbb{C}$ with the full field structure. 

The systems of equations which are solved by Theorem \ref{chap3main} are systems of \textit{exponential sums equations}, i.e$.$ equations that take the form $\sum_{j=1}^k \exp(\phi_j(z))$ where $\phi:\mathbb{C}^n \rightarrow \mathbb{C}$ is a linear function. Thus, for example, the theorem implies solvability of easy equations such as $\exp(z)+\exp(\sqrt{2}z)+1=0$ and $\exp(z)+\exp(iz)+1=0$, which by abusing notations and allowing for multivalued raising to powers operators on the complex numbers correspond to the equations $w+w^{\sqrt{2}}+1=0$ and $w+w^i+1=0$. By $w^{\sqrt{2}}$ (resp$.$ $w^i$) here we mean any determination of $\exp(\sqrt{2}\log w)$ (resp$.$ $\exp(i\log w)$). We will use these examples to present the different geometric behaviours that this kind of systems may take, see for instance Example \ref{example} below.

Theorem \ref{chap3main} generalizes results of Zilber from \cite{Zil02} and \cite{Zil11}, which required strong geometric assumptions to prove the conjecture for a smaller class of varieties. The precise statement of Zilber's theorem is given below (Theorem \ref{zilrtp}); however, let us note here that his result needed the linear subspace of $\mathbb{C}^n$ to be defined over the real numbers, while we are able to prove the theorem for arbitrary complex subspaces. His approach was to use a theorem of Khovanskii to find solutions to almost all systems and then use some transcendence statement to make sure that the approximate solutions obtained in this way form sequences that converge to actual solutions. The theorem of Khovanskii, which we also need, is part of the theory of \textit{fewnomials}, a precursor of tropical geometry: our argument improves Zilber's in that it uses tropical geometry more systematically, together with some easy complex analysis, to weaken the conditions needed on an approximate solution to yield an actual solution. 

Our proofs use techniques from \textit{tropical geometry} and from the theory of \textit{amoebas}. Specifically, in the case in which $L$ is defined over the reals we also apply the theorem of Khovanskii, but we use amoebas to simplify and extend to every such $L$ the process of obtaining solutions from the approximations. In the case in which $L$ is not defined over the reals we use tropical geometry to compare the behaviour of $\exp(L)$ and $W$ as their points approach $0$ or $\infty$; in particular, we use Tevelev's \textit{tropical compactifications} to replace $W$ by a complete variety whose behaviour at $0$ and $\infty$ is well understood.

The paper is structured as follows. In Section \ref{prelims} we gather some results from complex geometry which are used in the rest of the paper, specifically the \textit{Open Mapping Theorem} and the \textit{Proper Mapping Theorem} which are necessary to describe, locally and globally, the images of complex analytic functions. In Section \ref{eacsec} we state precisely the Exponential-Algebraic Closedness Conjecture and deduce some consequences of a theorem of Kirby which will be important later on. 

In Section \ref{amandtrop} we introduce amoebas and tropical geometry, and in Section \ref{expsumeq} the framework of exponential sums equations which is needed to interpret the question of intersecting varieties of the form $L \times W$ with the graph of $\exp$.

Section \ref{rtpsec} contains the first main theorem of the paper, Theorem \ref{realpowers}: this is Theorem \ref{chap3main} with the further assumption that $L$ is defined over the reals. 

Section \ref{comptro} introduces more technical tools from tropical geometry, due to Kazarnovskii, that are necessary to treat the case in which $L$ is not defined over the reals. Finally, Section \ref{comprtp} contains the proof of the main theorem.

\textbf{Acknowledgements.} The author wishes to thank his supervisor, Vincenzo Mantova, for suggesting to work on this problem, for many valuable comments, and for spotting (at least) a mistake in an early version of this paper. Thanks are due to Jonathan Kirby for spotting some inaccuracies and to the referee for many useful comments. This research was done as part of the author's PhD project, supported by a scholarship of the School of Mathematics at the University of Leeds.

\section{Some Preliminaries from Complex Geometry}\label{prelims}

In this short section we gather some results from complex geometry which will be used later on. For more details on the theory of complex analytic sets we refer the reader to \cite{Ch} (especially Chapter 1 and Appendix 2).

\begin{defn}
	A continuous map $f:X \rightarrow Y$ of topological spaces is \textit{proper} if for every compact set $K \subseteq Y$, $f^{-1}(K)$ is compact.
\end{defn}

It is interesting to see when the subset of a Cartesian product has proper projection on one of the coordinates.

\begin{prop}[{\autocite[{}~3.1]{Ch}}]\label{proper}
	Let $X$ and $Y$ be locally compact, Hausdorff topological spaces, $D \subseteq X$ and $G \subseteq Y$ open subsets with $\o{G}$ compact.
	
	Let $A$ be a relatively closed subset of $D \times G$. The projection $\pi:A \rightarrow D$, $(x,y) \mapsto x$, is proper if and only if $A$ does not have limit points on $D \times \partial G$. 
\end{prop}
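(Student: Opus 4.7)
The plan is to prove both implications separately. For the direction ``$\pi$ proper $\Rightarrow$ $A$ has no limit points on $D \times \partial G$'' I would argue by contradiction: suppose $(x_0,y_0) \in D \times \partial G$ is a limit point of $A$ and pick a sequence $(x_n,y_n) \in A$ with $(x_n,y_n) \to (x_0,y_0)$ in $X \times Y$. The set $K = \{x_n : n \in \mathbb{N}\} \cup \{x_0\}$ is compact in $D$, so by properness $\pi^{-1}(K)$ is compact inside $A \subseteq D \times G$. Some subsequence of $(x_n,y_n)$ therefore converges to a point $(x_0, y^*) \in A$, and in particular $y^* \in G$; but uniqueness of limits in the Hausdorff space $X \times Y$ forces $y^* = y_0 \in \partial G$, contradicting $y^* \in G$ (disjoint from $\partial G$ since $G$ is open).

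For the converse, given a compact $K \subseteq D$, the target is to show $\pi^{-1}(K) = A \cap (K \times G)$ is compact. The move I would make is to enlarge $G$ to $\overline{G}$, which is compact by hypothesis, and consider $B := \overline{A} \cap (K \times \overline{G})$, where $\overline{A}$ denotes the closure of $A$ inside the ambient $X \times Y$. Since $K \times \overline{G}$ is compact and $B$ is closed in it, $B$ is compact, so it suffices to prove $B = \pi^{-1}(K)$. The hypothesis that $A$ has no limit points on $D \times \partial G$ (together with $K \subseteq D$) yields $\overline{A} \cap (K \times \partial G) = \varnothing$, so in fact $B = \overline{A} \cap (K \times G)$; and because $A$ is relatively closed in $D \times G$, any point of $\overline{A}$ already lying in $K \times G \subseteq D \times G$ must belong to $A$, giving $B = A \cap (K \times G) = \pi^{-1}(K)$.

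There is no real analytic or combinatorial difficulty here; the only subtlety worth flagging, and hence the closest thing to an ``obstacle'', is keeping two different closures straight. The hypothesis asserts $A$ is closed in $D \times G$, while the phrase ``limit points on $D \times \partial G$'' has to be read as limit points taken in the ambient $X \times Y$ that happen to fall inside $D \times \partial G$; once the two notions of closure are separated the argument is routine point-set topology, with compactness of $\overline{G}$ serving only to turn $K \times \overline{G}$ into the compact ``ceiling'' that bounds $\pi^{-1}(K)$.
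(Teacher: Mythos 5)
Your proposal is essentially correct and closely parallels the paper's argument, though one direction needs a small repair and the other is actually more carefully written than the original.

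In the $(\Rightarrow)$ direction, after observing that $\pi^{-1}(K)$ is compact, you invoke sequential compactness to extract a convergent subsequence of $(x_n,y_n)$ inside $A$. That step is not justified in general: compact subsets of (locally compact) Hausdorff spaces need not be sequentially compact. The detour is also unnecessary, because you already have the limit in hand: the sequence $(x_n,y_n)$ lies in $\pi^{-1}(K)$ and converges in $X \times Y$ to $(x_0,y_0)$; since a compact subset of a Hausdorff space is closed, $\pi^{-1}(K)$ would have to contain $(x_0,y_0)$, but $\pi^{-1}(K) \subseteq A \subseteq D \times G$ while $(x_0,y_0) \in D \times \partial G$, which is disjoint from $D \times G$. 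This is exactly the shape of the paper's argument, which instead passes through a compact neighbourhood $U$ of $\lim_n x_n$ supplied by local compactness and concludes that $\pi^{-1}(U)$ cannot be closed.

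In the $(\Leftarrow)$ direction your route --- intersecting the ambient closure $\overline{A}$ with the compact box $K \times \overline{G}$, removing $K \times \partial G$ via the no-limit-points hypothesis, and then using relative closedness of $A$ in $D \times G$ to land back in $A$ --- is a tidy piece of point-set topology and, frankly, more rigorously phrased than the paper's corresponding paragraph, which gestures at ``any sequence in $\pi^{-1}(K)$ that has a limit in $D$ has a limit in $A$'' without resolving why that yields compactness. You and the paper lean on the same two hypotheses (compactness of $\overline{G}$ and absence of limit points on $D \times \partial G$), but your version extracts compactness of $\pi^{-1}(K)$ directly from ``closed subset of a compact set is compact'' rather than from a convergence argument, which is both shorter and avoids the sequential/compact distinction entirely. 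Your closing remark about keeping the relative closure in $D \times G$ separate from the ambient closure in $X \times Y$ is indeed the key bookkeeping point, and you handle it correctly.
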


\begin{proof}
	$(\Rightarrow)$ Suppose the projection is proper, and there is a sequence $\{a_j \}_{j \in \omega} \subseteq A$ such that $a=\lim_j a_j \in D \times \partial G$. Since $D$ and $G$ are open, $a \notin D \times G$ (and therefore $a \notin A$); however, since $a \in D \times \partial G$ the sequence $\{\pi(a_j) \}_{j \in \omega}$ converges to some $b \in D$.
	
	Then consider a compact neighbourhood $U$ of $b$ in $D$. We have that $\pi^{-1}(U)$ is not compact, because $a \notin \pi^{-1}(U)$ (it is not in the domain $A$ of $\pi$) but it is the limit of the $a_j$'s, which lie in $A$.
	
	$(\Leftarrow)$ Suppose $A$ does not have limit points on $D \times \partial G$, and let $K \subseteq D$ be a compact set. Then $\pi^{-1}(K)$ is compact because any sequence in it that has a limit in $D$ has a limit in $A$, given that $A$ is closed in $D \times G$ without limit points on $D \times \partial G$. 
\end{proof}

Proper maps are important in complex analysis because they preserve analyticity of complex analytic sets. As usual, the dimension of the fibres of a holomorphic map is relevant when we want to study the image of the map.

\begin{defn}\label{fiberdim}
	Let $f:A \rightarrow B$ be a holomorphic map between complex analytic sets. For any $z \in A$, let $\dim_z f$ denote the codimension of the fibre $$\dim_z f:=\dim A - \dim f^{-1}(f(z))$$ and $\dim f$ the maximal such value, $$\dim f:=\max_{z \in A} \dim_z f.$$
\end{defn}

\begin{thm}[Remmert's Proper Mapping Theorem, {\autocite[{}~5.8]{Ch}}]\label{remprop}
	Let $A$ be a complex analytic set and $Y$ a complex manifold, and suppose $f:A \rightarrow Y$ is proper. 
	
	Then $f(A)$ is an analytic subset of $Y$, and $$\dim f(A)=\dim f$$
\end{thm}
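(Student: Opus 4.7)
The plan is to separate the claim into three parts: $f(A)$ is closed in $Y$, $f(A)$ is locally analytic, and $\dim f(A) = \dim f$. Closedness is immediate from properness: if $f(z_n) \to y$ in $Y$, then $\{f(z_n)\} \cup \{y\}$ is compact, so by properness of $f$ the sequence $\{z_n\}$ has a subsequence converging to some $z \in A$ with $f(z) = y$.

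For local analyticity near $y_0 \in f(A)$, the strategy is to reduce to the case of a finite holomorphic map. The fiber $F = f^{-1}(y_0)$ is a compact analytic subset of $A$ with finitely many irreducible components, so it suffices to work locally around a single point $z_0 \in F$ and, after passing to an irreducible component of $A$ containing $z_0$, assume $A$ is irreducible of dimension $d$ with $\dim_{z_0} F = k$. I would embed $A$ and $Y$ locally into Euclidean spaces and choose a generic linear projection $\pi : Y \to \mathbb{C}^{d-k}$ such that, on a sufficiently small relatively compact neighborhood $U$ of $z_0$ in $A$, the composition $\pi \circ f|_U$ becomes a finite proper holomorphic map onto an open subset of $\mathbb{C}^{d-k}$. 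Here Proposition \ref{proper} is the crucial tool: after shrinking $U$ so that $F \cap \partial U = \emptyset$, it translates the absence of limit points on the boundary into properness of the restricted map, and a dimension count via the local parametrization theorem for analytic sets guarantees that the generic fiber of $\pi \circ f|_U$ is discrete, hence finite by properness.

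Once $\pi \circ f|_U$ is a finite proper holomorphic map, the image $f(U)$ is realised inside $Y$ as the zero set of finitely many holomorphic equations: for each remaining coordinate of $Y$, build a monic polynomial in that coordinate whose roots over each point of $\mathbb{C}^{d-k}$ are the values along the corresponding finite fiber of $\pi \circ f|_U$. Its coefficients are locally bounded symmetric functions of those values, hence holomorphic off the branch locus and extending across it by Riemann's removable singularity theorem. This exhibits $f(A)$ locally around $y_0$ as an analytic set, which combined with the first step gives global analyticity.

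For the dimension statement, when $A$ is irreducible the classical fiber dimension theorem for holomorphic maps directly gives $\dim f(A) = \dim A - \min_{y \in f(A)} \dim f^{-1}(y) = \max_z \dim_z f = \dim f$; the general case follows by decomposing $A$ into irreducible components and taking the maximum. The main obstacle is the finite-map reduction — choosing $\pi$ generic enough that $\pi \circ f$ becomes finite after shrinking, while simultaneously preserving properness on the chosen neighborhood, is where most of the technical work lies, and Proposition \ref{proper} is precisely the tool that makes this balance between genericity and properness manageable.
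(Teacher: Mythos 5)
The paper does not give a proof of this statement at all: it is cited directly from Chirka's book \autocite[{}~5.8]{Ch}, so there is no ``paper's own proof'' to compare against. Evaluating your sketch on its own merits, the closedness step, the strategy of reducing to finite maps, and the symmetric-function/monic-polynomial argument for analyticity of the image of a finite proper map are all in the right spirit. But there is a genuine gap in the reduction step.

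You propose to choose a generic linear projection $\pi : Y \to \mathbb{C}^{d-k}$ so that $\pi \circ f|_U$ becomes a finite proper map. This cannot work when $k > 0$: the fiber of $\pi \circ f$ over a point $t \in \mathbb{C}^{d-k}$ is $f^{-1}(\pi^{-1}(t))$, which contains the entire fiber $f^{-1}(y)$ for every $y \in \pi^{-1}(t) \cap f(U)$. Since these $f$-fibers have dimension $\approx k > 0$, the composition $\pi \circ f$ has positive-dimensional fibers no matter how $\pi$ is chosen -- projecting the \emph{target} cannot kill dimension in the \emph{source}-side fibers. Relatedly, the shrinking you describe (``shrink $U$ so that $F \cap \partial U = \emptyset$'') forces $U$ to contain the whole irreducible component of the compact set $F$ through $z_0$, not merely a small coordinate ball around $z_0$; this is consistent with Proposition \ref{proper} but is in tension with the local Euclidean embedding you invoke in the same breath.

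The correct way to reduce to the finite case is to slice the \emph{source}: after arranging (via Proposition \ref{proper}) that $f|_U$ is proper onto a neighborhood $V$ of $y_0$, one produces a $(d-k)$-dimensional analytic subset $S \subseteq U$ transversal to the fiber $F$ at $z_0$ (for instance by intersecting with $k$ generic affine hyperplanes of the ambient $\mathbb{C}^N$ in which $A$ is locally embedded), shrinks so that $f|_S$ is proper with discrete fibers, and then argues that $f(S) = f(U)$ near $y_0$ by comparing dimensions. The generic-projection trick you have in mind belongs at that stage, applied to $A$ (or to the graph of $f$ inside $A \times Y$), not to $Y$. With that replacement, the rest of your outline -- the finite-map case via elementary symmetric functions and Riemann extension, then the dimension formula by decomposing into irreducible components -- is sound.
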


When the map is not proper there is not much that can be said about its image; the best one can hope for is the following result.

\begin{prop}[{\autocite[{}~3.8]{Ch}}]\label{holoimage}
	Let $A$ be a complex analytic set and $Y$ a complex manifold, and suppose $f:A \rightarrow Y$ is holomorphic.
	
	Then $f(A)$ is contained in a countable union of analytic subsets of $Y$, of dimension not exceeding $\dim f$.
\end{prop}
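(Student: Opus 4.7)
The plan is to reduce the statement to a local one and then invoke Remmert's Proper Mapping Theorem (Theorem \ref{remprop}). A complex analytic set $A$ in a manifold is locally the common zero set of finitely many holomorphic functions, hence in particular second countable, so any open cover of $A$ admits a countable subcover. It therefore suffices to show that every point $z_0 \in A$ has an open neighborhood $U \subseteq A$ such that $f(U)$ is contained in an analytic subset of some open subset of $Y$ of dimension at most $\dim f$; taking a countable subcover yields countably many analytic sets whose union contains $f(A)$.

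For the local step, fix $z_0 \in A$ and set $w_0 = f(z_0)$. The fiber $F := f^{-1}(w_0)$ is a closed analytic subset of $A$, and near $z_0$ it decomposes into finitely many irreducible branches; let $k := \dim_{z_0} F$, so that $\dim_{z_0} f = \dim_{z_0} A - k$. The idea is to choose neighborhoods so that Proposition \ref{proper} applies. First I would pick a relatively compact open neighborhood $U \Subset A$ of $z_0$ small enough that the connected component of $F \cap \overline{U}$ through $z_0$ is compactly contained in $U$ (this is possible by restricting to a sufficiently small open neighborhood, using that $F$ is closed and has only finitely many local branches at $z_0$). Then, using continuity of $f$ together with the compactness of $\partial U$, shrink $U$ to $U'$ and pick a sufficiently small open neighborhood $V$ of $w_0$ in $Y$ so that $f^{-1}(V) \cap \partial U' = \varnothing$; equivalently, $f^{-1}(V) \cap U'$ has no limit points on $\partial U' \times V$ inside $U' \times V$. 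By Proposition \ref{proper}, the restriction
\[
f|_{f^{-1}(V) \cap U'} : f^{-1}(V) \cap U' \longrightarrow V
\]
is proper. Remmert's theorem (Theorem \ref{remprop}) then ensures that its image is analytic in $V$, with dimension at most $\dim_{z_0} f$, and in particular at most $\dim f$. Since $f(U') \subseteq f(f^{-1}(V) \cap U')$ (every point of $U'$ close to $z_0$ lies in $f^{-1}(V)$, so after further shrinking $U'$ this inclusion holds), this provides the required local analytic majorant.

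The main obstacle is the careful selection of $U'$ and $V$ that makes the restriction proper: one must use the local analytic structure of $F$ — specifically the fact that, unlike for a general continuous map, the fiber has only finitely many local branches at $z_0$ — to isolate the component through $z_0$ from other sheets of $F$ and prevent sequences in $f^{-1}(V) \cap U'$ from escaping to $\partial U'$ while their images converge inside $V$. Once this separation is achieved, the rest of the argument is a routine application of the proper mapping theorem and second countability.
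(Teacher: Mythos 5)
The paper cites this as a fact from Chirka and gives no proof of its own, so there is no internal argument to compare against; I can only assess your proposal on its merits.

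Your global reduction is fine: an analytic set is second countable, so once you know a local version (each point of $A$ has a neighborhood whose image lies in an analytic subset of $Y$ of dimension at most $\dim f$), a countable subcover finishes the job.

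The local step, however, has a genuine gap, and it sits at the exact point where the proposition is non-trivial. You claim you can shrink $U$ so that the connected component of $F \cap \overline{U}$ through $z_0$ is compactly contained in $U$, citing the finiteness of local branches of the fiber $F = f^{-1}(f(z_0))$. This fails whenever $\dim_{z_0} F \geq 1$. A positive-dimensional irreducible germ of an analytic set at $z_0$ is, for every neighborhood $U$, connected in $U$ and not closed in the ambient space; its closure always reaches $\partial U$. So the component of $F \cap \overline{U}$ through $z_0$ necessarily meets $\partial U$, for every choice of $U$. The fact that there are only finitely many branches bounds their number, not their reach; it does not let you trap the branch through $z_0$ inside $U$. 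Consequently, for every neighborhood $V$ of $w_0$, the open set $f^{-1}(V) \supseteq F$ meets $\partial U'$, so the hypothesis of Proposition \ref{proper} can never be arranged: when the fiber is positive-dimensional the restriction $f\colon f^{-1}(V) \cap U' \to V$ is never proper (take a sequence along $F$ tending to $\partial U'$; its $f$-images are all $w_0$). When the fiber is zero-dimensional at $z_0$ your argument does work, but then $\dim_{z_0} f = \dim_{z_0} A$ and one just applies Remmert directly; the whole point of the proposition is to handle positive-dimensional fibers.

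What is missing is the standard device for that case: replace $A$ near $z_0$ by a generic local slice $A \cap P$, where $P$ is an affine plane through $z_0$ (in ambient local coordinates) of dimension $\dim_{z_0} A - \dim_{z_0} F$, chosen so that $F \cap P$ is discrete at $z_0$. Then $f|_{A \cap P}$ has discrete fibers near $z_0$, your properness set-up goes through for it, and Remmert gives a local analytic set of dimension $\leq \dim_{z_0} f \leq \dim f$ containing $f(z_0)$. But one must then face a second issue you do not address: the image of the slice need not contain the image of a full neighborhood of $z_0$ in $A$ (consider $f(z_1,z_2) = (z_1, z_1 z_2)$ near the origin, where the local rank is $1$ but a neighborhood has $2$-dimensional image). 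Handling this requires a stratification of $A$ by fiber dimension (equivalently, an induction on $\dim A$): on the open dense stratum where fiber dimension is minimal the slice images suffice, and the lower-dimensional exceptional locus is treated inductively. Without both the slicing step and the stratification/induction, the argument does not close.
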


Of a similar flavour to the Proper Mapping Theorem is the Open Mapping Theorem, which says that if the fibres of a map have the ``right'' dimension then the map is open. The reader may recall the one variable version of this fact from a basic course in complex analysis: in that case, the statement is that any non-constant holomorphic map $f:U \subseteq \mathbb{C} \rightarrow \mathbb{C}$ is open.

\begin{thm}[Open Mapping Theorem, {\autocite[Appendix~2, Theorem~2]{Ch}}]\label{remop}
	Let $A$ be a complex analytic set and $Y$ a complex manifold. A holomorphic map $f:A \rightarrow Y$ is open if and only if $\dim f^{-1}(z)=\dim A - \dim Y$ for every $z \in f(A)$.
\end{thm}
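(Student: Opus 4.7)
The plan is to handle the two directions separately, relying on Remmert's Proper Mapping Theorem (Theorem \ref{remprop}) for sufficiency and on Proposition \ref{holoimage} together with the semicontinuity of fibre dimension for necessity; the crux in each case is a local analysis around a chosen point $a \in A$ with $f(a) = b$.

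For sufficiency, suppose every fibre of $f$ has dimension $\dim A - \dim Y$, and let $U \ni a$ be a prescribed open set. I would cut $A$ inside $U$ by an analytic slice $A'$ through $a$ with $\dim A' = \dim Y$, chosen in general position so that $A' \cap f^{-1}(b) = \{a\}$ as germs — the dimension count $\dim A' + \dim f^{-1}(b) - \dim A = 0$ makes this possible via the local parametrisation theorem for analytic sets. Pick a relatively compact neighbourhood $G \ni a$ in $A'$ with $A' \cap \overline{G} \cap f^{-1}(b) = \{a\}$; then $f(A' \cap \partial G)$ is a compact subset of $Y$ missing $b$, so one may choose a connected open $D \ni b$ disjoint from it. Proposition \ref{proper} makes $f \colon A' \cap G \cap f^{-1}(D) \to D$ proper, and Theorem \ref{remprop} identifies its image as an analytic subset of $D$ of dimension $\dim A' - 0 = \dim D$; such a subset of a connected complex manifold must equal the whole manifold, so $f(U) \supseteq D$, a neighbourhood of $b$.

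For necessity, openness of $f$ makes $f(A)$ open in $Y$, so of dimension $\dim Y$, and Proposition \ref{holoimage} then gives $\dim f \ge \dim Y$: some fibre has dimension at most $\dim A - \dim Y$. To promote "some" to "every", consider for each integer $d > \dim A - \dim Y$ the excess locus $S_d = \{z \in A : \dim_z f^{-1}(f(z)) \ge d\}$, which is analytic in $A$ by the semicontinuity of fibre dimension. A fibre-dimension count — using that each local fibre component through a point of $S_d$ is itself contained in $S_d$ — gives $\dim f|_{S_d} \le \dim A - d < \dim Y$, so Proposition \ref{holoimage} forces $f(S_d)$ to be nowhere dense in $Y$. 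Openness of $f$ at a point $a \in S_d$, together with a careful bookkeeping of how the preimages under $f$ of a small ball around $b$ split between $S_d$ and its complement, then yields the required contradiction and forces $S_d$ to be empty.

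The main obstacle I foresee is the local slicing in sufficiency: producing $A'$ with isolated intersection against $f^{-1}(b)$ on a genuine analytic neighbourhood of $a$, not merely at the tangent-cone level, is a standard but nontrivial application of local analytic geometry, after which the rest of the argument reduces cleanly to the two cited propositions. The necessity direction also needs the upper semicontinuity of fibre dimension to make the loci $S_d$ analytic, and some care in the final contradiction step since $S_d$ need not be open in $A$; this is the other place where one genuinely leaves the framework of Propositions \ref{proper} and \ref{holoimage}.
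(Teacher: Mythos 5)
Theorem \ref{remop} is quoted from Chirka's book and not proved in the paper, so there is no in-paper proof for your attempt to be compared with. Assessed on its own merits, your sufficiency direction (constant fibre codimension $\Rightarrow$ open) follows a reasonable and standard line: slice transversally to the fibre through $a$, use Proposition \ref{proper} to make the restricted map proper over a small ball $D$, and invoke Theorem \ref{remprop} to identify the local image with an analytic subset of $D$ of full dimension, hence all of $D$. Producing the slice rigorously on a possibly singular $A$ is a real but standard piece of local analytic geometry, which you flag appropriately.

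The necessity direction, however, contains a genuine gap. You correctly note that $S_d$ is analytic, that every local fibre component through a point of $S_d$ lies inside $S_d$, hence $\dim f|_{S_d} \le \dim A - d < \dim Y$, and that $f(S_d)$ is therefore small by Proposition \ref{holoimage}. But the concluding sentence, appealing to ``careful bookkeeping'' to produce a contradiction, is exactly where the real proof must live, and the obstruction is not cosmetic. Openness of $f$ on $A$ does not make $f|_{S_d}$ open; knowing that $f(U \cap S_d)$ is nowhere dense while $f(U)$ and $f(U \setminus S_d)$ are open only yields $b \in \partial f(U \setminus S_d)$, which is not yet a contradiction. Nor does your sketch say which $a \in S_d$ to pick, and the choice matters: at some points of $S_d$ the map can still send small neighbourhoods onto open sets, with the failure of openness occurring only at other points of $S_d$, so a contradiction cannot be extracted at an arbitrary $a \in S_d$ by this bookkeeping. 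Filling this in is the substance of the cited proof and does not reduce to Propositions \ref{proper}, \ref{remprop} and \ref{holoimage} alone; flagging the gap, as you do, is not the same as closing it.
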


An immediate corollary of the Open Mapping Theorem is the following.

\begin{coro}\label{remco}
	Let $A$ be a complex analytic set, $Y$ a complex manifold, $f:A \rightarrow Y$ holomorphic and suppose $f^{-1}(f(z))$ has dimension $\dim A - \dim Y$. 
	
	Then there is an open neighbourhood $U$ of $z$ such that $f$ is open on $z$.
\end{coro}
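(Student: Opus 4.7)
The strategy is to derive the corollary from the Open Mapping Theorem (Theorem~\ref{remop}) by showing that the hypothesis on the single fibre over $f(z)$ propagates to all fibres in some open neighbourhood $U$ of $z$. The key ingredients are the two standard dimension bounds for a holomorphic map $f\colon A \to Y$ from an analytic set to a complex manifold: on the one hand, at every point $w \in A$ one has the lower bound $\dim_w f^{-1}(f(w)) \geq \dim_w A - \dim Y$; on the other hand, the local fibre dimension function $w \mapsto \dim_w f^{-1}(f(w))$ is upper semicontinuous on $A$, since for each integer $k$ the set $\{w \in A : \dim_w f^{-1}(f(w)) \geq k\}$ is analytic and hence closed.

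First I would replace $A$ by a neighbourhood of $z$ that is pure-dimensional of dimension $\dim A$; this is achieved by discarding the closed union of the irreducible components of $A$ that do not pass through $z$, and it does not affect the hypothesis. Next I would apply the upper semicontinuity with $k = \dim A - \dim Y + 1$: since by assumption $z$ is not in this sublevel set, there is an open neighbourhood $U$ of $z$ in $A$ on which $\dim_w f^{-1}(f(w)) \leq \dim A - \dim Y$ for every $w \in U$. Combining this with the lower bound yields the equality $\dim_w f^{-1}(f(w)) = \dim A - \dim Y$ throughout $U$, and passing to the global fibre dimension (which equals the local one at some point of the fibre) we conclude $\dim f^{-1}(f(w)) = \dim A - \dim Y$ for every $w \in U$.

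Finally, regarding $f$ as a holomorphic map $f|_U \colon U \to Y$ from the pure $(\dim A)$-dimensional analytic set $U$ to the complex manifold $Y$, the previous step shows that the hypothesis of the Open Mapping Theorem is satisfied: every fibre of $f|_U$ has dimension $\dim U - \dim Y$. Therefore $f|_U$ is open, which is exactly the local openness of $f$ near $z$ that the corollary asks for.

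The only real obstacle is bookkeeping: making sure that the notion of dimension at a point used here is the one compatible with the formulation of Theorem~\ref{remop}, and that the upper-semicontinuity statement and the lower bound on fibre dimension really do pinch the fibre dimension to be constant on a whole neighbourhood of $z$ rather than merely at $z$ itself. Both facts are standard for holomorphic maps of analytic sets (see \cite{Ch}), so once they are in hand the corollary follows as a direct application of Theorem~\ref{remop}.
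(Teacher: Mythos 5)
Your proof is correct and is exactly the natural deduction the paper has in mind when it declares the corollary ``immediate'' (it gives no written proof). The two facts you invoke — the lower bound $\dim_w f^{-1}(f(w)) \geq \dim_w A - \dim Y$ coming from the rank-type inequality for holomorphic maps, and the upper semicontinuity of the local fibre-dimension function — are both standard in Chirka and do pinch the fibre dimension to be constantly $\dim A - \dim Y$ on a neighbourhood $U$ of $z$, after which Theorem~\ref{remop} applied to $f|_U\colon U \to Y$ gives openness. Two minor bookkeeping points worth keeping in mind if you wrote this out in full: first, discarding the components of $A$ not through $z$ need not leave a pure-dimensional germ (there could be several components of different dimensions through $z$), so one should rather restrict to the union of the top-dimensional components through $z$ and argue the hypothesis forces this to be the whole local picture; second, when passing from the local fibre dimensions on $U$ to the global ones needed in Theorem~\ref{remop}, it is exactly as you say — the global dimension of a fibre of $f|_U$ is the sup of the local dimensions, each of which has just been shown to equal $\dim A - \dim Y$, so nothing can jump up. With those caveats your argument is sound and complete.
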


\section{Exponential-Algebraic Closedness}\label{eacsec}

In this section we introduce the exponential-algebraic closedness problem, and the special case that we will deal with in this paper.

Before we begin, let us state that we will always assume our algebraic varieties to be irreducible. Recall moreover that by a \textit{$K$-linear subspace} of $\mathbb{C}^n$, for $K \subseteq \mathbb{C}$ a subfield, we mean a subspace of $\mathbb{C}^n$ that is defined over $K$.

The problem is about finding solutions to systems of equations in exponentials and polynomials, and is interpreted geometrically. More precisely, it is about finding sufficient conditions for an algebraic subvariety of $\mathbb{C} \times (\ctimes)^n$ to intersect the graph of the exponential function. By an algebraic subvariety of $\mathbb{C}^n \times (\ctimes)^n$ we mean a subset of $\mathbb{C}^n \times (\ctimes)^n$ that is defined by finitely many polynomial equations for polynomials in the ring $\mathbb{C}[z_1,\dots,z_n,w_1^{\pm 1},\dots,w_n^{\pm 1}]$. 

The conditions conjectured to be sufficient by Zilber for such a $V$ to intersect the graph of $\exp$ are the following.

\begin{defn}\label{freerotund}
	Suppose $V \subseteq \mathbb{C}^n \times (\ctimes)^n$ is an algebraic subvariety. Let $\pi_1$ and $\pi_2$ be the projections from $\mathbb{C}^n \times (\ctimes)^n$ onto $\mathbb{C}^n$ and $(\ctimes)^n$ respectively. 
	
	We say that $V$ is \textit{additively free} if $\pi_1(V)$ is not contained in a translate of a $\mathbb{Q}$-linear subspace of $\mathbb{C}^n$, that it is \textit{multiplicatively free} if $\pi_2(V)$ is not contained in a translate of an algebraic subgroup of $(\ctimes)^n$, and that it is \textbf{free} if it is both additively and multiplicatively free.
	
	Let $L \leq \mathbb{C}^n$ be a $\mathbb{Q}$-linear subspace of $\mathbb{C}^n$; let $$\pi_L:\mathbb{C}^n \times (\ctimes)^n \twoheadrightarrow \mathbb{C}^n/L \times (\ctimes)^n/(\exp(L))$$ be the algebraic quotient map. We say that $V$ is \textit{rotund} if for every $L$, $\dim (\pi_L(V)) \geq n - \dim L$.
\end{defn}

The conjecture then reads as follows.

\begin{conj}[Zilber, \cite{Zil05}]\label{eac}
	Suppose $V \subseteq \mathbb{C}^n \times (\ctimes)^n$ is an algebraic variety; let $\Gamma_\exp \subseteq \mathbb{C}^n \times (\ctimes)^n$ denote the graph of (the $n$-th cartesian power of) the exponential function. If $V$ is free and rotund, then $V \cap \Gamma_\exp \neq \varnothing$.
\end{conj}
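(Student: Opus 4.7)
The plan is to try to bootstrap from the product case established as the main theorem of the paper: given a free and rotund $V \subseteq \mathbb{C}^n \times (\ctimes)^n$, first reduce to $\dim V = n$ by intersecting with generic hyperplanes, checking that both freeness and multiplicative freeness persist and that for each $\mathbb{Q}$-linear $L$ the inequality $\dim \pi_L(V) \geq n - \dim L$ continues to hold (this is a transversality statement about the projections $\pi_L$, generic enough to survive slicing by a hyperplane not aligned with any relevant $L$).

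The core idea is to realize $V$ as the generic member of a flat family $\{V_t\}_{t \in \mathbb{C}}$ whose central fibre $V_0$ is a product $L \times W$ of the kind handled by the main theorem. The natural construction is a Gröbner/initial degeneration along a ray $w$ of the tropical variety $\trop(V) \subseteq \mathbb{R}^n \times \mathbb{R}^n$ chosen to separate the additive coordinates from the multiplicative ones: under sufficient genericity, $\ini_w V$ decomposes as $L \times W'$, where $L$ is the linear part of $\pi_1(V)$ at a smooth point and $W'$ is an initial degeneration of $\pi_2(V)$. One then needs to verify that $V_0$ inherits additive freeness and rotundity from $V$, which should follow from the tropical interpretation of the rotundity inequalities (via mixed volumes, in the spirit of Section \ref{comptro}).

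Having obtained a solution $p_0 \in V_0 \cap \Gamma_\exp$ by the product case, the plan is to lift it along the family. Naive implicit function arguments are not available because $\Gamma_\exp$ is not algebraic, so instead one homotopes $p_0$ through the family $V_t$ using the Open Mapping Theorem (Theorem \ref{remop}) to guarantee local surjectivity of $\pi_1$ on the analytic set $V_t \cap \Gamma_\exp$, and the Proper Mapping Theorem (Theorem \ref{remprop}) together with Kazarnovskii-style amoeba estimates to rule out escape to infinity. Rotundity at each intermediate $t$ is what should force the amoeba of the intersection to remain bounded in a compact region of $\mathbb{R}^n$.

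The main obstacle — and the reason Conjecture \ref{eac} is open in general — is that $\exp(\pi_1(V))$ is not a coset of an analytic subgroup once $\pi_1(V)$ is genuinely non-linear, so the amoeba and tropical-geometric machinery that behaves so cleanly for $\exp(L)$ loses its group-theoretic underpinnings. Equivalently, the initial-degeneration strategy must track not only the ideal of $V$ but also the transcendental relationship between its $\mathbb{C}^n$ and $(\ctimes)^n$ coordinates, and maintaining this compatibility along the degeneration seems to require genuinely new input, plausibly an effective Ax--Schanuel or Khovanskii-type bound on the number of isolated solutions over the family. Without such an input, the strategy above only reproduces the product case.
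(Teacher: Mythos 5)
You have been asked about Conjecture \ref{eac}, which the paper does not prove: it is the open Exponential-Algebraic Closedness Conjecture, of which the paper establishes only the special case $V=L\times W$ with $L$ linear (Theorem \ref{chap3main}). Your proposal is therefore not a proof, and to your credit you say so explicitly in your final paragraph (``the strategy above only reproduces the product case''). Since the task is to assess whether the argument closes the statement, the verdict is that it does not, and the gap is not a repairable detail but the substance of the conjecture itself.

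Concretely, two steps of your outline would fail. First, the degeneration step: the initial-degeneration and tropicalization machinery used in the paper lives entirely on the multiplicative side, i.e.\ on subvarieties of $(\ctimes)^n$; there is no toric degeneration of the additive factor, and ``the linear part of $\pi_1(V)$ at a smooth point'' is a tangent-space approximation, not the special fibre of a flat family containing $V$. Even if one manufactured a family $\{V_t\}$ with $V_0=L\times W$, additive freeness and rotundity of $V_0$ do not follow from those of $V$: initial data of rotund systems can fail to be rotund (this is exactly the phenomenon the paper has to fight in Lemma \ref{shortclass} and Theorem \ref{complextropical}, where a rotund initial variety must be carefully selected and need not exist for an arbitrary face). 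Second, and more fundamentally, the lifting step: because $\Gamma_\exp$ is not algebraic, a point of $V_0\cap\Gamma_\exp$ carries no properness or flatness information about $V_t\cap\Gamma_\exp$ for $t\neq 0$; the Open Mapping Theorem only gives local surjectivity where the fibre-dimension hypotheses already hold, and rotundity is a purely dimension-theoretic condition that does not bound anything --- it does not keep approximate solutions from escaping to infinity along the family. Controlling that escape is precisely what forced Zilber to assume CIT and the Uniform Schanuel Conjecture in Theorem \ref{zilrtp}, and what the present paper circumvents only by exploiting the group-theoretic structure of $\exp(L)$ (density in $\exp(L)\cdot\mathbb{S}_1^n$ in the real case, invariance of initial varieties under $\mathbb{T}_\tau$ and tropical compactifications in the complex case). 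For a non-product $V$ the set $\exp(\pi_1(V))$ has no such structure, as you yourself observe, so the amoeba/tropical comparison at infinity that drives Sections \ref{rtpsec}--\ref{comprtp} has no analogue, and your strategy has no mechanism to replace it.
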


Some partial results concerning this conjecture can be found in \cite{AKM}, \cite{BM}, \cite{DFT21}, \cite{K19}, \cite{MZ}, \cite{Mar} (some of these were already cited in the Introduction).

We are particularly interested in the results of Zilber in \cite{Zil02} and \cite{Zil11}; here, and in the rest of the paper, by an algebraic subvariety of $(\ctimes)^n$ we mean a subset of $(\ctimes)^n$ that is defined by finitely many polynomial equations for polynomials in $\mathbb{C}[w_1^{\pm1},\dots,w_n^{\pm 1}]$. 

\begin{thm}[Zilber]\label{zilrtp}
	Suppose $L \times W$ is a free rotund variety, where $W \subseteq (\ctimes)^n$ is an algebraic subvariety and and $L$ is either:
	
	\begin{itemize}
		\item[1.] If the Conjecture on Intersection with Tori and the Uniform Schanuel Conjecture hold, any $\mathbb{R}$-linear space;
		\item[2.] A $K$-linear space, for a subfield $K$ of the reals that is generated by a tuple that is exponentially-algebraically independent in the sense of \cite{BKW}.
	\end{itemize}
	
	Then $L \times W \cap \Gamma_\exp \neq \varnothing$.
\end{thm}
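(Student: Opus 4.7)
The plan is to reduce the intersection problem to a system of exponential sums equations, solve approximating systems with rational exponents by algebraic means, and then extract a limit of the approximate solutions using the transcendence hypotheses.

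First, fix a basis $v_1, \dots, v_d$ of $L$ with coordinates in $\mathbb{R}$ (case 1) or in $K$ (case 2), and let $B$ be the $n \times d$ matrix whose columns are these basis vectors. Finding a point in $L \times W \cap \Gamma_\exp$ is then equivalent to finding $t \in \mathbb{C}^d$ with $\exp(Bt) \in W$; substituting $w_j = \exp((Bt)_j)$ into the Laurent polynomials defining $W$ produces a system of exponential sums equations in $t$ whose exponents are $\mathbb{Z}$-linear combinations of the entries of $B$.

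Next, approximate $B$ by matrices $B_k$ with rational entries converging to $B$. For each $k$, the column span $L_k := B_k \mathbb{C}^d$ is a $\mathbb{Q}$-linear subspace of $\mathbb{C}^n$, so $T_k := \exp(L_k)$ is an algebraic subgroup of $(\ctimes)^n$, and the $k$-th approximate system reduces to the purely algebraic problem of finding a point in $T_k \cap W$. Choosing the $B_k$ generically, rotundity of $L \times W$ transfers to $L_k \times W$, so $\dim T_k + \dim W \geq n$ and $T_k \cap W$ is nonempty by standard toric intersection theory (or by the theorem of Khovanskii applied to the associated real system). To ensure that $T_k \cap W$ is not itself contained in a proper subtorus of $T_k$, in which case the approximation scheme would collapse, one invokes the Conjecture on Intersection with Tori in case 1 to rule out atypical intersections uniformly in $k$; in case 2 the exponential-algebraic independence of the generators of $K$ plays the same role, since an atypical configuration would impose an algebraic-exponential relation between those generators violating independence. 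This yields $w_k \in T_k \cap W$, and after picking branches of the logarithm, corresponding $t_k \in \mathbb{C}^d$ with $\exp(B_k t_k) = w_k$.

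The main obstacle is the convergence step: a priori the $t_k$ could run off to infinity, even after being shifted by elements of the kernel $(2\pi i \mathbb{Z})^d$, and nothing in the construction so far prevents this. This is precisely where the transcendence hypotheses become decisive. In case 1, the Uniform Schanuel Conjecture furnishes uniform transcendence-theoretic bounds which, together with the dimension theory of $L \times W$, force the $t_k$ (after a suitable branch choice) to stay in a bounded region; in case 2, the exponential-algebraic independence of $K$ plays the analogous role by directly obstructing escape configurations. Extracting a convergent subsequence $t_k \to t^*$ and using continuity of $\exp$, convergence $B_k \to B$, and closedness of $W$ in $(\ctimes)^n$, one obtains $\exp(Bt^*) = \lim \exp(B_k t_k) \in W$, giving the required intersection point.
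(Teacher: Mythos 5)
This theorem is cited by the paper from Zilber's earlier work (\autocite[Theorem~5]{Zil02} for case~1, \autocite[Theorem~7.2]{Zil11} for case~2), not reproved; the paper only gives a one-sentence description of Zilber's strategy (``use a theorem of Khovanskii to find solutions to almost all systems and then use some transcendence statement to make sure that the approximate solutions\ldots converge''). Your high-level plan --- approximate the real or $K$-defined exponents by rational ones, solve the resulting algebraic systems, then use CIT/USC or exponential-algebraic independence to force the approximate solutions to converge --- matches that description in outline.

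However, there is a concrete gap in the middle step. You claim that after replacing $L$ by a nearby rational subspace $L_k$ and setting $T_k = \exp(L_k)$, the dimension inequality $\dim T_k + \dim W \geq n$ (coming from rotundity) already yields $T_k \cap W \neq \varnothing$ ``by standard toric intersection theory.'' That implication is false. Rotundity guarantees, via the $Q = L_k$ quotient, that $W \cdot T_k$ has full dimension, i.e.\ is Zariski-dense in $(\ctimes)^n$; a constructible dense set need not contain the identity, so $W \cap T_k$ can perfectly well be empty. (This is exactly why Exponential-Algebraic Closedness is nontrivial, and why additive freeness excludes rational $L$ in the first place: for $L$ rational, $\exp(L)$ is a subtorus and the claim becomes precisely ``a subtorus of dimension $d$ meets every rotund $W$ of codimension $d$,'' which is not true in general.) The Khovanskii mechanism does not rescue this as stated either: Khovanskii's solvability criterion (Theorem~\ref{nondegrtp} in the paper) requires the system to be \emph{non-degenerate at infinity}, i.e.\ that all shortenings of the system have no solutions, and your proposal never establishes or even addresses this condition. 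Controlling the shortenings --- both for the approximating systems and in the passage to the limit --- is where the transcendence-theoretic input actually enters in Zilber's argument; you invoke CIT, USC, and the BKW independence result only as black boxes that ``rule out atypical intersections'' or ``obstruct escape configurations,'' without tying them to the concrete non-degeneracy/boundedness statements that are needed. Also, in case~2 it is not the independence hypothesis per se that does the work: it is the Bays--Kirby--Wilkie theorem (\autocite[Theorem~1.3]{BKW}), which is an unconditional result whose hypothesis is that independence, and which replaces the pair (CIT, USC) from case~1. As written, the proposal would need the non-degeneracy-at-infinity bookkeeping and the precise transcendence inputs filled in before it constitutes a proof.
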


The assumptions in Theorem \ref{zilrtp} should be explained. The theorem was first proved, under the assumption (1), in \autocite[Theorem~5]{Zil02}: the \textit{Conjecture on Intersections with Tori} is now studied in its own right as one of the main open problems in Diophantine geometry. It is a case of what is now known as the \textit{Zilber-Pink Conjecture} which is stated in the more general setting of Shimura varieties.

The theorem under the assumption (2) was proved in \autocite[Theorem~7.2]{Zil11}: there, Zilber used a result of transcendence for real powers established by Bays, Kirby and Wilkie in \autocite[Theorem~1.3]{BKW} to get rid of the Conjecture on Intersections with Tori; however, the Bays-Kirby-Wilkie theorem only holds for subfields of the reals which are generated over $\mathbb{Q}$ by a finite tuple of numbers which satisfies a notion of independence called \textit{exponential transcendence.} It is worth noting that almost all tuples of complex numbers (in a measure-theoretic sense) satisfy this assumptions.

Our goal in this paper is to establish a stronger result than Theorem \ref{zilrtp}, which does not need any additional assumptions on free rotund varieties of the form $L \times W$ in which $L$ is linear. 
We begin by examining two examples.

\begin{exa}\label{example}
	Let $$L_{\sqrt{2}}:=\left\{ \left(z_1,z_2 \right) \mid z_2=\sqrt{2}z_1 \right\},$$ $$L_i:=\left\{ \left(z_1,z_2 \right) \mid z_2=iz_1 \right\}$$ and $$W:=\{(w_1,w_2) \in (\ctimes)^2 \mid w_1+w_2+1=0 \}.$$ The varieties $L_{\sqrt{2}} \times W$ and $L_i \times W$ will serve as our recurring examples for this paper.
	
	It is clear that both varieties are free: $L_{\sqrt{2}}$ and $L_i$ are not contained in $\mathbb{Q}$-linear subspaces of $\mathbb{C}^2$, and $W$ is not contained in an algebraic subgroup of $(\ctimes)^2$. They are also both rotund (in fact, for subvarieties $V \subseteq \mathbb{C}^2 \times (\ctimes)^2$, freeness implies rotundity: if $\pi_1(V)$ is not contained in any 1-dimensional $\mathbb{Q}$-linear subspace $L \leq \mathbb{C}^2$, then $\dim \pi_L(V) \geq 1$).
	
	Suppose we want to find an intersection between $L_{\sqrt{2}} \times W$ and $\Gamma_\exp$, the graph of the exponential. Then we want to find a point $(z_1,z_2,\exp(z_1),\exp(z_2))$ in $L \times W$, which thus has to satisfy $z_2=\sqrt{2}z_1$ and $\exp(z_1)+\exp(z_2)=1$; this is equivalent to finding a complex number $z \in \mathbb{C}$ such that $\exp(z)+\exp(\sqrt{2}z)+1=0$. By abusing notation, and writing $w^{\sqrt{2}}$ to mean any determination of $\exp(\sqrt{2} (\log w))$, we can see this as looking for a point $w \in \ctimes$ such that $w+w^{\sqrt 2}+1=0$. 
	
	The same reasoning, of course, can be applied to $L_i \times W$, and thus we can see intersecting that with $\Gamma_\exp$ to be equivalent to solving $w+ w^i+1=0$. 
	
	The two situations are quite different from a geometric perspective, as we now see. For a given number $w \in \ctimes$, all the determinations of $w^{\sqrt{2}}$ form a dense subset of the set $\{z \in \ctimes \mid |z|=|w|^{\sqrt{2}} \}$. In fact, if $w=\rho(\cos \theta + i \sin \theta) \in \ctimes$ for some $\rho \in \mathbb{R}^{>0}$ and $\theta \in [0,2\pi[$ then $$\exp^{-1}(w)=\{x+iy \in \mathbb{C} \mid e^x=\rho \wedge y \in \theta + 2\pi  \mathbb{Z} \}.$$ Since $\sqrt{2} \mathbb{Z}+\mathbb{Z}$ is dense in $\mathbb{R}/\mathbb{Z}$, the set $$\exp(\sqrt{2} \exp^{-1}(w))=\{\exp(\sqrt{2} x+ i \sqrt{2} y) \in \ctimes \mid e^x=\rho  \wedge y \in \theta+2\pi \mathbb{Z} \}=$$ $$=\left\{\rho^{\sqrt{2}}(\cos (\sqrt{2}(  \theta+2k\pi)) + i \sin( \sqrt{2} (\theta+2k\pi))) \in \ctimes  \mid k \in \mathbb{Z} \right\}$$ is dense in $\{z \in \ctimes \mid |z|=|w|^{\sqrt{2}}=\rho^{\sqrt{2}}\}$ as we wanted. We see then that $\exp(L_{\sqrt{2}})$ is dense in $$\exp(L_{\sqrt{2}}) \cdot \mathbb{S}_1^2=\left\{(w_1,w_2) \in (\ctimes)^2 \mid |w_2|=|w_1|^{\sqrt{2}} \right\}$$ where $\mathbb{S}_1$ denotes the unit circle $\{z \in \mathbb{C} \mid |z|=1 \}$ and the operation $|w_1|^{\sqrt{2}}$ is well-defined, not multivalued, as it is a real power of a real number. 
	
	On the other hand, $$\exp(i\exp^{-1}(w))=\{\exp(-y+ix) \in \ctimes \mid e^x=\rho \wedge y \in \theta + 2 \pi \mathbb{Z} \}=$$ $$=\left\{ e^{\theta + 2k\pi}(\cos (\rho)+i\sin(\rho)) \in \ctimes \mid k \in \mathbb{Z} \right\}$$ is an infinite discrete subset of $\ctimes$. Thus $\exp(L_i)$ is a closed, complex analytic subgroup of $(\ctimes)^2$.

    These different behaviours are shown in Figure \ref{determinationsimage}.
\end{exa}

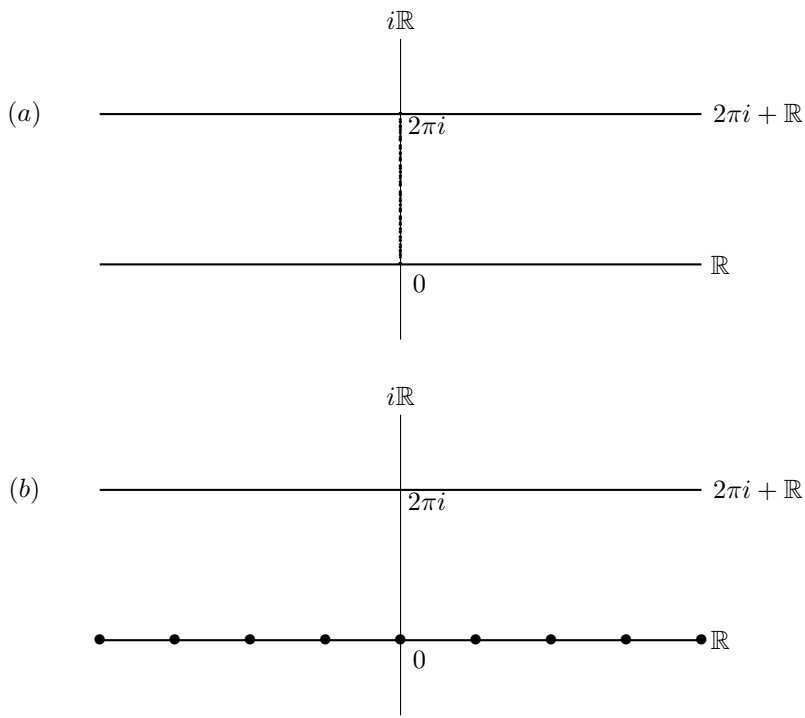
\begin{figure}
	
	\begin{center}
		\begin{tikzpicture}
		
		\node at (-5, 2) {$(a)$};
		\node at (0, 3.25) {$i\mathbb{R}$};
		\node at (0.25,-0.25) {$0$};
		\node at (0.35,1.85) {$2\pi i$};
		\node at (4.25,0) {$\mathbb{R}$};
		\node at (4.75,2) {$2\pi i + \mathbb{R}$};
		
		\draw [thick] (-4,0) -- (4,0);
		\draw (0,-1) -- (0,3);
		
		\draw [thick] (-4,2) -- (4,2);
		
		\foreach \x in {0,...,11} {\node at (0, \x/5.5 ) {$\cdot$};};
		\foreach \x in {0,...,13} {\node at (0, \x/6.5 ) {$\cdot$};};
		\foreach \x in {0,...,17} {\node at (0, \x/8.5 ) {$\cdot$};};
		\foreach \x in {0,...,19} {\node at (0, \x/9.5 ) {$\cdot$};};
		\foreach \x in {0,...,23} {\node at (0, \x/11.5 ) {$\cdot$};};
		
		\node at (-5,-3) {$(b)$};
		\node at (0, -1.75) {$i\mathbb{R}$};
		\node at (0.25,-5.25) {$0$};
		\node at (0.35,-3.15) {$2\pi i$};
		\node at (4.25,-5) {$\mathbb{R}$};
		\node at (4.75,-3) {$2\pi i + \mathbb{R}$};

		\draw [thick] (-4,-3) -- (4,-3);
		\draw (0,-2) -- (0,-6);
		
		\draw [thick] (-4,-5) -- (4,-5);
		
		\foreach \x in {-4,...,4} {\node at (\x,-5){\textbullet};};
		
		\end{tikzpicture}
	\end{center}
	
	\caption{The exponential map identifies the multiplicative group $\ctimes$ with the strip $\{z \in \mathbb{C} \mid 0 \leq \Im(z) \leq 2\pi \}$, glued with itself joining the outer lines: in other words, the multiplicative group is a cylinder. With this interpretation, the sets of all determinations of $1^{\sqrt{2}}$ and $1^i$ are shown in the figure: in $(a)$ we see the dense sets of points obtained as $(2\pi i \mathbb{Z})\cdot \sqrt{2} + \mathbb{Z}$, in $(b)$ the discrete set $-2\pi \mathbb{Z}$.}\label{determinationsimage}
\end{figure}

An important ingredient in our proofs will be a result by Kirby which combines the Ax-Schanuel theorem (a result on atypical intersections of algebraic subvarieties of $\mathbb{C}^n \times (\ctimes)^n$ with the graph of $\exp$) with the Remmert open mapping theorem from complex analysis. We are going to apply it to the following function.

\begin{defn}[$\delta$-map of a variety]
	Let $V$ be an algebraic subvariety of $\mathbb{C}^n \times (\ctimes)^n$. The \textit{$\delta$-map of $V$} is the function $$\delta: V \rightarrow (\ctimes)^n$$ which maps $(v_1,v_2) \in V$ to $\frac{v_2}{\exp(v_1)}$. 
\end{defn}

The following fact was established by Kirby in \autocite[Proposition~6.2 and Remark~6.3]{K19}.

\begin{fact}\label{deltafact}
	Suppose the variety $V \subseteq \mathbb{C}^n \times (\ctimes)^n$ is rotund. Then there is a Zariski-open dense subset $V^\circ \subseteq V$ such that the $\delta$-map of $V$ is open on $V^\circ$.
\end{fact}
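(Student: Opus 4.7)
The plan is to apply Corollary \ref{remco}: it suffices to exhibit a Zariski-open dense subset $V^\circ \subseteq V$ on which every fibre of $\delta$ has the minimal possible dimension $\dim V - n$. Note that taking $L = 0$ in the rotundity hypothesis forces $\dim V \geq n$, so this target is non-negative. The first step is to describe the ``excess'' locus algebraically. At a smooth point $(z, w) \in V$, although $\delta$ itself is transcendental, its differential has kernel
$$K_{(z,w)} = \{(a, b) \in \mathbb{C}^{2n} : b_j = w_j a_j \text{ for all } j\},$$
which is cut out by \emph{polynomial} equations in the ambient coordinates. The tangent space to the fibre of $\delta|_V$ through $(z, w)$ is therefore $T_{(z,w)} V \cap K_{(z,w)}$, an intersection of two algebraic linear subspaces whose dimension is Zariski upper-semicontinuous in the base point. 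Hence
$$V^\circ := \{v \in V_{\mathrm{sm}} : \dim(T_v V \cap K_v) = \dim V - n\}$$
is a Zariski-open subset of $V$, and the real work is to show it is nonempty.

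Suppose for contradiction that $V^\circ = \varnothing$. Then at every smooth point the fibre of $\delta|_V$ exceeds the expected dimension, so in the notation of Definition \ref{fiberdim} one gets $\dim \delta|_V < n$. Proposition \ref{holoimage} then places $\delta(V)$ in a countable union of analytic subsets of $(\ctimes)^n$ of dimension less than $n$, so $\dim \delta(V) < n$ and the generic fibre of $\delta$ has dimension $\dim V - \dim \delta(V) > \dim V - n$. For $t$ in a dense subset of $\delta(V)$, identify $\delta^{-1}(t)$ with $V_t \cap \Gamma_\exp$ via the algebraic automorphism $\phi_t : (z, w) \mapsto (z, w/t)$ of $\mathbb{C}^n \times (\ctimes)^n$, where $V_t := \phi_t(V)$ has the same dimension as $V$ and, since $\phi_t$ descends to an automorphism of $\mathbb{C}^n/L \times (\ctimes)^n/\exp(L)$ for every $\mathbb{Q}$-linear $L$, satisfies $\dim \pi_L(V_t) = \dim \pi_L(V)$.

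The contradiction is now extracted from the \emph{Ax--Schanuel theorem} for the complex exponential in its geometric form: if an irreducible analytic component $U$ of $V_t \cap \Gamma_\exp$ satisfies $\dim U > \dim V_t - n$, then there is a proper $\mathbb{Q}$-linear subspace $L \lneq \mathbb{C}^n$ such that $\pi_1(U)$ is contained in a translate of $L$ and $\dim \pi_L(V_t) < n - \dim L$. Transferring via $\phi_t$ gives $\dim \pi_L(V) < n - \dim L$, directly contradicting the rotundity of $V$. Therefore $V^\circ$ is Zariski-open and dense, and Corollary \ref{remco} yields the openness of $\delta$ on $V^\circ$.

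The main obstacle is the invocation of Ax--Schanuel in precisely the right shape: one needs not the bare transcendence statement but the geometric ``atypical intersection'' version, which converts a dimension-excess analytic component of $V \cap \Gamma_\exp$ into a concrete proper $\mathbb{Q}$-linear subspace witnessing a failure of rotundity. Once this is available, the remaining work is routine: a Jacobian-rank computation to see that the excess locus is Zariski-closed, and an application of Corollary \ref{remco} to upgrade pointwise fibre-dimension control to openness of the map on $V^\circ$.
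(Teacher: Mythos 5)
The paper does not prove this fact; it is cited verbatim from Kirby (\autocite[Proposition~6.2 and Remark~6.3]{K19}), so there is no in-paper proof to compare against. Your reconstruction follows the natural (and, as far as one can tell, Kirby's) strategy: describe the good locus algebraically via the rank of $d\delta$, note that $K_v=\{(a,b): b_j=w_j a_j\}$ is cut out by polynomials so that $V^\circ=\{v\in V_{\mathrm{sm}}:\dim(T_vV\cap K_v)=\dim V-n\}$ is Zariski-open, reduce to nonemptiness, use the translation automorphism $\phi_t$ to identify fibres of $\delta$ with components of $V_t\cap\Gamma_\exp$, and bring in Ax--Schanuel. Those parts are all sound.

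The gap is in the stated consequence of Ax--Schanuel. The geometric Ax--Schanuel theorem says that an atypical component $U$ of $V_t\cap\Gamma_\exp$ (one with $\dim U>\dim V_t-n$) is contained in a coset of $L\times\exp(L)$ for some \emph{proper} $\mathbb{Q}$-linear $L$; it does \emph{not} say that $\dim\pi_L(V_t)<n-\dim L$. Taking $L$ minimal and applying Ax--Schanuel inside the weakly special subvariety, what one actually extracts is a lower bound on the dimension of the \emph{one} fibre of $\pi_L|_{V_t}$ that contains $U$ (namely $\dim F\geq\dim L+\dim U$). But fibre dimension is upper semicontinuous: a single exceptionally large fibre does not bound the generic fibre, and it is the generic fibre that controls $\dim\pi_L(V_t)$. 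So the rotundity contradiction does not follow from one application of the theorem to one $U$. The missing step is to use that the large $\delta$-fibres foliate a dense open subset of $V$, that each leaf lies (by Ax--Schanuel) in a coset of one of the countably many $\mathbb{Q}$-linear subspaces, and then a Baire category or pigeonhole argument to fix a single $L$ that is subordinate to a non-meagre family of leaves; only then is the generic fibre of $\pi_L|_V$ forced to be too large, contradicting rotundity. You gesture at this by letting $t$ range over a dense set, but the argument as written collapses back to a single $t$ and a single $U$, and at that point the claimed inequality on $\dim\pi_L(V_t)$ is not justified.
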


For varieties of the form $L \times W$, actually, something stronger holds and we can say more about the structure of the set $V^\circ$.

\begin{prop}\label{deltastru}
	Suppose $L \times W$ is a rotund algebraic subvariety of $\mathbb{C}^n \times (\ctimes)^n$, with $L$ a linear subspace of $\mathbb{C}^n$ and $W$ an algebraic subvariety of $(\ctimes)^n$. Then there is a Zariski-open dense subset $W^\circ \subseteq W$ such that the $\delta$-map of $L \times W$ is open on $L \times W^\circ$.
\end{prop}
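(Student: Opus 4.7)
The plan is to exploit the fact that $L$ acts on $L \times W$ by translation in the first coordinate, and this action is equivariant with $\delta$ up to a biholomorphism of the target. More precisely, for $a \in L$ let $T_a : L \times W \to L \times W$ be the translation $(l,w) \mapsto (l+a, w)$, and for $c \in (\ctimes)^n$ let $M_c : (\ctimes)^n \to (\ctimes)^n$ be multiplication by $c$. A direct computation gives
$$\delta \circ T_a = M_{\exp(-a)} \circ \delta.$$
Since $T_a$ is an algebraic automorphism of $L \times W$ and $M_{\exp(-a)}$ is a biholomorphism of $(\ctimes)^n$, if $\delta$ is open on a subset $A \subseteq L \times W$, then $\delta$ is also open on $T_a(A)$ for every $a \in L$: indeed, $\delta|_{T_a(A)} = M_{\exp(-a)} \circ \delta|_A \circ T_{-a}|_{T_a(A)}$ is a composition of open maps.

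First, I would invoke Fact \ref{deltafact} to obtain a Zariski-open dense subset $A \subseteq L \times W$ on which $\delta$ is open. Then I would form the $L$-saturation
$$\tilde{A} := \bigcup_{a \in L} T_a(A),$$
which is $L$-invariant in the first factor by construction, dense because it contains $A$, and still supports the openness of $\delta$ by the equivariance observation above.

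The next step is to verify that $\tilde A$ is Zariski-open in $L \times W$. Its complement is
$$\bigcap_{a \in L} T_{-a}\bigl((L\times W)\setminus A\bigr),$$
and since each $T_a$ is an algebraic automorphism of $L \times W$, each $T_{-a}\bigl((L\times W)\setminus A\bigr)$ is Zariski-closed; so the complement is Zariski-closed as an intersection of Zariski-closed sets. Because $\tilde A$ is $L$-invariant in the first factor, it automatically takes the product form $\tilde A = L \times W^\circ$, where $W^\circ := \{w \in W : (0,w) \in \tilde A\}$. Intersecting $\tilde A$ with the Zariski-closed subvariety $\{0\} \times W$ then identifies $\{0\} \times W^\circ$ with a Zariski-open subset of $\{0\} \times W \cong W$, so $W^\circ$ is Zariski-open in $W$; non-emptiness is inherited from $\tilde A$, and density then follows from irreducibility of $W$.

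The subtle point, which is really the only one where anything could go wrong, is ensuring that the $L$-saturation preserves Zariski-openness: for an arbitrary group action this would fail, but here it succeeds precisely because translations by elements of $L$ act as algebraic automorphisms of $L \times W$ (not merely as biholomorphisms), so they preserve the Zariski topology. Once that observation is in hand, the rest is essentially formal.
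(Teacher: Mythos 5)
Your proof is correct and uses essentially the same idea as the paper: translation by elements of $L$ in the first factor intertwines $\delta$ with multiplication by an element of $(\ctimes)^n$, so local openness of $\delta$ at a point propagates to the entire $L$-slice through that point, and the saturated set therefore splits as $L \times W^\circ$. Your version is in fact somewhat more careful than the paper's, which jumps rather quickly from ``openness at $(l_0,w_0)$ implies openness along $L\times\{w_0\}$'' to the claim that the set from Fact~\ref{deltafact} already has the product form; explicitly passing to the $L$-saturation $\tilde A$ and checking its Zariski-openness (as you do) makes that last step airtight.
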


\begin{proof}
	Suppose $(l_0,w_0) \in L \times W$ is a point around which $\delta$ is open; let $U_L$ be a neighbourhood of $l_0$ and $U_W$ be a neighbourhood of $w_0$ such that $\delta_{|U_L \times U_W}$ is open. Let $l$ be any point in $L$, and $V_L=(l-l_0)+U_L$ a neighbourhood of $l$ that is a translate of $U_L$. Then any open subset $O_V$ of $V_L \times U_W$ is a translate by $((l-l_0),1)$ of an open subset $O_U$ of $U_L \times U_W$. This implies that $\delta(O_V)$ is a translate by $(\exp(l-l_0))$ of $\delta(O_U)$, so an open set.
	
	Therefore if $\delta$ is open around $(l_0,w_0)$, then it is open around any point of $L \times \{w_0\}$. Thus the Zariski-open dense set of Fact \ref{deltafact} must be of the form $L \times W^\circ$ for some Zariski-open dense subset $W^\circ$ of $W$.
\end{proof}

Moreover, we have that openness of the $\delta$-map at a single point is sufficient to prove rotundity.

\begin{prop}\label{openrot}
	Let $L \times W$ be an algebraic subvariety of $\mathbb{C}^n \times (\ctimes)^n$, with $L$ a linear subspace of $\mathbb{C}^n$ and $W$ an algebraic subvariety of $(\ctimes)^n$.
	
	If there is a point at which the $\delta$-map of $L \times W$ is open, then the variety is rotund.
\end{prop}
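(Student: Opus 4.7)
The plan is to exploit openness of $\delta$ at a single point by relating, for each $\mathbb{Q}$-linear subspace $M\leq\mathbb{C}^n$, the image of $\delta$ to the image of $\pi_M$ via a factorization diagram.

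I would first fix an arbitrary $\mathbb{Q}$-linear subspace $M\leq\mathbb{C}^n$; the goal is then to show $\dim\pi_M(L\times W)\geq n-\dim M$. Let $q:(\ctimes)^n\twoheadrightarrow(\ctimes)^n/\exp(M)$ be the algebraic quotient of the multiplicative group, whose target has complex dimension $n-\dim M$. The key observation is that $q\circ\delta: L\times W\to(\ctimes)^n/\exp(M)$ descends through $\pi_M$: for $l\in L$, $w\in W$, $m\in M$ and $t\in\exp(M)$,
\[
\delta(l+m,wt)=wt\exp(-l-m)=w\exp(-l)\cdot t\exp(-m),
\]
and $t\exp(-m)\in\exp(M)$, so $\delta(l+m,wt)$ and $\delta(l,w)$ lie in the same coset of $\exp(M)$. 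Hence $q\circ\delta$ factors as $\bar\delta\circ\pi_M$, where $\bar\delta:\pi_M(L\times W)\to(\ctimes)^n/\exp(M)$ is a well-defined holomorphic map (obtained by post-composing $\pi_M$ with projection to its second factor if needed). By Proposition~\ref{holoimage}, the image of $\bar\delta$ has complex dimension at most $\dim\pi_M(L\times W)$.

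Next I would apply the hypothesis: if $\delta$ is open at a point $p$, then some neighborhood $U$ of $p$ in $L\times W$ satisfies that $\delta(U)$ is open in $(\ctimes)^n$, so $q(\delta(U))$ is open in $(\ctimes)^n/\exp(M)$, hence of complex dimension $n-\dim M$. By the commutative diagram $q(\delta(U))=\bar\delta(\pi_M(U))\subseteq\bar\delta(\pi_M(L\times W))$, and therefore
\[
\dim\pi_M(L\times W)\ \geq\ \dim\bar\delta(\pi_M(L\times W))\ \geq\ n-\dim M.
\]
As $M$ was arbitrary, $L\times W$ is rotund.

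I do not anticipate any serious obstacle: the core content is the commutativity $q\circ\delta=\bar\delta\circ\pi_M$, which simply reflects the fact that the exponentials of elements of $M$ lie in the subgroup $\exp(M)$ being quotiented. The dimension comparison then follows from Proposition~\ref{holoimage} together with the elementary observation that an open subset of a complex manifold cannot be contained in a countable union of strictly lower-dimensional analytic subsets.
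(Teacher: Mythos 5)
Your proof is correct and takes essentially the same approach as the paper's: both exploit the commutativity $q\circ\delta = \bar\delta\circ\pi_M$ (the paper phrases this as a commuting square involving the $\delta$-map $\delta_Q$ of the quotient variety) and then deduce a dimension bound on $\pi_M(L\times W)$ from the openness of $\delta$ at a point. The only cosmetic difference is that the paper observes directly that an open subset of the domain maps onto an open subset of $(\ctimes)^n/\exp(Q)$ (forcing $\dim\pi_Q(L)\times\pi_{\exp(Q)}(W)\geq n-\dim Q$), whereas you invoke Proposition~\ref{holoimage} together with the Baire-type observation that an open set cannot be covered by countably many lower-dimensional analytic sets; these are equivalent.
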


\begin{proof}
	Suppose $Q$ is a linear subspace of $\mathbb{C}^n$. Let $\pi_Q:\mathbb{C}^n \twoheadrightarrow \mathbb{C}^n/Q$ and $\pi_{\exp(Q)}:(\ctimes)^n \twoheadrightarrow (\mathbb{C})^\times/\exp(Q)$ denote the projections; $\delta$ the $\delta$-map of $L \times W$; $\delta_Q$ the $\delta$-map of $\pi_Q(L) \times \pi_{\exp(Q)}(W)$. 
	
	Let $(l,w) \in L \times W$. We see that $$\delta_Q(\pi_Q(l), \pi_{\exp(Q)}(w))=\frac{\pi_{\exp(Q)}(w)}{\exp(\pi_Q(l))}=$$ $$=\frac{w \cdot \exp(Q)}{\exp(l) \cdot \exp(Q)}=\frac{w}{\exp(l)} \cdot \exp(Q)=\pi_{\exp(Q)}(\delta(l,w))$$ and therefore the square 
	
	$$\begin{tikzpicture}
	\node (A) at (0,2) {$L \times W$};
	\node (B) at (4,2) {$(\ctimes)^n$};
	\node (C) at (4,0) {$(\ctimes)^n/\exp(Q)$};
	\node (D) at (0,0) {$\pi_Q(L) \times \pi_{\exp(Q)}(W)$};
	
	\draw[->] (A) -- (B) node[midway, above]{$\delta$};
	\draw[->] (B) -- (C) node[midway, right]{$\pi_{\exp(Q)}$};
	\draw[->] (A) -- (D) node[midway, right]{$\pi_Q \times \pi_{\exp(Q)}$};
	\draw[->] (D) -- (C) node[midway, above]{$\delta_Q$};
	\end{tikzpicture}$$
	
	commutes.
	
	Now let $(l_0,w_0) \in L \times W$ be the point at which $\delta$ is open, so that there are neighbourhoods $U_L \subseteq L$ of $l_0$ and $U_W \subseteq W$ of $w_0$ such that $\delta_{|U_L \times U_W}$ is an open map. Then we see that $$\delta_Q(\pi_Q(U_L) \times \pi_{\exp(Q)}(U_W))=\pi_{\exp(Q)}(\delta(U_L \times U_W))$$ and the set on the right-hand side is the projection of an open set, and thus it has to be open. Thus the set on the left-hand side is open too, and that is only possible if $\dim L + \dim W \geq n - \dim Q$ as we wanted.
\end{proof}

Using this fact together with a common procedure known as the Rabinowitsch trick we can make a very useful reduction, proving that we can assume without loss of generality that if $L \times W$ is rotund then the $\delta$-map is open everywhere.

\begin{lem}\label{reduction 1}
	Suppose $L \times W \subseteq \mathbb{C}^n \times (\ctimes)^n$ is an additively free, rotund algebraic subvariety, with $L$ a linear subspace of $\mathbb{C}^n$ and $W$ an algebraic subvariety of $(\ctimes)^n$. Then there is an additively free, rotund subvariety $L' \times W'$ of $\mathbb{C}^{n+1} \times (\ctimes)^{n+1}$ such that the $\delta$-map of $L' \times W'$ is open on all the domain, and if $\exp(L') \cap W' \neq \varnothing$ then $\exp(L) \cap W \neq \varnothing$.
\end{lem}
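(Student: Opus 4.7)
The plan is to perform a Rabinowitsch-style trick, using one extra variable to restrict $W$ to the Zariski-open dense subset $W^\circ \subseteq W$ on which the $\delta$-map of $L \times W$ is open (Proposition \ref{deltastru}). First I would pick a Laurent polynomial $f \in \mathbb{C}[w_1^{\pm 1},\dots,w_n^{\pm 1}]$ that does not vanish on $W$ but does vanish on the proper Zariski-closed subset $W \setminus W^\circ$; such an $f$ exists because $W$ is irreducible. Then set
$$W' := \left\{ (w_1,\dots,w_n,w_{n+1}) \in (\ctimes)^{n+1} \;\middle|\; (w_1,\dots,w_n) \in W,\ w_{n+1}\,f(w_1,\dots,w_n) = 1 \right\}$$
and $L' := L \times \mathbb{C}$. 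Projection onto the first $n$ coordinates identifies $W'$ with the irreducible quasi-projective variety $W \setminus V(f) \subseteq W^\circ$, so $W'$ is an irreducible algebraic subvariety of $(\ctimes)^{n+1}$ and $L'$ is a linear subspace of $\mathbb{C}^{n+1}$.

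Next I would verify the required properties in turn. The intersection condition is immediate: any point of $\exp(L') \cap W'$ projects to a point of $\exp(L) \cap W$. For additive freeness, suppose for contradiction that $L \times \mathbb{C} \subseteq a + Q$ with $Q \subsetneq \mathbb{C}^{n+1}$ a proper $\mathbb{Q}$-linear subspace; varying the last coordinate freely forces $\{0\}^n \times \mathbb{C} \subseteq Q$, hence $Q$ splits as $Q_0 \times \mathbb{C}$ for a proper $\mathbb{Q}$-linear subspace $Q_0 \subsetneq \mathbb{C}^n$, and projecting the inclusion places $L$ in a translate of $Q_0$, contradicting additive freeness of $L$.

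The main step, and the one I expect to be the main technical point, is showing that the $\delta$-map $\delta'$ of $L' \times W'$ is open on all of $L' \times W'$. Under the identification $L' \times W' \cong L \times \mathbb{C} \times (W \setminus V(f))$, the map reads
$$\delta'(l,l_{n+1},w) = \left( \frac{w}{\exp(l)},\ \frac{1}{f(w)\,\exp(l_{n+1})} \right),$$
and I would apply the Open Mapping Theorem (Theorem \ref{remop}) by computing fibre dimensions. The fibre of $\delta'$ over $(a,b) \in (\ctimes)^{n+1}$ consists of triples $(l,l_{n+1},w)$ with $(l,w)$ in the $\delta_{L \times W}$-fibre over $a$ (and necessarily $w \in W \setminus V(f) \subseteq W^\circ$, since $f(w) \neq 0$ is required) and with $l_{n+1}$ determined up to $2\pi i\,\mathbb{Z}$ by the relation $\exp(l_{n+1}) = 1/(b\,f(w))$. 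Since Proposition \ref{deltastru} combined with Theorem \ref{remop} guarantees that the fibres of $\delta_{L \times W}$ inside $L \times W^\circ$ are equidimensional of dimension $\dim L + \dim W - n$, the non-empty fibres of $\delta'$ have that same dimension, which coincides with $\dim(L' \times W') - (n+1)$. Theorem \ref{remop} then yields openness of $\delta'$ everywhere on $L' \times W'$, and rotundity of $L' \times W'$ follows from Proposition \ref{openrot}.
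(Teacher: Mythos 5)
Your proposal is correct and takes essentially the same Rabinowitsch-trick route as the paper: same choice $L' = L\times\mathbb{C}$, and a $W'$ that parametrizes the Zariski-open subset of $W$ where $\delta$ is open (your $w_{n+1}f(w)=1$ versus the paper's $w_{n+1}=F(w)$ are the same device). The only substantive difference is in verifying openness of the $\delta$-map of $L'\times W'$: you compute fibre dimensions and invoke Theorem \ref{remop}, whereas the paper checks directly that the image of a basic neighbourhood $U_{L'}\times U_{W'}$ is a product of an open subset of $(\ctimes)^n$ with an open subset of $\ctimes$, hence open. One small point worth tightening in your fibre-dimension computation: you should apply Theorem \ref{remop} to the restriction of $\delta$ to $L\times(W\setminus V(f))$ (which is open, being the restriction of an open map to an open subset of its domain) rather than to $L\times W^\circ$, since the fibres of $\delta'$ lie over $W\setminus V(f)$; this gives exactly the equidimensionality you need.
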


\begin{proof}
	Since the set of points where $\delta$ is open is of the form $L \times W^\circ$, there is an algebraic function $F:W \rightarrow \mathbb{C}$ such that $\delta$ is open around each point $(l,w)$ for which $F(w) \neq 0$. Let $L':=L \times \mathbb{C}$ and $$W':=\left\{ (w_1,\dots,w_{n+1}) \in (\ctimes)^n \mid (w_1,\dots,w_n) \in W \wedge  F(w_1,\dots,w_n)=w_{n+1} \right\}.$$ 
	
	Consider then the variety $L' \times W'$. It is clear that $L' \times W'$ is additively free. 
	
	For rotundity we use Proposition \ref{openrot}. Given a  point in $L' \times W'$, which therefore has the form $(l_1,\dots,l_{n+1}, w_1,\dots,w_{n+1})$ there are neighbourhoods $U_L \subseteq L$ and $U_W \subseteq W$ of $(l_1,\dots,l_n)$ and $(w_1,\dots,w_n)$ respectively such that the $\delta$-map of $L \times W$ is open on $U_L \times U_W$. Let $U_{L'}:=U_L \times \mathbb{C}$, and $$U_{W'}:=\{(w_1,\dots,w_{n+1}) \in W' \mid (w_1,\dots,w_n) \in U_W \}.$$ The image of $U_{L'} \times U_{W'}$ under the $\delta$-map of $L'\times W'$ is then the Cartesian product of an open subset of $(\ctimes)^n$ by $\mathbb{C}$, i.e$.$ an open subset of $(\ctimes)^{n+1}$: therefore the $\delta$-map of $L' \times W'$ is open at all of its points, and therefore the variety must be rotund.
	
	Finally, it is clear that if $(w_1,\dots,w_{n+1}) \in \exp(L') \cap W'$ then $(w_1,\dots,w_n) \in \exp(L) \cap W$. 
\end{proof}

We introduce another assumption which simplifies the proofs: we may take $\dim L = \codim W$, so that $\dim L \times W=n$ when $L \times W \subseteq \mathbb{C}^n \times (\ctimes)^n$. 

\begin{lem}\label{generichypexp}
	Let $L \times W$ be an additively free, rotund variety in $\mathbb{C}^n \times (\ctimes)^n$. Then there is a space $L'' \subseteq L$ such that $L'' \times W$ is additively free and rotund, and $\dim L'' + \dim W =n$.
\end{lem}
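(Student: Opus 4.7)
My plan is to take $L''$ to be a generic $d$-dimensional subspace of $L$, where $d := n - \dim W$. Rotundity of $L \times W$ applied with the trivial subspace yields $\dim L + \dim W \geq n$, so $d \leq \dim L$ and the Grassmannian $\mathcal{G} := \mathrm{Gr}(d, L)$ is nonempty. I will show that the subset of $\mathcal{G}$ for which $L'' \times W$ is additively free and rotund is the complement of a meager (countable union of proper Zariski-closed) subset of $\mathcal{G}$, hence nonempty and in fact dense.

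For additive freeness I need to check that $L'' \not\subseteq Q$ for every proper $\mathbb{Q}$-linear subspace $Q$ of $\mathbb{C}^n$. Since $L \not\subseteq Q$ by freeness of $L \times W$, the subspace $L \cap Q$ is proper in $L$, and $\{L'' \in \mathcal{G} : L'' \subseteq L \cap Q\}$ is a proper Zariski-closed subvariety of $\mathcal{G}$. There are only countably many proper $\mathbb{Q}$-linear subspaces of $\mathbb{C}^n$, so the union of the corresponding bad sets is meager.

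For rotundity I need $\dim \pi_Q(L'' \times W) \geq n - \dim Q$ for every $\mathbb{Q}$-linear $Q \leq \mathbb{C}^n$. For a generic $L'' \in \mathcal{G}$, the classical Grassmannian intersection formula gives $\dim(L'' \cap Q) = \max(0, d + \dim(L \cap Q) - \dim L)$, and hence $\dim \pi_Q^1(L'') = \min(d, \dim \pi_Q^1(L))$, where $\pi_Q^1$ denotes the additive factor of $\pi_Q$. I then split into two cases. If $\dim \pi_Q^1(L) \leq d$, then $\dim \pi_Q^1(L'') = \dim \pi_Q^1(L)$, so $\dim \pi_Q(L'' \times W) = \dim \pi_Q(L \times W) \geq n - \dim Q$ is inherited directly from rotundity of $L \times W$. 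Otherwise $\dim \pi_Q^1(L'') = d$, and I combine this with the bound $\dim \pi_Q^2(W) \geq \dim W - \dim Q$, which holds because $\pi_Q^2$ quotients $(\ctimes)^n$ by the algebraic subgroup generated by $\exp(Q)$ of dimension $\dim Q$, bounding the fibers of the induced map from $W$ by $\dim Q$; this yields $\dim \pi_Q(L'' \times W) \geq d + \dim W - \dim Q = n - \dim Q$.

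The main obstacle will be coordinating countably many genericity conditions at once, one for each proper $\mathbb{Q}$-linear subspace $Q$, arising both from freeness and from the expected-intersection formula. Each condition cuts out a proper Zariski-closed subset of the irreducible variety $\mathcal{G}$, so their countable union is meager by Baire category and a generic $L'' \in \mathcal{G}$ satisfies them all simultaneously; alternatively, one can cut $L$ down one dimension at a time, at each step avoiding countably many bad codimension-one positions in $L$.
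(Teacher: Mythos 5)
Your proof is correct, but it follows a genuinely different route from the paper's. The paper reuses the $\delta$-map machinery it has already built: by rotundity, the $\delta$-map of $L \times W$ is open at some $(0,w)$, so the fiber component through a logarithm $z$ of $w$, namely a component of $\log W \cap (z+L)$, has the expected dimension $\dim L + \dim W - n$; choosing a single generic subspace $H$ of dimension $2n - \dim L - \dim W$ isolates $z$ in $\log W \cap (z + (L \cap H))$, so the $\delta$-map of $(L \cap H) \times W$ has a zero-dimensional fiber at $(0,w)$, is therefore open there by Corollary \ref{remco}, and the cut variety is rotund by Proposition \ref{openrot}. You instead verify the rotundity inequalities $\dim \pi_Q(L'' \times W) \geq n - \dim Q$ directly, one proper $\mathbb{Q}$-subspace $Q$ at a time, via the Grassmannian genericity formula $\dim \pi_Q^1(L'') = \min(d, \dim \pi_Q^1(L))$ together with the fiber-dimension bound $\dim \pi_Q^2(W) \geq \dim W - \dim Q$, and your two-case split $\dim \pi_Q^1(L) \leq d$ versus $> d$ is exactly right. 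The trade-off: the paper's route collapses the rotundity side to a \emph{single} open dense condition on $H$ (at the cost of invoking the Open Mapping Theorem apparatus), whereas your route is more elementary linear algebra but needs to coordinate countably many genericity conditions; since countably many conditions are already unavoidable for additive freeness in either argument, your Baire-category (or iterated codimension-one cutting) step absorbs this at no real extra cost. Both proofs are sound.
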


\begin{proof}
	Let $\delta$ be the $\delta$-map of $L \times W$; by rotundity, $\delta$ is open around some point $(0,w) \in L \times W$. This implies that there is a point $z \in \log W$, with $\exp(z)=w$, which lies in an irreducible component of the set $\log W \cap z+L$ of dimension $\dim L + \dim W - n$, by Theorem \ref{remop}. For a sufficiently generic subspace $H \leq \mathbb{C}^n$, of dimension $2n-\dim L - \dim W$, we will have then that $$\dim (\log W \cap z+L \cap z+H)=\dim L + \dim W -n + 2n - \dim L -\dim W -n=0$$ and thus $z$ is an isolated point in it. Therefore, the $\delta$-map of $(L \cap H) \times W$ is open at the point $(0,w)$: the variety $(L \cap H) \times W$ is then rotund by Proposition \ref{openrot} and since $H$ is generic we may also take it to be additively free. Therefore $L'':=L \cap H$ satisfies the lemma.
\end{proof}

Thus in what follows we will, when necessary, assume freely that $\dim L =\codim W$. This will not affect the generality of our statements.

\section{Amoebas and Tropical Geometry}\label{amandtrop}

In this section we introduce the basics on amoebas and tropical geometry. In the next sections we will show how this ties to the exponential-algebraic closedness question for varieties of the form $L \times W$. Amoebas will be more important in the case in which $L$ is defined over the reals, as then we will see that finding a point in $\exp(L) \cap W$ is as hard as finding a point in the intersection of the real part of $L$ with the amoeba of $W$. Tropicalizations will play a more important role in the case in which $L$ is not defined over the reals, as then we will need a precise understanding of the behaviour of $W$ as its points approach $0$ or $\infty$, which will be given by tropical geometry. 

\subsection{Amoebas}

Amoebas were introduced in \autocite[Chapter~6]{GKZ} as a tool to analyse the behaviour near 0 and $\infty$ of subvarieties of $(\ctimes)^n$. A good survey on their properties is \cite{M}.

Denote by $\Log: (\ctimes)^n \rightarrow \mathbb{R}^n$ the map $$(z_1,\dots,z_n) \mapsto \left(\log|z_1|, \dots, \log|z_n| \right).$$

\begin{defn}
	Let $W\subseteq (\ctimes)^n$ be an algebraic variety. The \textit{amoeba} $\am$ of $W$ is the image of $W$ under the map $\Log$.
\end{defn}

Figure \ref{amoeba} shows a picture of an amoeba.

An amoeba is a closed proper subset of $\mathbb{R}^n$. Much of the theory of amoebas has been carried out for amoebas of hypersurfaces; however, we will use amoebas of varieties of arbitrary codimension. For now, we only state the following theorem, which will help us later on to establish the tie between amoebas and tropical varieties. 

\begin{thm}[{\autocite[Corollary~5.2]{Pu}}]\label{amoebasatz}
	Let $W \subseteq (\ctimes)^n$ be an algebraic variety, and $I$ be the ideal of polynomials which vanish on $W$. Then $$\am=\bigcap_{f \in I} \mathcal{A}_f$$ where $\mathcal{A}_f$ denotes the amoeba of the hypersurface cut out by $f$.
\end{thm}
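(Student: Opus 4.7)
The easy inclusion $\am \subseteq \bigcap_{f \in I}\mathcal{A}_f$ is immediate: any $w \in W$ has $f(w)=0$ for every $f \in I$, hence $\Log(w) \in \mathcal{A}_f$. The real content is the reverse containment, which I would attack by a compactness-plus-Stone-Weierstrass argument.

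Fix $x \notin \am$, so the real $n$-torus $T_x := \Log^{-1}(x) \cong (S^1)^n$ is compact and disjoint from the closed variety $W$. The goal is to exhibit a single $f \in I$ that is nowhere-vanishing on $T_x$; this will give $x \notin \mathcal{A}_f$ and place $x$ outside $\bigcap_{f \in I} \mathcal{A}_f$. By the definition of $I$, for every $w \in T_x$ some $f_w \in I$ has $f_w(w) \neq 0$, and nonvanishing persists on a neighborhood. Compactness of $T_x$ then extracts finitely many $f_1,\ldots,f_k \in I$ with no common zero on $T_x$.

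The technical heart is combining the $f_j$ into a single element of $I$. The continuous function $\Phi := \sum_j |f_j|^2$ is strictly positive on $T_x$, so $h_j := \overline{f_j}/\Phi$ are continuous functions on $T_x$ with $\sum_j f_j \cdot h_j \equiv 1$ there. I would then observe that the restrictions to $T_x$ of Laurent polynomials form a dense subalgebra of $C(T_x,\mathbb{C})$: they contain constants, Laurent monomials separate the points of $T_x$, and the subalgebra is closed under complex conjugation because on $T_x$ one has $\overline{z_j} = e^{2x_j}/z_j$, itself a Laurent monomial. By the complex Stone-Weierstrass theorem, each $h_j$ can be approximated uniformly on $T_x$ by a Laurent polynomial $\tilde{h}_j$; then $f := \sum_j f_j \tilde{h}_j$ lies in $I$ (as a Laurent-polynomial combination of elements of $I$) and is uniformly close to $1$ on $T_x$, hence has no zero there.

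The step I expect to be the main obstacle is precisely the passage from ``no common zero on $T_x$'' to ``a single $f \in I$ with no zero on $T_x$''. The key insight that unlocks it is that fixing $|z_j|=e^{x_j}$ turns complex conjugation into an algebraic (Laurent-monomial) operation, which is exactly what makes Laurent polynomials dense in $C(T_x,\mathbb{C})$ and lets Stone-Weierstrass upgrade the abstract continuous identity $\sum f_j h_j \equiv 1$ on $T_x$ to an approximate one realised by a single Laurent polynomial in $I$.
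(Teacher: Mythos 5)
Your proof is correct. The easy containment is right; the compactness step correctly uses $W = V(I)$ so that each $w$ on the fibre torus $T_x$ has a witness $f_w \in I$ with $f_w(w) \neq 0$; and the Stone--Weierstrass step is the genuine content. The observation that fixing $|z_j| = e^{x_j}$ turns $\overline{z_j} = e^{2x_j} z_j^{-1}$ into a Laurent monomial -- so that restrictions of Laurent polynomials to $T_x$ form a conjugation-closed, point-separating, unital subalgebra of $C(T_x,\mathbb{C})$ -- is exactly the hinge of the argument, and you identify it clearly. The closing estimate works because the $f_j$ are bounded on the compact $T_x$, so $\lVert f - 1\rVert_{\infty,T_x} \leq \sum_j \lVert f_j\rVert_\infty \lVert \tilde h_j - h_j\rVert_\infty$ can be made less than $1$.

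This is, however, a genuinely different route from the source the paper cites. Purbhoo's Corollary~5.2 falls out of a much more refined, effective theorem: membership in the amoeba is characterised by a ``lopsidedness'' criterion applied to iterated cyclic resultants of the defining polynomials, and the witness $f \in I$ nonvanishing on $T_x$ is produced by explicit resultant manipulations, together with quantitative bounds. Your argument trades that effectivity for brevity: it is a soft, non-constructive existence proof that gets the set-theoretic identity $\mathcal{A}_W = \bigcap_{f\in I}\mathcal{A}_f$ directly from the partition-of-unity trick $\sum_j f_j \overline{f_j}/\Phi \equiv 1$ and Stone--Weierstrass density of trigonometric (equivalently, restricted Laurent) polynomials on the torus. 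For the way the statement is used in this paper -- only the equality of sets matters, in order to reduce arbitrary varieties to hypersurfaces -- your argument is entirely adequate and arguably pitched at the right level of generality.
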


\begin{figure}
	
	\begin{center}
		\begin{tikzpicture}
		\filldraw[fill=gray]
		
		(-4,0) .. controls (0,0) .. (4,4) .. controls (0,0) .. (0,-4) .. controls (0,0) .. (-4,0) -- cycle;
		
		\draw (-4,0) -- (4,0);
		\draw (0,-4) -- (0,4);
		
		\end{tikzpicture}
	\end{center}
	
	\caption{The amoeba of the algebraic variety $W$ defined by $w_1+w_2+1=0$. We see that the amoeba has three ``tentacles'': the diagonal one corresponds to the behaviour of $W$ when $w_1$ and $w_2$ are both very big, and thus their absolute values are roughly the same; the vertical one corresponds to points for which $w_2$ is very close to 0 (and thus its logarithm to $-\infty$) and $w_1$ to $-1$; the horizontal one to points with $w_1$ close to 0 and $w_2$ to $-1$.}\label{amoeba}
\end{figure}
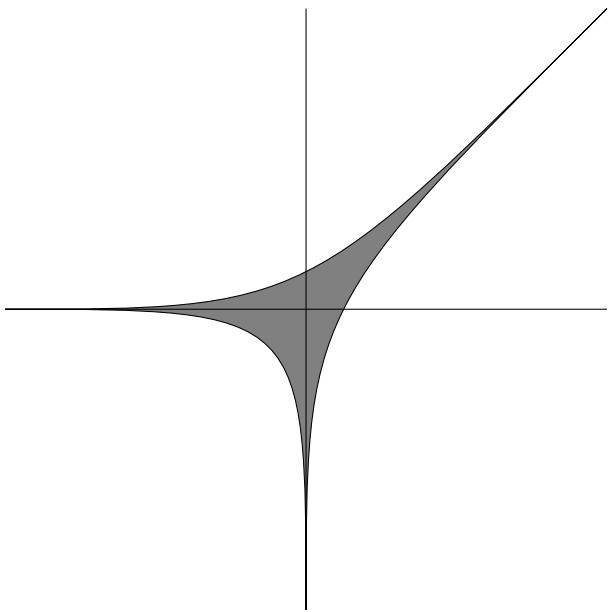

\subsection{Polyhedral Geometry}

In this subsection we review a few basic facts on polyhedral geometry, so that we have all the tools to discuss tropical varieties later on.

We start by recalling some of the definitions, starting with the basic notion of polyhedron.

\begin{defn}\label{polyandface}
	A \textit{polyhedron} is a subset of $\mathbb{R}^n$ of the form $$\left\{  x\in \mathbb{R}^n \mid A x \leq b \right\}$$ where $A$ is a $d \times n$ matrix with real entries and $b \in \mathbb{R}^d$ (the inequality is componentwise). If the matrix has entries in $\mathbb{Q}$ and $b \in \mathbb{Q}^d$ we will say the polyhedron is \textit{rational}.
	
	A bounded polyhedron is called a \textit{polytope.}

    Given a polyhedron $P \subseteq \mathbb{R}^n$, the \textit{affine span} $\aff(P)$ is the smallest affine subspace of $\mathbb{R}^n$ which contains $P$.
	
	A \textit{face} of a polyhedron $P$ with respect to some $w \in \mathbb{R}^n$ is a subset of $P$ of the form $$\face_w(P):=\{x \in P \mid w \cdot x \geq w \cdot y \,\,\, \forall y \in P \}.$$
\end{defn}

We will denote by $\aff(P)$ the affine span of a polyhedron, and by $\relint(P)$ the relative interior of $P$ in $\aff(P)$.

\begin{figure}
	
	\begin{center}
		\begin{tikzpicture}
		
		\draw (-3,0) -- (3,0);
		\draw (0,-3) -- (0,3);
		\draw [thick] (0,0) -- (0,1.5);
		\draw [thick] (0,0) -- (1.5,0);
		\draw [thick] (0,1.5) -- (1.5,0);
		
		\node at (0.4,-0.2) {$(0,0)$};
		\node at (1.5,-0.2) {$(1,0)$};
		\node at (0.4,1.7) {$(0,1)$};

		\end{tikzpicture}
		
		\caption{This triangle is a polyhedron defined by $x_1+x_2 \leq 1$, $x_1 \geq 0$ and $x_2 \geq 0$. The three sides are faces induced by the vectors $(1,1)$, $(-1,0)$ and $(0,-1)$.}\label{triangle}
		
	\end{center}
	
\end{figure}
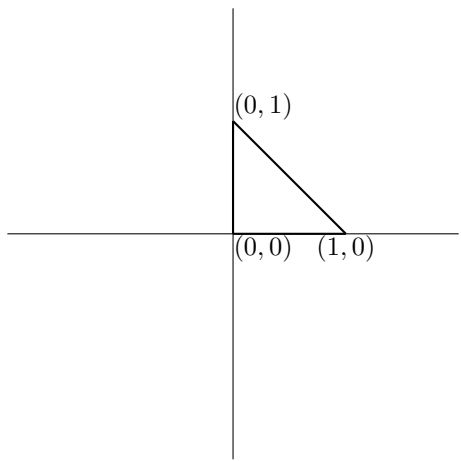

We will deal with sets of polyhedra that enjoy good coherence properties.

\begin{defn}
	A \textit{polyhedral complex} $\Sigma$ is a set of polyhedra such that:
	
	\begin{itemize}
		\item[1.] If $P \in \Sigma$ then every face of $P$ is in $\Sigma$;
		\item[2.] If $P_1,P_2 \in \Sigma$, then $P_1 \cap P_2$ is either empty or a face of both (and thus an element of $\Sigma$).
	\end{itemize}
	
	The union of all polyhedra of $\Sigma$ is called the \textit{support} of $\Sigma$ and denoted $|\Sigma|$. We will often abuse notation and write $x \in \Sigma$ for $x \in \mathbb{R}^n$ to mean that $x \in |\Sigma|$, so there is $P \in \Sigma$ such that $x \in P$; when $P$ is a polyhedron, $P \in \Sigma$ will literally mean that $P$ is one of the polyhedra of $\Sigma$.
\end{defn}

The polyhedra in a polyhedral complex are called the \textit{cells} of the complex; those which are not contained in any larger polyhedra are the \textit{facets} of the complex and the faces of a facet which are not contained in any larger polyhedron (other than the facet itself) are called the \textit{ridges} of the complex. Obviously, by a \textit{rational polyhedral complex} we will mean a polyhedral complex all of whose polyhedra are rational.

The type of polyhedral complex that we will mostly be interested in is the \textit{normal fan} of a polytope.

\begin{defn}
	A \textit{cone} in $\mathbb{R}^n$ is a polyhedron $P$ for which there exist $v_1,\dots,v_k \in \mathbb{R}^n$ such that $$P=\left\{\sum_{i=1}^k \lambda_i v_i \mid \lambda_1,\dots,\lambda_k \in \mathbb{R}_{\geq 0} \right\}.$$ A \textit{fan} is a polyhedral complex $\Sigma$ such that all polyhedra in $\Sigma$ are cones.
\end{defn}

\begin{defn}
	Let $P$ be a polytope. The \textit{normal fan} of $P$ is the polyhedral complex which contains for every face $F$ of $P$ the polyhedron obtained as the closure (in the Euclidean topology on $\mathbb{R}^n$) of the set $$\left\{ w \in \mathbb{R}^n \mid F=\face_w(P) \right\}.$$

	The $(n-1)$-skeleton of the normal fan is the polyhedral complex obtained by removing the polyhedra of dimension $n$ from the normal fan.
\end{defn}

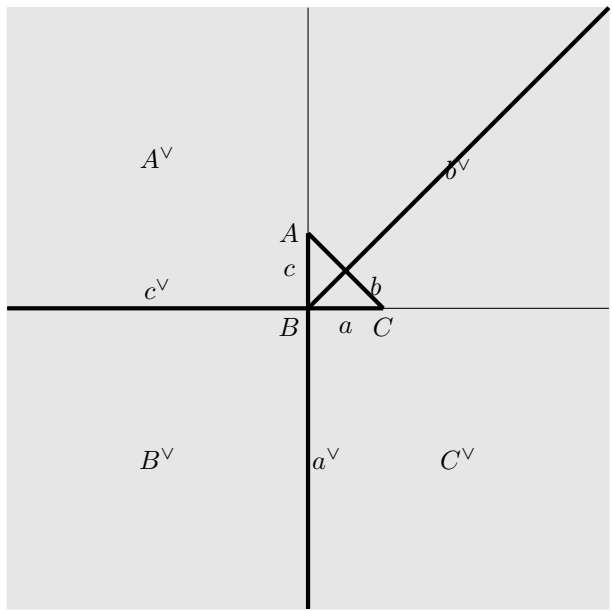
\begin{figure}
	\begin{center}
		
		\begin{tikzpicture}
		
		\filldraw[color=gray!20] (-4,-4) rectangle (4,4);
		\draw (-4,0) -- (4,0);
		\draw (0,-4) -- (0,4);
		\draw [ultra thick] (0,0) -- (4,4);
		\draw [ultra thick] (0,0) -- (-4,0);
		\draw [ultra thick] (0,0) -- (0,-4);
		
		\draw [ultra thick] (0,0) -- (1,0);
		\draw [ultra thick] (0,0) -- (0,1);
		\draw [ultra thick] (1,0) -- (0,1);
		
		\node at (-0.25,1) {$A$};
		\node at (-0.25,-0.25) {$B$};
		\node at (1,-0.25) {$C$};
		
		\node at (-0.25,0.5) {$c$};
		\node at (0.5,-0.25) {$a$};
		\node at (0.9,0.3) {$b$};
		
		\node at (-2,-2) {$B^\vee$};
		\node at (-2,2) {$A^\vee$};
		\node at (2,-2) {$C^\vee$};
		
		\node at (0.25,-2) {$a^\vee$};
		\node at (2,1.85) {$b^\vee$};
		\node at (-2, 0.25) {$c^\vee$};
		\end{tikzpicture}
		
	\end{center}
	\caption{(The support of) the normal fan of the polytope of Figure \ref{triangle}: the normal cone to the full triangle, which is the face of $(0,0)$, is the origin; the normal cones to the sides of the triangles are the three half-lines; and the normal cones to the three vertices of the triangles are the portions of space in between the half-lines. Each of the normal cones (except the origin) has been labelled to show which face of the polytope it is normal to. Considering only the origin and the three half-lines, we obtain the $1$-skeleton of the complex.} \label{normalfan}
\end{figure}

We will see in the next subsection that tropicalizations of algebraic varieties are polyhedral complexes which enjoy strong structural properties.

We are also interested in the notion of \textit{mixed volume} of a collection of polytopes. First we recall how to perform standard operations on subsets of Euclidean space.

\begin{defn}
	Let $A,B \subseteq \mathbb{R}^n$. The \textit{Minkowski sum} of $A$ and $B$ is the set $$A+B:=\{a+b \in \mathbb{R}^n \mid a \in A, \, b \in B \}.$$ 
	
	If $\lambda \in \mathbb{R}$, then $$\lambda A:=\{\lambda a \in \mathbb{R}^n \mid a \in A \}.$$
\end{defn}

We define the \textit{normalized volume} of a polytope $P \subseteq \mathbb{R}^n$ to be the standard Euclidean volume multiplied by $n!$. This is so that the smallest simplex with integer vertices in $\mathbb{R}^n$ has volume 1.

We then use the following fact. It is proved in \autocite[Proposition~4.6.3]{MS}, under the stronger assumption that the polytopes have integer vertices, to draw the stronger conclusion that the resulting polynomial has integer coefficients; however, the same proof will yield the statement that we give here.

\begin{prop}\label{volume}
	Let $P_1,\dots,P_r$ be polytopes in $\mathbb{R}^n$. The normalized volume of the polytope $\lambda_1P_1+\dots+\lambda_rP_r$ is a homogeneous polynomial in $\lambda_1,\dots,\lambda_r$ of degree $n$.
\end{prop}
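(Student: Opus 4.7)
The plan is to reduce the problem to a setting where the combinatorial type of the Minkowski sum $\lambda_1 P_1 + \dots + \lambda_r P_r$ is constant, and then read off the polynomiality of the volume directly from the vertices of the sum.

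First I would take the common refinement $\Sigma$ of the normal fans of $P_1, \dots, P_r$. For each maximal cone $\sigma \in \Sigma$, pick any $w$ in its relative interior. Since $w$ is interior to a maximal cone in every individual normal fan, $\face_w(P_i)$ is a single vertex $v_i(\sigma)$ of $P_i$, and this assignment depends only on $\sigma$, not on the choice of $w$. For any $\lambda = (\lambda_1,\dots,\lambda_r)$ with $\lambda_i > 0$, the face of $\sum_i \lambda_i P_i$ maximized in direction $w$ is $\sum_i \lambda_i \face_w(P_i)$, so the vertex of $\sum_i \lambda_i P_i$ associated to $\sigma$ is exactly $v(\sigma, \lambda) := \sum_i \lambda_i v_i(\sigma)$, which depends linearly on $\lambda$.

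Next, I would fix a triangulation of the positive orthant $\mathbb{R}^n_{\geq 0}$ (or any convenient region) compatible with $\Sigma$, and use the fact that the normal fan of $\sum_i \lambda_i P_i$ is refined by $\Sigma$ for all $\lambda$ with $\lambda_i > 0$. This lets me decompose $\sum_i \lambda_i P_i$ into simplices whose vertices are among the $v(\sigma,\lambda)$, in a way that is independent of $\lambda$ as long as each $\lambda_i > 0$. The Euclidean volume of each such simplex is then the absolute value of a determinant whose entries are linear in $\lambda$, hence a polynomial in $\lambda$; summing over the simplices, $\vol(\sum_i \lambda_i P_i)$ is a polynomial $F(\lambda_1,\dots,\lambda_r)$ on the open orthant $\lambda_i > 0$. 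Multiplying by $n!$ gives the normalized volume as a polynomial there.

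Finally, homogeneity: for any $t>0$ we have $t \cdot \sum_i \lambda_i P_i = \sum_i (t\lambda_i) P_i$, so $F(t\lambda) = t^n F(\lambda)$, forcing $F$ to be homogeneous of degree $n$. By continuity of the volume under Hausdorff convergence of convex bodies (and the fact that a polynomial on an open set extends uniquely), $F$ extends to a homogeneous polynomial of degree $n$ on all of $\mathbb{R}^r_{\geq 0}$ that agrees with the normalized volume. The main technical point to be careful about is the handling of the boundary of the orthant, where the Minkowski sum can drop in dimension: this is taken care of because continuity of volume together with the already-established polynomial identity on the open orthant forces the correct values, and the MS argument cited carries over verbatim once one drops the integrality assumption, since integrality played no role beyond guaranteeing integer coefficients.
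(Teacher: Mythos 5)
Your strategy is the classical Minkowski argument: show the face lattice of $\sum_i \lambda_i P_i$ is constant on the open orthant, show the vertices are linear in $\lambda$, triangulate, compute simplex volumes as determinants, and extend by homogeneity and continuity. The paper gives no proof of its own here --- it defers to \autocite[Proposition~4.6.3]{MS} and asserts that the integrality hypothesis there is inessential --- so there is no specific route to compare against, but this is the standard argument and there is nothing wrong with the overall plan.

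The gap is in the step that actually produces a $\lambda$-independent triangulation. ``Fixing a triangulation of the positive orthant $\mathbb{R}^n_{\geq 0}$ compatible with $\Sigma$'' does not yield a triangulation of the polytope $\sum_i \lambda_i P_i$: $\Sigma$ is a fan in the dual (normal) space, and a simplicial refinement of $\Sigma$ has maximal cones that still correspond to single vertices of the Minkowski sum, not to simplices of a decomposition of it, so no triangulation of the primal polytope falls out of this. What you actually need is a triangulation of $\sum_i \lambda_i P_i$ whose combinatorics depends only on the face lattice --- for example a pulling (equivalently, placing) triangulation determined by a fixed linear ordering of the maximal cones of $\Sigma$, i.e.\ of the vertices $v(\sigma,\lambda)$; such a triangulation is a genuine geometric triangulation for every polytope realizing the given face lattice, so it transfers uniformly across the open orthant. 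Once that is in place you should also note that each of the resulting determinants is continuous and nonvanishing on the connected open orthant (provided $\sum_i P_i$ is full-dimensional, which is the only nontrivial case, since otherwise the volume is identically zero), hence of constant sign, which is what licenses dropping the absolute values. The homogeneity observation and the extension to the boundary by continuity of volume are fine as written.
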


As an example, consider the polytopes $P_1$ and $P_2$ where $P_1$ is the segment with vertices $(0,0)$ and $(0,1)$ and $P_2$ the segment with vertices $(0,0)$ and $(1,0)$: the volume of the Minkowski sum $\lambda_1P_1+\lambda_2P_2$ is then clearly $\lambda_1\lambda_2$.

This allows us to give the following definition of mixed volume.

\begin{defn}
	Let $P_1,\dots,P_n$ be polytopes in $\mathbb{R}^n$. The mixed volume of the polytopes, denoted $\mv(P_1,\dots,P_n)$ is the coefficient of the unique square-free monomial $\lambda_1\cdots\lambda_n$ in the polynomial obtained in Proposition \ref{volume}.
\end{defn}

We will see later on how to relate the mixed volumes of a collection of polytopes to the intersection of the $(n-1)$-skeletons of their normal fans.

\subsection{Tropicalizations}

Tropical geometry is the heart of this section. Here we will introduce tropical varieties and show how to interpret them as limits of amoebas; we will also state part of the \textit{Structure Theorem}, a fundamental result in tropical geometry which establishes structural properties of tropical varieties.

There are several equivalent ways to define tropical varieties. We choose the approach based on \textit{initial forms} of polynomials, as it is going to be the most convenient one to discuss tropical compactifications later on (in Section \ref{comprtp}).

The initial forms of a Laurent polynomial with complex coefficients can be thought of as limit forms that the polynomial takes when it is evaluated at some point $w \in (\ctimes)^n$, some of whose coordinates approach $0$ or $\infty$.

In the next definition we use multi-index notation: for $k \in \mathbb{Z}^n$, we use $w^k$ to mean the monomial $w_1^{k_1} \cdots w_n^{k_n}$.

\begin{defn}
	Let $x \in \mathbb{R}^n$, and let $f$ be a Laurent polynomial in $n$ variables, so $f \in \mathbb{C}[w_1^{\pm 1}, \dots, w_n^{\pm 1}]$. Write $f$ as $$f:=\sum_{k \in S} c_{k}w^{k}$$ for some finite subset $S \subseteq \mathbb{Z}^n$. 
	The \textit{initial form of $f$ with respect to $x$} is the polynomial $$\ini_{x}(f):=\sum_{k \in S_{x}} c_{k}w^{k}$$ where $S_{x}$ is the set $$S_{x}:=\{k \in S \mid x \cdot k \geq y \cdot k\,\,\, \forall y \in S\}.$$
	
	If $I$ is an ideal in the ring of Laurent polynomials, then $\ini_{x}(I)$ denotes the set of initial forms of polynomials in $f$.
\end{defn}

\begin{exa}\label{initials}
	To see an example of how to take initial forms, consider the polynomial $f:=w_1+w_2+1$. For this, the set $K$ is the set $\{(1,0), (0,1), (0,0) \}$. Hence, it is easy to see that:
	\begin{itemize}
		\item[1.] If $x$ has $x_1,x_2 <0$, then $x \cdot (0,0)$ is bigger than $x \cdot (1,0)$ and $x \cdot (0,1)$; thus the initial form of $f$ is 1.
		\item[2.] If $x_1-x_2 > 0$ and $x_1>0$, then the largest scalar product is $x \cdot (1,0)$; thus the initial form of $f$ is $w_1$.
		\item[3.] Similarly, if $x_1-x_2<0$ and $x_2 >0$ then we obtain $w_2$ as an initial form.
		\item[4.] If $x_1=x_2$, and both are positive, the maximum is obtained twice as $x \cdot (1,0)=x \cdot (0,1)$. Therefore the initial form is $w_1+w_2$.
		\item[5.] If $x_1=0$ and $x_2 <0$ then $x \cdot (1,0)=x \cdot (0,0)$ and both are larger than $x \cdot (0,1)$, so the initial form is $w_1+1$.
		\item[6.] In the same way, if $x_2=0$ and $x_1<0$ then the initial form is $w_2+1$.
		\item[7.] Finally, if $x=(0,0)$ then the initial form of $f$ is $f$ itself as all scalar products are the same.
	\end{itemize}
	
	It is clear that the regions we chose form a partition of $\mathbb{R}^2$, and that they are in fact the relative interiors of the polyhedra that we see in Figure \ref{normalfan}.
\end{exa}

The idea of initial forms is that if we plug in very large or very small values for some of the variables of a polynomial, then the value of the polynomial has no hope of being zero unless there are at least two monomials that are roughly of the same size. Thus, most of the time, the value of the polynomial will be decided simply by the fact that one of the monomials takes a much larger value than the others. This motivates the following definition of tropical variety.

\begin{defn}\label{tropical}
	Let $W \subseteq (\ctimes)^n$ be an algebraic variety, and let $I$ be the ideal of Laurent polynomials which vanish on $W$. Then the \textit{tropicalization} $\trop(W)$ of $W$ is the subset of $\mathbb{R}^n$ defined as $$\{x \in \mathbb{R}^n \mid \ini_{x}(I) \neq \langle 1 \rangle \}.$$
\end{defn}

\begin{rem}
	As the ideal is taken in the ring of Laurent polynomials, an ideal is the whole ring if and only if it is generated by monomials, as these are the invertible elements there. Hence the tropical variety of $W$ can be defined as the set of $x$'s for which the initial ideal of $I$ is not a monomial ideal.
\end{rem}

\begin{exa}
	It is clear at this point that the tropicalization of the algebraic variety $$W:=\{(w_1,w_2) \in (\ctimes)^2 \mid w_1+w_2+1=0 \}$$ is the $(n-1)$-skeleton of the polyhedral complex shown in Figure \ref{normalfan}.
\end{exa}

As we mentioned above, there are several ways to define tropical varieties, the equivalence of which is known as the \textit{Fundamental Theorem of Tropical Algebraic Geometry} (\autocite[Theorem~3.2.5]{MS}). Here we state just part of it.

\begin{thm}[{\autocite[Theorem~3.2.5]{MS}}]\label{fundamental}
	Let $W \subseteq (\ctimes)^n$ be an algebraic variety, $I$ the ideal of Laurent polynomials which vanish on $W$. Then $$\trop(W)=\bigcap_{f \in I} \trop(V(f))$$ where $V(f)$ denotes the hypersurface cut out in $(\ctimes)^n$ by $f$.
\end{thm}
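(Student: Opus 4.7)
The plan is to prove the two inclusions separately; one is almost immediate from the definitions, while the other requires the theory of Gröbner bases with respect to weight vectors.

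For the easy inclusion $\trop(W) \subseteq \bigcap_{f \in I} \trop(V(f))$, I would fix $x \in \trop(W)$, so that $\ini_x(I) \neq \langle 1 \rangle$. As observed in the remark preceding the theorem, in the Laurent polynomial ring the only units are the monomials, so an ideal equals $\langle 1 \rangle$ exactly when it contains some monomial; in particular $\ini_x(I)$ contains no monomial. Since $\ini_x(f) \in \ini_x(I)$ for every $f \in I$, no such $\ini_x(f)$ can be a monomial, which (applied to the principal ideal $\langle f \rangle$, whose initial ideal is $\langle \ini_x(f) \rangle$) gives $x \in \trop(V(f))$.

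For the reverse inclusion I would argue by contrapositive: assuming $x \notin \trop(W)$, so that $\ini_x(I) = \langle 1 \rangle$, I would produce a single $f \in I$ with $\ini_x(f)$ a monomial, which forces $x \notin \trop(V(f))$. The hypothesis supplies an equation $1 = \sum_i h_i \, \ini_x(f_i)$ with $f_i \in I$ and $h_i$ Laurent polynomials, but since $\ini_x$ is not a ring homomorphism this does not immediately produce a single element of $I$ with the desired initial form. The standard remedy, carried out in Chapter 2 of \cite{MS}, is to refine the weight $x$ by a fixed monomial order so as to obtain a full term order $>_x$, and compute a reduced Gröbner basis $g_1,\dots,g_r$ of $I$ with respect to $>_x$. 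Tracking $x$-weights through the division algorithm shows that the $\ini_x(g_i)$ generate $\ini_x(I)$ as an ideal, and moreover that if this ideal contains $1$ then some individual $\ini_x(g_i)$ must itself be a monomial, so $f := g_i$ works.

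The hard part, and the bulk of the work, is precisely this last step: the full Gröbner-basis machinery for weight vectors (as opposed to term orders) must be set up first, including the existence of such Gröbner bases and the compatibility between initial forms for a weight $x$ and initial monomials for a refinement $>_x$. These facts are standard but technically involved, and rather than reprove them I would appeal directly to Chapters 2--3 of \cite{MS}.
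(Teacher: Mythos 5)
The paper does not actually prove this statement: it is cited verbatim as {\autocite[Theorem~3.2.5]{MS}}, and the exposition proceeds to use it as a black box. So there is no internal proof to compare yours against; what you have supplied is a sketch of the Maclagan--Sturmfels argument, and the relevant question is whether your sketch is sound.

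Your easy inclusion is fine. For the hard inclusion, one remark and one concern. The remark: you say that because $\ini_x$ is not a ring homomorphism, the relation $1 = \sum_i h_i \,\ini_x(f_i)$ ``does not immediately produce a single element of $I$ with the desired initial form.'' In fact it almost does, and the fix is cheap: decompose each $h_i$ into $x$-homogeneous pieces and keep only the piece of $x$-weight $-\!\deg_x \ini_x(f_i)$, so that every summand $h_i\,\ini_x(f_i)$ is $x$-homogeneous of weight $0$. Then $f := \sum_i h_i f_i \in I$ satisfies $\ini_x(f) = \sum_i h_i \,\ini_x(f_i) = 1$, since the weight-$0$ part of $f$ is exactly this sum and all other contributions have strictly smaller $x$-weight. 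This already yields the tropical-basis element you need, and it is essentially what the argument in \cite{MS} reduces to after the Gr\"obner machinery has been invoked to guarantee that the finitely many $\ini_x(g_i)$ generate $\ini_x(I)$. The concern: your stronger assertion that ``some individual $\ini_x(g_i)$ must itself be a monomial'' for a reduced Gr\"obner basis is not obviously true and is not needed. The standard compatibility only gives that $\ini_>(\ini_x(g_i))$ is a monomial (trivially so, as a leading term) and that these leading monomials generate $\ini_{>_x}(I)$; none of this forces $\ini_x(g_i)$ to consist of a single term. If you keep the Gr\"obner route, you should conclude via the homogeneous-combination argument above rather than by pointing at a single $g_i$. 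Since you ultimately defer to Chapters 2--3 of \cite{MS}, this is a fixable inaccuracy rather than a fatal gap, but as written that one sentence overclaims.
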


Theorem \ref{fundamental} is obviously very similar to the corresponding result for amoebas (Theorem \ref{amoebasatz}). The comparison between the two allows us to make precise the idea of tropical varieties being ``nonstandard amoebas'', in the sense that they can be thought of as amoebas where the base of the logarithm is infinite. 

First, we recall what we mean by \textit{Hausdorff metric}.

\begin{defn}\label{hausdist}
	Let $A,B \subseteq \mathbb{R}^n$ be closed sets, and let $d$ denote the standard Euclidean metric on $\mathbb{R}^n$. The \textit{Hausdorff distance} between $A$ and $B$ is defined as $$\dhaus(A,B):=\max \left\{ \sup_{a \in A}d(a,B), \sup_{b \in B} d(b,A) \right\}$$ where $d(x,Y)$ denotes the usual Euclidean distance between the point $x$ and the set $Y$.
\end{defn}

We will call \textit{Hausdorff topology} the topology induced on the space of closed sets of $\mathbb{R}^n$ by this metric, and we will say that a set $S$ is the \textit{Hausdorff limit} of a sequence $\{S_j\}_{j \in \omega}$ if the sequence converges to $S$ in the Hausdorff topology. It is clear by the definition that $\dhaus$ is a metric, not a pseudometric, and therefore the topology is $T_2$: Hausdorff limits are unique.

For $t \in \mathbb{R}$, denote by $\Log_t:(\ctimes)^n \rightarrow \mathbb{R}^n$ the map $$(w_1,\dots,w_n) \mapsto (\log_t|w_1|,\dots,\log_t|w_n|)$$ and for an algebraic variety $W$ denote by $\am^t$ the image of $W$ under $\Log_t$ (so that the usual map $\Log$ coincides with $\Log_e$ and the usual amoeba $\am$ with $\am^e$).

The limit of the amoebas in base $t$ of a variety, for $t$ going to infinity, is the tropicalization.

\begin{thm}\label{nonstandardamoebas}
	For $t \rightarrow \infty$, the sets $\am^t$ converge to $\trop(W)$ in the Hausdorff metric.
\end{thm}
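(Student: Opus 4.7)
The plan starts with the identity $\log_t|w_i| = \log|w_i|/\log t$, which gives $\Log_t = (\log t)^{-1}\Log$ and hence $\am^t = (\log t)^{-1}\am$. Thus the claim is equivalent to showing that the rescaled amoebas $(\log t)^{-1}\am$ converge to $\trop(W)$ in the Hausdorff metric as $t \to \infty$. I would prove this by first treating hypersurfaces, then reducing the general case via a tropical basis together with Theorems \ref{amoebasatz} and \ref{fundamental}.

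For a Laurent polynomial $f = \sum_{k \in S} c_k w^k$ and a point $w \in V(f)$, setting $y = \Log_t(w)$ gives $|c_k w^k| = |c_k| t^{y \cdot k}$, and the identity $f(w)=0$ combined with the triangle inequality forces the maximum of $|c_k|t^{y \cdot k}$ over $k \in S$ to be matched, up to a factor $|S|-1$, by some other term. Taking $\log_t$ and letting $t \to \infty$, the constant contributions $\log_t|c_k|$ vanish, and one recovers exactly the condition that the maximum of $y \cdot k$ is attained at least twice on $S$, i.e.\ $y \in \trop(V(f))$. Conversely, given $y \in \trop(V(f))$ with $S_y := \{k \in S : y \cdot k \text{ maximal}\}$ of size $\geq 2$, the initial form $\ini_y(f)$ is not a monomial and so has a zero $u \in (\ctimes)^n$; the rescaled family $f(t^{y_1}v_1,\ldots,t^{y_n}v_n)/t^{\max_k y \cdot k}$ converges to $\ini_y(f)$ uniformly on compact subsets of $(\ctimes)^n$, and a standard implicit-function argument at a smooth point $u$ produces, for $t$ large, genuine zeros $v_t \to u$, whence $w_t := (t^{y_1}v_{t,1},\ldots,t^{y_n}v_{t,n}) \in V(f)$ satisfies $\Log_t(w_t) \to y$.

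For the general case, choose a tropical basis $f_1,\ldots,f_r$ of the defining ideal $I$ of $W$, so that $\trop(W) = \bigcap_{i=1}^r \trop(V(f_i))$. Theorem \ref{amoebasatz} gives $(\log t)^{-1}\am \subseteq \bigcap_i (\log t)^{-1}\mathcal{A}_{f_i}$, and by the hypersurface case every Hausdorff accumulation point of the right-hand side lies in $\bigcap_i \trop(V(f_i)) = \trop(W)$; this yields the $\limsup$ inclusion. For the $\liminf$ inclusion, one must exhibit, for every $y \in \trop(W)$, a sequence $y_t \in (\log t)^{-1}\am$ converging to $y$. Here I would invoke the Kapranov-type lift underlying the Fundamental Theorem (Theorem \ref{fundamental}): it produces a point of $W$ over a generalised Puiseux field $\mathbb{C}\{\{t^{\mathbb{R}}\}\}$ with valuation $-y$, and specialising this formal series at sufficiently large real values of $t$ yields genuine complex points of $W$ whose $\Log_t$-images converge to $y$.

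The main obstacle is precisely this $\liminf$ inclusion, because Hausdorff limits do not in general commute with (even finite) intersections: bounding the amoeba from above via a tropical basis is immediate, but producing actual complex points of $W$ — and not merely of the individual hypersurfaces $V(f_i)$ — whose rescaled logarithms track a prescribed tropical point requires the full force of the non-Archimedean construction, which is where the genuine content of the theorem lies.
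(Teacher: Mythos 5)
The paper does not actually prove this theorem; its ``proof'' is a citation to Mikhalkin for the hypersurface case and to Jonsson's \emph{Degenerations of amoebae and Berkovich spaces} for arbitrary codimension. So there is no argument in the paper to compare yours against; you are effectively attempting to reprove Jonsson's Theorem~A. The rescaling identity $\am^t=(\log t)^{-1}\am$, the hypersurface argument (forward via a matching-of-monomials estimate, backward via the implicit function theorem near a smooth zero of the initial form), and the $\limsup$ inclusion obtained from a tropical basis together with Theorem~\ref{amoebasatz} are all sound, and the hypersurface part is essentially Mikhalkin's.

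The gap is exactly where you flagged it, in the $\liminf$ inclusion, and the fix you propose does not close it. Kapranov's lifting theorem produces a point of $W$ over the Hahn series field $\mathbb{C}\{\{t^{\mathbb{R}}\}\}$ with a prescribed (real) valuation, but elements of this field are \emph{formal} series indexed by well-ordered subsets of $\mathbb{R}$; they are not convergent functions of $t$, so ``specialising at sufficiently large real values of $t$'' is not a defined operation. If you retreat to the algebraic closure of $\mathbb{C}(t)$ (convergent Puiseux series with rational exponents), specialisation does make sense, but you only obtain lifts over rational points $y\in\trop(W)\cap\mathbb{Q}^n$, and upgrading a pointwise statement on a dense subset to (local) Hausdorff convergence of the whole polyhedral complex requires a uniformity argument that is nowhere in your sketch: the radius of convergence and the rate at which $\Log_t(w_t)\to y$ both depend on the lift, and controlling them uniformly over the complex is genuinely hard. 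This is precisely the content of Jonsson's theorem, whose actual proof goes through Berkovich analytification and a scaling-limit argument rather than specialisation of series, so the step you recognised as the crux is indeed still open in your plan.
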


\begin{proof}
	An easy proof for hypersurfaces is given in \autocite[Corollar~6.4]{M04}. For varieties of arbitrary codimension it is a harder problem, see \autocite[Theorem~A]{Jon}.
\end{proof}

We finish this section by reviewing a few more results in tropical geometry which describe structural properties of tropical varieties.

\begin{defn}
	A polyhedral complex is \textit{pure} if all its facets (i.e$.$ all the polyhedra which are not faces of larger polyhedra in the complex) have the same dimension.
\end{defn}

Thus for pure complexes it makes sense to talk about the \textit{dimension of the complex}, meaning the dimension of the facets. Part of the Structure Theorem asserts that the tropicalization of an algebraic variety of (complex) dimension $d$ is a pure polyhedral complex of (real) dimension $d$.

\begin{thm}[Part of the Structure Theorem, {\autocite[Theorem~3.3.6]{MS}}]
	Let $W \subseteq (\ctimes)^n$ be an algebraic variety of dimension $d$. Then $\trop(W)$ is a pure polyhedral complex of dimension $d$.
\end{thm}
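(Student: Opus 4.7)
My plan splits into three parts: showing $\trop(W)$ carries a polyhedral complex structure, bounding its dimension above by $d$, and proving purity with a matching lower bound.

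For the polyhedral structure I would use the \textbf{Gröbner fan} of the ideal $I$. Fix a finite generating set $f_1,\dots,f_r$ of $I$. For each choice of support subsets $S_j \subseteq \supp(f_j)$, the locus $\{x \in \mathbb{R}^n : S_x(f_j) = S_j \textrm{ for all } j\}$ is the relative interior of a rational polyhedron, cut out by the linear inequalities $x \cdot k \geq x \cdot k'$ for $k \in S_j$, $k' \in \supp(f_j) \setminus S_j$, together with the equalities within each $S_j$. Finitely many such polyhedra cover $\mathbb{R}^n$ and, after a refinement argument using standard-basis theory, fit together into a polyhedral fan $\Sigma$ with the property that $\ini_x(I)$ depends only on the cone of $\Sigma$ in whose relative interior $x$ lies. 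Then $\trop(W)$ is the union of those closed cones of $\Sigma$ on which the initial ideal is not generated by monomials, exhibiting it as a polyhedral subcomplex of $\Sigma$.

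The key algebraic input for the dimension is flatness of Gröbner degenerations: for every $x$, the scheme $V(\ini_x(I)) \subseteq (\ctimes)^n$ has the same Krull dimension $d$ as $W$, because it arises as the special fibre of a flat family over $\mathbb{A}^1$ whose generic fibre is $W$. Combining this with the observation that $\ini_x(I)$ is invariant under translating $x$ along the linear span of its containing cone $\sigma$, the subtorus $T_\sigma \subseteq (\ctimes)^n$ corresponding to that linear span acts on $V(\ini_x(I))$. Hence $\dim \sigma \leq \dim V(\ini_x(I)) = d$, proving $\dim \trop(W) \leq d$.

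For purity and the lower bound I would apply Noether normalization in toric form: there exists a surjective homomorphism $\phi: (\ctimes)^n \twoheadrightarrow (\ctimes)^d$ whose restriction to $W$ is finite. The induced linear map $\Phi: \mathbb{R}^n \to \mathbb{R}^d$ is compatible with tropicalization via $\Log_t \circ \phi = \Phi \circ \Log_t$, so Theorem~\ref{nonstandardamoebas} applied to both $W$ and $\phi(W) = (\ctimes)^d$ forces $\Phi$ to send $\trop(W)$ surjectively onto $\trop((\ctimes)^d) = \mathbb{R}^d$. Moreover, finiteness of $\phi|_W$ passes to the tropicalization, making the fibres of $\Phi|_{\trop(W)}$ over generic points of $\mathbb{R}^d$ finite. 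A maximal cell of $\trop(W)$ must therefore surject onto $\mathbb{R}^d$ with finite generic fibres, forcing its dimension to equal exactly $d$, which yields purity. The main obstacle here is flatness of Gröbner degenerations together with compatibility of finite monomial maps with tropicalization: these are classical but nontrivial, proved in \cite{MS} via standard-basis theory over a discrete valuation ring, which in practice I would cite as a black box rather than reprove.
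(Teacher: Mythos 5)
The paper does not actually prove this statement: it is imported verbatim as Theorem~3.3.6 of Maclagan--Sturmfels \cite{MS} and used as a black box, so there is no internal proof to compare against. Your proposal is therefore being judged on its own merits as a reconstruction of the proof in \cite{MS}.

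Your first two parts are sound and match the standard approach: the Gr\"obner fan gives the polyhedral structure, and flatness of Gr\"obner degenerations plus the torus-invariance of $V(\ini_x(I))$ give the upper bound $\dim\trop(W)\leq d$. Acknowledging the flatness input as a black box is reasonable.

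The purity argument has a genuine gap. From toric Noether normalization you get a finite monomial map $\phi\colon W\to(\ctimes)^d$ and (granting compatibility with tropicalization) that $\Phi$ maps $\trop(W)$ \emph{onto} $\mathbb{R}^d$ with finite generic fibres. This correctly forces $\dim\trop(W)\geq d$, hence $\dim\trop(W)=d$. But purity is a stronger statement: \emph{every} maximal cell must have dimension $d$, not merely that some cell does. A maximal cell $\sigma$ of dimension $k<d$ does not surject onto $\mathbb{R}^d$ (its image under the linear map $\Phi$ has dimension at most $k$), so the sentence ``a maximal cell must therefore surject onto $\mathbb{R}^d$\dots forcing its dimension to equal exactly $d$'' conflates the union of maximal cells, which does surject, with an individual maximal cell, which need not. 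The standard repair is local: for $x\in\relint(\sigma)$ with $\sigma$ maximal, $\star_\sigma(\trop(W))=\trop(\ini_x(W))$ equals the linear span of $\sigma$ (by maximality); then one needs the separate lemma that a variety whose tropicalization is a single linear subspace of dimension $k$ is a finite union of cosets of the corresponding $k$-dimensional subtorus, whence $d=\dim\ini_x(W)=\dim\sigma$. That lemma is what your sketch is missing, and it is where the real work of the purity proof lives.
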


\begin{defn}
	Let $\Sigma$ be a polyhedral complex, $\tau$ a polyhedron in $\Sigma$. The \textit{star} $\star_\tau(\Sigma)$ is the polyhedral complex formed by the polyhedra of the form $$\sigma':=\{\lambda(x-y) \mid \lambda \geq 0, x \in \tau, y \in \sigma \}$$ for each polyhedron $\sigma \in \Sigma$ that $\tau$ is a face of (including $\tau=\face_0(\tau)$).
\end{defn}

\begin{exa}
	If $\Sigma$ is the 1-skeleton of the polyhedral complex in Figure \ref{normalfan}, then the star of each half-line is the line that contains it.
\end{exa}

We recall the result which states that the star of a cell in $\trop(W)$ is the tropicalization of an initial variety of $W$ and review some of its consequences.

\begin{lem}[{\autocite[Lemma~3.3.7]{MS}}]\label{startrop}
	Let $\Sigma$ be the polyhedral fan supported on $\trop(W)$. Suppose $\tau \in \Sigma$ is a face, and $w \in \relint(\tau)$. Then $\star_\tau(\Sigma)=\trop(\ini_w(W))$.
\end{lem}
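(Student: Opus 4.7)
The plan is to reduce both sides to the same pointwise description via the key identity: for $w \in \relint(\tau)$ and $v \in \mathbb{R}^n$, if $\epsilon > 0$ is sufficiently small (depending on $v$ and $I$), then
$$\ini_{w + \epsilon v}(I) = \ini_v(\ini_w(I)).$$
Granting this identity, each side of the claimed equality admits a translation into a condition about perturbations of $w$ inside $\trop(W)$. Indeed, $v \in \trop(\ini_w(W))$ iff $\ini_v(\ini_w(I))$ is not generated by a monomial, iff $\ini_{w+\epsilon v}(I)$ is not generated by a monomial for all small $\epsilon > 0$, iff the ray $w + \epsilon v$ lies in $\trop(W)$ for all small $\epsilon > 0$.

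On the other side, since $\Sigma$ is a fan with $w$ in the relative interior of the cell $\tau$, a small perturbation $w + \epsilon v$ belongs to $|\Sigma|$ if and only if it belongs to some cell $\sigma$ having $\tau$ as a face; and this in turn holds for all small $\epsilon > 0$ exactly when $v$ lies in the cone generated by $\sigma - \tau$ for some such $\sigma$, which is the definition of $\star_\tau(\Sigma)$. Combining the two characterizations yields $\trop(\ini_w(W)) = \star_\tau(\Sigma)$, as required. It is worth noting that this argument also shows a posteriori that $\trop(\ini_w(W))$ is itself a fan, and that it is supported on $\Sigma$ translated so that $\tau$ becomes the origin.

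The main obstacle is the compose-initials identity at the level of ideals. For a single Laurent polynomial $f = \sum_{k \in S} c_k w^k$ the identity is elementary: $\ini_w(f)$ keeps only the terms $k$ where $w \cdot k$ is maximal on $S$, and then $\ini_v$ keeps only those for which $v \cdot k$ is further maximal; for $\epsilon$ small enough that $w \cdot k$ strictly dominates on $S \setminus S_w$, the set of $(w + \epsilon v)$-maximal indices agrees with this two-step selection. The delicate point is that $\ini_w$ does not commute with taking generators of an ideal, so the polynomial identity cannot be applied naively to a generating set. The standard way to bridge this gap is to fix a \emph{Gröbner basis} of $I$ adapted to a term order refining $w$, show that the same basis remains Gröbner with respect to a term order refining $w + \epsilon v$ for all sufficiently small $\epsilon > 0$, and then deduce both $\ini_w(I) = \langle \ini_w(g) : g \in G\rangle$ and $\ini_{w + \epsilon v}(I) = \langle \ini_{w + \epsilon v}(g) : g \in G\rangle$, at which point the polynomial version of the identity transfers to ideals. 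Finally, since $I$ is finitely generated, a single $\epsilon$ works uniformly, completing the proof.
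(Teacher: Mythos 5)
Your proposal is correct and follows the same route as the textbook proof of \autocite[Lemma~3.3.7]{MS} which the paper cites verbatim: the whole argument rests on the composition identity $\ini_{w+\epsilon v}(I)=\ini_v(\ini_w(I))$ for small $\epsilon>0$, established there via a Gr\"obner basis that remains Gr\"obner on a neighbourhood of $w$, exactly as you sketch. The two-way translation between ``$v$ lies in the star'' and ``$w+\epsilon v$ stays in $\trop(W)$ for small $\epsilon$'' is the standard bridge, and your observation that the delicate step is passing from the elementary single-polynomial identity to the ideal-level identity correctly isolates the real content.
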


As the star depends on the face, and not on the point, this implies that the initial variety $\ini_w(W)$ is constant as $w$ varies in the relative interior of a face. Therefore, we can give the following definition.

\begin{defn}
	Let $\Sigma$ be the polyhedral fan supported on $\trop(W)$, $\tau \in \Sigma$. We denote by $W_\tau$ the initial variety $\ini_w(W)$ for every $w \in \relint(\tau)$.
\end{defn}

We should remark that initial varieties of irreducible varieties are not necessarily irreducible.

Note that if $\tau$ is a facet of $\Sigma$ then by definition the star of $\tau$ in $\Sigma$ is a linear space (the linear span of $\tau$) and therefore the tropicalization of the initial variety $W_\tau$ is a subspace of $\mathbb{R}^n$. This implies that $W_\tau$ is a finite union of cosets of an algebraic subgroup of $(\ctimes)^n$.

\begin{exa}
	Thinking back about Example \ref{initials}, we see how in that case the four non-monomial initial forms of the polynomial correspond to the four polyhedra in the $1$-skeleton of the fan (three half-lines and the origin).
\end{exa}

Finally, we consider the following proposition, according to which the initial variety of $W \subseteq (\ctimes)^n$ with respect to the face $\tau$ is invariant under multiplication by elements in the image under $\exp$ of the complex space generated by $\tau$.

\begin{prop}\label{initialinvariant}
	Let $W \subseteq (\ctimes)^n$ be an algebraic variety, $\tau \in \trop(W)$ a face of its tropicalization, $\tau_{\mathbb{C}}$ the complex subspace of $\mathbb{C}^n$ generated by $\tau$ (seen as a subset of $\mathbb{R}^n \subseteq \mathbb{C}^n$).
	
	Then $W_\tau$ is invariant under translation by elements of $\exp(\tau_{\mathbb{C}})$.
\end{prop}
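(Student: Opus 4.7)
The plan is to unpack the definitions and show that every element of $\ini_w(I)$, for $w \in \relint(\tau)$, is a simultaneous quasi-eigenfunction of translation by $\exp(v)$ for each $v \in \tau_{\mathbb{C}}$: each such initial form will transform by a nonzero scalar factor, so its zero set (and hence $W_\tau$) is preserved.

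First I would fix any $f \in I(W)$, write $f = \sum_{k \in S} c_k z^k$, and consider the support $F := \{k \in S \mid w\cdot k \geq y \cdot k \,\, \forall y \in S\}$ of the initial form $\ini_w(f) = \sum_{k \in F} c_k z^k$. Since $\ini_{w'}(f)$ is constant on $\relint(\tau)$ (as the initial ideal is constant there, and hence the face of the Newton polytope of $f$ selected is constant), for any $k,k' \in F$ the function $w' \mapsto (k - k') \cdot w'$ vanishes on $\relint(\tau)$. Because $\trop(W)$ is a rational polyhedral fan (the ground field carries the trivial valuation) and $\tau$ is a cone with apex at the origin, the relative interior $\relint(\tau)$ $\mathbb{R}$-linearly spans $\tau$, so $k - k'$ is orthogonal to every vector in the real linear span of $\tau$.

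Next I would extend this orthogonality to $\tau_{\mathbb{C}}$: any $v \in \tau_{\mathbb{C}}$ can be written as $a + ib$ with $a, b$ in the real span of $\tau$, and since $k, k' \in \mathbb{Z}^n$, the identity $(k-k') \cdot v = (k - k') \cdot a + i (k - k') \cdot b = 0$ follows. Consequently, for every $v \in \tau_{\mathbb{C}}$ all the monomials in $\ini_w(f)$ acquire the same exponential factor: picking any $k_0 \in F$,
\[
\ini_w(f)(\exp(v) \cdot z) = \sum_{k \in F} c_k \exp(v \cdot k) z^k = \exp(v \cdot k_0) \cdot \ini_w(f)(z).
\]
Hence the vanishing locus of $\ini_w(f)$ is invariant under the map $z \mapsto \exp(v) \cdot z$ on $(\ctimes)^n$.

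Finally I would conclude by intersecting: since $W_\tau = \bigcap_{f \in I(W)} V(\ini_w(f))$ and each cutting-out polynomial transforms by a nonzero scalar, the intersection is preserved by translation by $\exp(v)$ for every $v \in \tau_{\mathbb{C}}$. The only point that requires minor care, rather than a genuine obstacle, is the passage from orthogonality against real points in $\relint(\tau)$ to orthogonality against arbitrary complex vectors in $\tau_{\mathbb{C}}$; this is handled by the $\mathbb{R}$-bilinearity argument above, using that $\tau$ is a cone so that its relative interior already spans it linearly.
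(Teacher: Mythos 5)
Your argument is quite different in spirit from the paper's, which invokes the result of Maclagan--Sturmfels on tropicalizations of monomial maps and then runs a dimension count on the quotient $W_\tau/\exp(\tau_\mathbb{C})$; your route through initial forms and scalar eigenfactors is more elementary and self-contained, and the complexification step (passing from orthogonality against real points of $\relint(\tau)$ to orthogonality against $\tau_\mathbb{C}$ by $\mathbb{R}$-bilinearity, using that $k-k'$ is an integer vector) is handled correctly.

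However, there is a real gap at the step ``since $\ini_{w'}(f)$ is constant on $\relint(\tau)$ \dots\ $k-k'$ is orthogonal to every vector in the real linear span of $\tau$.'' Constancy of the initial \emph{ideal} $\ini_{w'}(I)$ on $\relint(\tau)$ does not give constancy of $\ini_{w'}(f)$ for an arbitrary $f \in I$: the cone $\tau$ need not lie inside a single cone of the normal fan of $\newt(f)$ when $f$ is not drawn from a suitable Gr\"obner basis. Concretely, take $I=\langle x-y\rangle \subseteq \mathbb{C}[x^{\pm 1},y^{\pm 1},z^{\pm 1}]$, so that $\trop(W)$ is the plane $\tau=\{(a,a,b): a,b\in\mathbb{R}\}$ (a single maximal cone), and let $f=(x-y)(1+x)=x+x^2-y-xy\in I$. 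For $w=(1,1,0)\in\relint(\tau)$ one finds $\ini_w(f)=x^2-xy$, whose support differences are orthogonal to $\tau$; but for $w'=(0,0,1)\in\relint(\tau)$ all four monomials get weight $0$, so $\ini_{w'}(f)=f$, and $(2,0,0)-(1,0,0)=(1,0,0)$ is not orthogonal to $\tau$. Indeed $V(\ini_{w'}(f))=V(x-y)\cup V(1+x)$ is not invariant under translation by $\exp(\tau_\mathbb{C})=\{(e^a,e^a,e^b)\}$, so the intersection $\bigcap_{f\in I}V(\ini_w(f))$ cannot be shown invariant term-by-term as you intend.

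The fix keeps your strategy and is short. From $\ini_{w'}(I)=\ini_w(I)$ for all $w'\in\relint(\tau)$ one gets $\ini_{w'}(\ini_w(I))=\ini_w(I)$, so the ideal $J:=\ini_w(I)$ satisfies $\ini_{w'}(J)=J$ for every $w'\in\relint(\tau)$. Decompose a given $g\in J$ into pieces $g_a$ supported on the cosets $a\in\mathbb{Z}^n/(\tau^\perp\cap\mathbb{Z}^n)$; choosing $w'\in\relint(\tau)$ generically so that distinct cosets meeting the (finite) support of $g$ receive distinct $w'$-weights, $\ini_{w'}(g)$ is the top piece $g_{a_0}\in J$, and induction shows every $g_a\in J$. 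Hence $J$ is generated by elements whose support lies in a single coset of $\tau^\perp\cap\mathbb{Z}^n$; your scalar-factor computation applies verbatim to those generators, and $W_\tau=V(J)$ is then invariant under $\exp(\tau_\mathbb{C})$ as claimed.
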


\begin{proof}
	This is a consequence of \autocite[Corollary~3.2.13]{MS}, which introduces a notion of tropicalization for monomial maps and shows that monomial maps commute with tropicalizations. Since quotienting by an algebraic subgroup of $(\ctimes)^n$ can be seen as a monomial map, this result implies that the dimension of $W_\tau/\exp(\tauc)$ is equal to the dimension of $W_\tau$, which is therefore invariant under translation by elements of $\exp(\tauc)$.
\end{proof}

\section{Exponential Sums Equations}\label{expsumeq}

In this section we introduce the framework of exponential sums equations. This is a natural way to interpret the question of exponential-algebraic closedness for varieties of the form $L \times W$, and allows us to talk about Newton polytopes and related objects.

Let $$(\mathbb{C}^n)^\vee :=\left\{ \phi:\mathbb{C}^n \rightarrow \mathbb{C} \mid \phi \textnormal{ is linear} \right\}$$ denote the usual dual space of the complex vector space $\mathbb{C}^n$. Note that we can easily talk about convexity in this space: given two functions $\phi_1,\phi_2$, the segment between $\phi_1$ and $\phi_2$ is the set $\{(1-t)\phi_1 + t\phi_2 \mid t \in [0,1] \}$, and a set is convex if it contains the segment between any two of its elements. As usual, the \textit{convex hull} of a set is the smallest convex set which contains it.

\begin{defn}
	An \textit{exponential sum} is a function $f:\mathbb{C}^n \rightarrow \mathbb{C}$ of the form $$z \mapsto \sum_{\phi \in S} c_\phi \exp(\phi(z))$$ where $S \subseteq (\mathbb{C}^n)^\vee$ is a finite set, and $c_\phi \in \mathbb{C}$ for each $\phi$.
	
	The \textit{Newton polytope} of the exponential sum $f$ is the convex hull of $S$ in $(\mathbb{C}^n)^\vee$.
\end{defn}

Of course systems of exponential sums equations can take very different forms, but we are only interested in the ones we can attach to varieties of the form $L \times W$.

A subvariety $W \subseteq (\ctimes)^n$ is defined by a system of Laurent polynomial equations, so equations of the form $$\sum_{j \in S} c_j w^j=0$$ where $S \subseteq \mathbb{Z}^n$ is a finite subset. It is clear that any Laurent polynomial can be seen as an exponential sum of the form $$\sum_{j \in S} c_j \exp(j \cdot z)=0$$ where $j \cdot z=j_1z_1+\dots+j_nz_n$ denote the usual scalar product. The system of exponential sums obtained from the Laurent polynomials which define $W$ clearly defines the complex analytic subset $\log W$ of $\mathbb{C}^n$.

It is well-known (see for example \autocite[Aside 3.55]{MilAG}) that for every point $w$ of an irreducible affine variety $W$ there is a Zariski-open neighbourhood of $w$ in which $W$ is a complete intersection, that is, it is defined by exactly $\codim W$ polynomial. As we are working in the multiplicative group, we can then apply the Rabinowitsch trick and assume without loss of generality that $W$ is a complete intersection.

As for the linear space $L \leq \mathbb{C}^n$, this is defined by linear equations such as $\lambda_1z_1+\dots+\lambda_nz_n=0$. Clearly, the function $\phi_\lambda:(z_1,\dots,z_n) \mapsto \lambda_1z_1+\dots+\lambda_nz_n$ is an element of $(\mathbb{C}^n)^\vee$, and therefore, $L$ is the unique irreducible component containing 0 of the complex analytic set defined by the exponential sums $$\exp(\phi_\lambda(z))-1=0.$$ This larger set is a countable union of translates of $L$. Note that it is \textbf{not} the set $L+2\pi i \mathbb{Z}^n$: to see this, note that $\exp(\phi_\lambda(z))-1=0$ defines a complex analytic sets, which therefore is always closed, while in general $L+2\pi i \mathbb{Z}^n$ is not, for example in the case in which $L$ is defined by $z_2-\sqrt{2}z_1=0$.

\begin{defn}\label{exposys}
	Let $L \leq \mathbb{C}^n$ be a linear space defined by equations $\phi_{\lambda}(z)=0$, $W \subseteq (\ctimes)^n$ an algebraic variety defined by equations $\sum_{j \in S} c_j w^j$. 
	
	The \textit{system of exponential sums attached to $L \times W$} is the system defined by the corresponding equations of the form $\exp(\phi_\lambda(z))=1$ and $\sum_{j \in S}c_j\exp(j \cdot z)=0$.
\end{defn}

It is useful to associate a system of exponential sums to the variety $L \times W$ because it allows us to give a characterization of rotundity of the pair in terms of the Newton polytope of the system; this is similar to \autocite[Lemma~3]{Zil02} (with its converse) but we state it in a slightly different way.

\begin{defn}
	Let $V$ be a vector space of dimension $n$, and let $A_1,\dots,A_n$ be finite subsets of $V$. We say that $A_1,\dots, A_n$ satisfy the \textit{Rado property} if there is a basis $\{v_1,\dots,v_n \}$ of $V$ such that $v_j \in A_j$ for each $j$. 
\end{defn}

This property is named after \textit{Rado's Theorem on Independent Transversals}:

\begin{thm}[{\autocite[Theorem~1]{Rad42}}]\label{rado}
	Let $A_1,\dots A_n$ be finite subsets of a vector space $V$ of dimension $n$. Then the span of $\bigcup_{j \in J} A_j$ has dimension at least $|J|$ for each subset $J \subseteq \{1,\dots,n\}$ if and only if $A_1,\dots,A_n$ satisfy the Rado property.
\end{thm}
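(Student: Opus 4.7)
The easy direction is immediate: if $v_1,\dots,v_n$ is a basis of $V$ with $v_j \in A_j$, then for any $J \subseteq \{1,\dots,n\}$ the set $\{v_j : j \in J\}$ is linearly independent and contained in $\bigcup_{j \in J} A_j$, forcing the span to have dimension at least $|J|$.

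For the nontrivial direction I would induct on $n$. The base case $n=1$ is just the observation that the condition for $J=\{1\}$ yields a nonzero vector in $A_1$, which is a basis of the $1$-dimensional $V$. For the inductive step, assume the Rado condition holds and split into two cases according to whether there is a proper ``tight'' subsystem.

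\emph{Case 1: no tight proper subsystem.} Suppose every nonempty proper $J \subsetneq \{1,\dots,n\}$ satisfies the strict inequality $\dim \operatorname{span}(\bigcup_{j \in J} A_j) > |J|$. Pick any nonzero $v_n \in A_n$ (which exists by the condition at $J=\{n\}$), let $\bar V := V/\langle v_n \rangle$, and let $\bar A_j$ denote the image of $A_j$ in $\bar V$. For every $J \subseteq \{1,\dots,n-1\}$ one has
\[\dim_{\bar V} \operatorname{span}\bigl(\bigcup_{j \in J} \bar A_j\bigr) \geq \dim_V \operatorname{span}\bigl(\bigcup_{j \in J} A_j\bigr) - 1 \geq |J|,\]
either by the strict inequality (if $J \neq \emptyset$) or trivially. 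Applying the inductive hypothesis inside $\bar V$ (which has dimension $n-1$) yields representatives $\bar v_j \in \bar A_j$ forming a basis of $\bar V$; lifting these to $v_j \in A_j$ and adjoining $v_n$ produces the desired basis.

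\emph{Case 2: a tight proper subsystem exists.} After relabeling we may assume $\dim \operatorname{span}(\bigcup_{j=1}^k A_j) = k$ for some $1 \le k < n$; call this span $U$. The Rado condition for the system $A_1,\dots,A_k$ inside $U$ is inherited from the original condition, so induction gives a basis $v_1,\dots,v_k$ of $U$ with $v_j \in A_j$. Passing to the quotient $V/U$ (of dimension $n-k$), one checks the Rado condition for $\bar A_{k+1},\dots,\bar A_n$: for $J' \subseteq \{k+1,\dots,n\}$,
\[\dim_{V/U} \operatorname{span}\bigl(\bigcup_{j \in J'} \bar A_j\bigr) = \dim_V \operatorname{span}\bigl(\bigcup_{j \in \{1,\dots,k\} \cup J'} A_j\bigr) - k \geq (k+|J'|) - k = |J'|.\]
By induction in $V/U$ we obtain independent $\bar v_{k+1},\dots,\bar v_n$, and lifts $v_{k+1},\dots,v_n \in A_{k+1},\dots,A_n$ combine with $v_1,\dots,v_k$ to give a basis of $V$ (the first $k$ span $U$; the rest are independent modulo $U$).

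The only step that is at all delicate is verifying, in each case, that the Rado condition descends either to the quotient by $\langle v_n \rangle$ or to the pair $(U, V/U)$ — but both verifications are one-line rank computations, so I do not expect a genuine obstacle. The real content is the dichotomy between ``tight'' and ``slack'' subsystems, which is the standard device in Rado/Hall-type arguments.
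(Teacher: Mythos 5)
The paper states this as a cited result (Rado's 1942 theorem on independent transversals) and supplies no proof, so there is nothing internal to compare your argument against. Your proof is correct: the easy direction is verified directly, and the hard direction runs the standard tight/slack dichotomy for Hall/Rado-type theorems. Both quotient verifications are carried out and are sound — in Case 1, for $J \subseteq \{1,\dots,n-1\}$ nonempty you correctly use that $J$ is automatically a proper subset of $\{1,\dots,n\}$ so the strict inequality applies, and the rank drop under quotienting by a line is at most one; in Case 2, the identity $\dim_{V/U}\operatorname{span}\bigl(\bigcup_{j\in J'}\bar A_j\bigr)=\dim_V\operatorname{span}\bigl(\bigcup_{j\in\{1,\dots,k\}\cup J'}A_j\bigr)-k$ is just $\dim((S+U)/U)=\dim(S+U)-\dim U$, and the index sets $U$ and $V/U$ each have dimension strictly less than $n$ because $1\le k<n$, so both inductive invocations are legitimate. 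The dichotomy is exhaustive since the Rado hypothesis rules out strict deficiency. This is a complete and correct proof.
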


For a subspace $L \leq \mathbb{C}^n$, we denote as usual by $L^\perp$ the subspace of the dual space defined as $$L^\perp:=\{\phi \in (\mathbb{C}^n)^\vee \mid \phi(z)=0 \, \forall z \in L \}.$$ For a polytope $P \subseteq (\mathbb{C}^n)^\vee$, we denote by $v(P)$ the finite set of its vertices (its 0-dimensional faces).

\begin{lem}\label{radorotundity}
	Let $L \leq \mathbb{C}^n$ be a linear space of dimension $d$, and $W \subseteq (\ctimes)^n$ a variety of codimension $d$ defined by polynomials $f_1,\dots,f_d$ with Newton polytopes $P_1,\dots,P_d$. Let $\pi_{L^\perp}:(\mathbb{C}^n)^\vee \twoheadrightarrow (\mathbb{C}^n)^\vee / L^\perp$ denote the projection.
	
	Then the variety $L \times W$ is rotund if and only if $\pi_{L^\perp}(v(P_1)), \dots \pi_{L^\perp}(v(P_d))$ satisfy the Rado property.
\end{lem}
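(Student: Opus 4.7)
The plan is to set up a correspondence between subsets $J \subseteq \{1,\ldots,d\}$ and $\mathbb{Q}$-linear subspaces $Q \subseteq \mathbb{C}^n$, namely $J \leftrightarrow Q := M_J^\perp$, where $M_J := \textnormal{span}(\bigcup_{j \in J} v(P_j)) \subseteq (\mathbb{C}^n)^\vee$. Since the $v(P_j)$ are integer vectors, $M_J$ is $\mathbb{Q}$-rational and so is $Q$, making $Q$ a valid test subspace in Definition \ref{freerotund}. Write $\pi_Q$ for the quotient map there and decompose it factorwise as $\pi_Q = \pi_Q^{(1)} \times \pi_Q^{(2)}$, with $\pi_Q^{(1)} : \mathbb{C}^n \twoheadrightarrow \mathbb{C}^n/Q$ and $\pi_Q^{(2)} : (\ctimes)^n \twoheadrightarrow (\ctimes)^n/\exp(Q)$. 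By Rado's theorem (Theorem \ref{rado}), the Rado property for the sets $\pi_{L^\perp}(v(P_j))$ is equivalent to the inequalities $\dim \pi_{L^\perp}(M_J) \geq |J|$, equivalently $\dim(M_J + L^\perp) \geq |J|+(n-d)$, holding for every $J$; the target is to show that this is precisely the rotundity inequality tested at $Q = M_J^\perp$.

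For $(\Rightarrow)$ I argue contrapositively. Suppose Rado fails at some $J$ and set $Q := M_J^\perp$. A direct computation using $\dim(L \cap Q) = n - \dim(L^\perp + M_J)$ gives $\dim \pi_Q^{(1)}(L) = d - \dim(L \cap Q) \leq |J|-1$. For the other factor: for each $j \in J$, $v(P_j) \subseteq M_J = Q^\perp$, so every monomial of $f_j$ has exponent in $Q^\perp$; hence $f_j$ is invariant under translation by $\exp(Q)$ and descends to a Laurent polynomial on the quotient torus $T := (\ctimes)^n/\exp(Q)$. Invoking the complete intersection structure of $W$, which may be assumed via the Rabinowitsch trick used earlier in the paper, these $|J|$ descended equations cut out a subvariety of $T$ of codimension exactly $|J|$ containing $\pi_Q^{(2)}(W)$, so $\dim \pi_Q^{(2)}(W) \leq (n - \dim Q) - |J|$. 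Summing the two bounds yields $\dim \pi_Q(L \times W) \leq n - \dim Q - 1$, contradicting rotundity.

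For $(\Leftarrow)$ the cleanest route is via the $\delta$-map of $L \times W$. By Proposition \ref{openrot} it is enough to exhibit one point at which $\delta$ is open, and by the Open Mapping Theorem (Theorem \ref{remop}) this reduces to finding an isolated zero of the system of $d$ exponential sum equations obtained by restricting the $f_j \circ \exp$ to $L \cong \mathbb{C}^d$, whose Newton polytopes in $(\mathbb{C}^n)^\vee/L^\perp$ are exactly the $\pi_{L^\perp}(P_j)$. A mixed-volume computation identifies the Rado property on vertices with positivity of $\mv(\pi_{L^\perp}(P_1), \ldots, \pi_{L^\perp}(P_d))$, which is the standard combinatorial criterion for a generic system of this form to possess an isolated solution. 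The main obstacle will be passing from ``generic coefficients'' to the specific $f_j$ defining $W$; this is handled either by freely switching generating sets of $I(W)$ (permitted by the complete intersection assumption) or by a direct tropical-geometric argument of the kind developed later in the paper, and is eased by the implicit additive freeness hypothesis, which ensures that the only subspaces $Q$ relevant for rotundity are those of the form $M_J^\perp$, closing the equivalence.
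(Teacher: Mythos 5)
Your $(\Rightarrow)$ direction is essentially the paper's: you take the same test subspace $Q = M_J^\perp$ (the paper writes $S^\perp$ with $S$ the span of the relevant polytopes), use the same duality identity $\dim\pi_{S^\perp}(L)=\dim\pi_{L^\perp}(S)$, and bound $\dim\pi_{\exp(Q)}(W)$ by observing that the $f_j$ with $j\in J$ descend to the quotient torus.

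Your $(\Leftarrow)$ direction is where the proposal breaks down. The paper's proof is purely linear-algebraic: given an \emph{arbitrary} rational subspace $S^\perp$, it sets $k$ so that $\dim\pi_{\exp(S^\perp)}(W)=\dim S-k$, arranges (by passing to a different generating set of $I(W)$) that $P_1,\dots,P_k\subseteq S$, and feeds the Rado hypothesis through the duality identity to get $\dim\pi_{S^\perp}(L)\geq k$, closing the inequality. You instead try to deduce rotundity from Proposition \ref{openrot} by producing a point where the $\delta$-map is open, justified by a ``standard combinatorial criterion'' identifying positivity of the mixed volume of the projected Newton polytopes with existence of an isolated zero of the restricted system $f_j\circ\exp|_L$. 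That criterion (Bernstein--Kushnirenko) is standard for Laurent polynomial systems; but when $L$ is not defined over $\mathbb{Q}$ the exponents of the restricted system are genuinely complex, and the corresponding solvability statement for such exponential-sum systems is not a standard tool --- it is essentially the content of Theorem \ref{chap3main}. Moreover the ingredient you would need in the non-real case, Kazarnovskii's pseudo-mixed volume (Section \ref{comptro}), is related to rotundity via Lemma \ref{complexstint}, which itself cites Lemma \ref{radorotundity}. Your own remark that the remaining gap is ``handled \dots by a direct tropical-geometric argument of the kind developed later in the paper'' makes this circularity explicit.

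There is a second concrete error: you invoke an ``implicit additive freeness hypothesis'' to restrict the rotundity check to subspaces of the form $M_J^\perp$. No freeness hypothesis appears in the statement of Lemma \ref{radorotundity}, and in any case rotundity (Definition \ref{freerotund}) quantifies over \emph{all} $\mathbb{Q}$-linear subspaces, not merely annihilators of spans of the given Newton polytopes. Reducing to the latter would require a separate argument which you do not supply; the paper's $(\Leftarrow)$ direction instead works directly with an arbitrary rational $S^\perp$.
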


\begin{proof}
    $(\Rightarrow)$ Suppose the projections of the vertex sets do not satisfy the Rado property. Then by Theorem \ref{rado}, after renumbering the polytopes there is $k\leq d$ such that $\pi_{L^\perp}(P_1),\dots,\pi_{L^\perp}(P_k)$ are contained in a subpace $T \leq (\mathbb{C}^n)^\vee/L^\perp$ of dimension $\dim T <k$. We may take $T$ to be $\pi_{L^\perp}(S)$, with $S$ the span of $P_1,\dots,P_k$ in $(\mathbb{C}^n)^\vee$. Since all the sets $v(P_j)$ are sets of integer vectors, $S$ is defined over $\mathbb{Q}$.
	
	Therefore, as $\dim (\pi_{L^\perp}(S)) < k$, we must have $\dim S - \dim (L^\perp \cap S) <k$. As is well-known, taking the annihilator of subspaces in $(\mathbb{C}^n)^\vee$ yields spaces which are isomorphic to spaces in $\mathbb{C}^n$ - namely, we canonically have that $(L^\perp)^\perp \cong L$ and $S^\perp$ is isomorphic to (and will then be identified with) a rational subspace of $\mathbb{C}^n$. Under these identifications we have $(L^\perp \cap S)^\perp= L + S^\perp$. Then we have 
	
	$$\dim(\pi_{S^\perp}(L))=\dim L - \dim (L \cap S^\perp)=$$ $$=n-\dim L^\perp - (n-\dim (L^\perp +S))=$$ $$=\dim L^\perp + \dim S - \dim (L^\perp \cap S)-\dim L^\perp=$$ $$=\dim S - \dim (L^\perp \cap S)=\dim(\pi_{L^\perp}(S)).$$
	
	Hence we have $\dim(\pi_{S^\perp}(L))=\dim(\pi_{L^\perp}(S))<k$.
	
	Consider now the algebraic subgroup $\exp(S^\perp)$ of $(\ctimes)^n$. Since $S$ is the span of polytopes $P_1,\dots,P_k$, the quotient of the variety $W$ under $\exp(S^\perp)$ must have codimension at least $k$, as the quotient of each of the hypersurfaces cut out by $f_1,\dots,f_k$ is still a hypersurface (cut out by a polynomial with Newton polytope $Q_j$ for which $\pi_{S^\perp}^\vee(Q_j)=P_j$, where $\pi_{S^\perp}^\vee:S \rightarrow (\mathbb{C}^n)^\vee$ is the transpose of the linear map $\pi_{S^\perp}$). Therefore, $$\dim(\pi_{\exp(S^\perp)}(W)) \leq \dim S -k.$$ 
	
	Hence, we find that there is a rational subspace $S^\perp \leq \mathbb{C}^n$ such that $$\dim(\pi_{S^\perp}(L)) + \dim(\pi_{\exp(S^\perp)}(W)) < k+\dim S -k=n-\dim S^\perp$$ which contradicts the definition of rotundity. Hence if the polytope projections do not satisfy the Rado property then $L \times W$ is not rotund, establishing one direction of the lemma.
	
    $(\Leftarrow)$ Now assume the projections of the vertex sets of the polytopes satisfy the Rado property, and let $S^\perp \leq \mathbb{C}^n$ be the annihilator of some rational subspace $S$ of $(\mathbb{C}^n)^\vee$ (we introduce $S^\perp$ as an annihilator rather than by itself to maintain consistency in the proof - this way, in both directions $S^\perp$ is a subspace of $\mathbb{C}^n$ and $S$ of $(\mathbb{C}^n)^\vee$). Consider the variety $\pi_{\exp(S^\perp)}(W)$: this has dimension $\dim S - k$ for some $k \geq 0$. 
	
    Replacing, if necessary, the polynomials $f_1,\dots,f_k$ by other elements of the ideal of polynomials which vanish on $W$ we may assume without loss of generality that $f_1,\dots,f_k$ define the variety $W \cdot \exp(S^\perp)$, and hence that the polytopes $P_1,\dots,P_k$ are contained in $S$. Hence, by the Rado property of the sets $\pi_{L^\perp}(v(P_1)),\dots,\pi_{L^\perp}(v(P_k))$, we have $\dim({\pi_{L^\perp}(S)}) \geq k$. 
	
	We have proved above that $\dim (\pi_{L^\perp}(S))=\dim (\pi_{S^\perp}(L))$, and as a consequence $\dim(\pi_{S^\perp}(L)) \geq k$. Thus, $$\dim \pi_{S^\perp}(L)+ \dim \pi_{\exp(S^\perp)}(W) \geq k+\dim S-k =\dim S=n-\dim S^\perp.$$ As $S^\perp$ was arbitrary, this establishes rotundity of $L \times W$. 
\end{proof}

\begin{coro}\label{cororot}
	Let $L \times W \subseteq \mathbb{C}^n \times (\ctimes)^n$ be a variety with $L$ linear and $\dim L + \dim W =n$. 
	
	Then $L \times W$ is rotund if and only if the Newton polytopes of the associated system of exponential sums satisfy the Rado property. If $L$ is defined over the reals, this is equivalent to the polytope having non-zero mixed volume.
\end{coro}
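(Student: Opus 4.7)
The first equivalence reduces to Lemma \ref{radorotundity}. Under the hypothesis $\dim L + \dim W = n$ we have $\dim L = \codim W = d$, matching the setup of that lemma. Fixing defining polynomials $f_1,\dots,f_d$ for $W$ with Newton polytopes $P_1,\dots,P_d$, the lemma characterises rotundity of $L \times W$ by the Rado property of the projected vertex sets $\pi_{L^\perp}(v(P_1)),\dots,\pi_{L^\perp}(v(P_d))$ inside $(\mathbb{C}^n)^\vee/L^\perp$. The associated system contains in addition the $n-d$ equations $\exp(\phi_{\lambda_i})=1$ coming from $L$, whose Newton polytopes are the segments $Q_i=\conv\{0,\phi_{\lambda_i}\}\subseteq L^\perp$. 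A short linear-algebra check shows that the full collection $\{Q_1,\dots,Q_{n-d},P_1,\dots,P_d\}$ in $(\mathbb{C}^n)^\vee$ satisfies Rado if and only if the projected collection does: any basis witness for the full system has to pick $\phi_{\lambda_i}$ rather than $0$ from $v(Q_i)$ (a basis cannot contain $0$), and since the $\phi_{\lambda_i}$'s are linearly independent and so span $L^\perp$, the accompanying choices $v_j\in v(P_j)$ must project to a basis of $(\mathbb{C}^n)^\vee/L^\perp$; conversely, any projected Rado witness joined with $\phi_{\lambda_1},\dots,\phi_{\lambda_{n-d}}$ assembles into a basis of $(\mathbb{C}^n)^\vee$. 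Combined with Lemma \ref{radorotundity}, this yields the first equivalence.

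For the second equivalence, assume $L$ is defined over $\mathbb{R}$; then $L^\perp$ is defined over $\mathbb{R}$ and the $\phi_{\lambda_i}$ can be chosen with real coefficients, so every Newton polytope of the associated system lies inside $(\mathbb{R}^n)^\vee\cong\mathbb{R}^n$, where mixed volume makes sense. The plan is to reduce to the case in which every polytope of the system contains the origin as a vertex. This is achieved by a Rabinowitsch-style shift: multiplying each $f_j$ by a suitable Laurent monomial translates $P_j$ by an integer vector, does not change the variety cut out in $(\ctimes)^n$ (hence leaves rotundity untouched), and leaves the mixed volume invariant (mixed volume is translation-invariant). Once each polytope $R$ in the system satisfies $0\in v(R)$, the linear span of $v(R)$ agrees with the linear hull of $R$, so for every non-empty $I\subseteq\{1,\dots,n\}$ one has
$$\dim \textnormal{span}\biggl(\bigcup_{i\in I} v(R_i)\biggr) \;=\; \dim \sum_{i\in I} R_i.$$
By the classical Minkowski criterion, positivity of $\mv(R_1,\dots,R_n)$ is equivalent to the right-hand side being $\geq |I|$ for every such $I$, and by Rado's Theorem \ref{rado} the Rado property is equivalent to the left-hand side being $\geq |I|$ for every such $I$. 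The two conditions coincide, giving mixed-volume positivity equivalent to Rado.

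The main subtlety I expect is the Rabinowitsch normalization: without requiring $0$ to be a vertex of every polytope, the Rado property (which sees the full linear span of each vertex set) and mixed-volume positivity (which sees only the span of Minkowski differences) can genuinely disagree, for instance when a vertex set is a single non-zero point. In our setting this discrepancy disappears because each $f_j$ necessarily has at least two monomials (otherwise $f_j=0$ would cut out the empty set in $(\ctimes)^n$) and each $Q_i$ is a genuine segment, so after the monomial shift every polytope has at least two vertices, one of which is $0$. The other technicality — relating Rado on the full $n$-polytope system to Rado on the projected $d$-polytope system of Lemma \ref{radorotundity} — is immediate from the very special form of the $L$-polytopes $Q_i$, as sketched in the first paragraph.
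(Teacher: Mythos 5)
Your proof is correct and takes essentially the same route as the paper's, but spells out two steps that the paper compresses into single citations. The paper disposes of the first equivalence as "an immediate consequence of Lemma~\ref{radorotundity}" and of the second by citing~\autocite[Lemma~4.6.6]{MS}; you instead supply the bridging arguments explicitly, which is worthwhile because neither step is literally immediate from the cited results.

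For the first equivalence, Lemma~\ref{radorotundity} characterises rotundity via the Rado property of the $d$ projected vertex sets in $(\mathbb{C}^n)^\vee/L^\perp$, whereas the corollary asserts the Rado property of all $n$ Newton polytopes in $(\mathbb{C}^n)^\vee$. Your observation that a basis cannot use the vertex $0$ of any $L$-segment $Q_i$, so it must pick each $\phi_{\lambda_i}$ (which together span $L^\perp$), forcing the remaining choices from the $P_j$'s to project to a basis of the quotient, is exactly the right linear-algebra link, and the converse direction is equally clean.

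For the second equivalence, your monomial normalisation ensuring $0 \in v(R)$ for every polytope of the system is a tidy way to make the span of the vertex set coincide with the affine hull of the Minkowski sum, after which Rado's Theorem~\ref{rado} and the Minkowski positivity criterion (which appears in the paper via Theorem~\ref{bern1}, i.e.~\autocite[Theorem~4.6.9]{MS}) line up term by term. One small point worth flagging to yourself: the Rado property of the full polytope collection is not in general invariant under independent translations of the $P_j$'s, so your normalisation could in principle alter it. Your argument sidesteps this correctly, because you compare the \emph{normalised} polytopes' Rado property to mixed volume (translation-invariant) and to rotundity (unchanged, since multiplying $f_j$ by a monomial does not change $W$), and then transfer back via part~(a); but it would be worth stating this routing explicitly, since at first glance the translation step looks like it could disturb the very property you are trying to characterise.
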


\begin{proof}
	The first assertion is an immediate consequence of Lemma \ref{radorotundity}. For the second one apply \autocite[Lemma~4.6.6]{MS}, which ties mixed volume to the Rado property.
\end{proof}

\section{Raising to Real Powers}\label{rtpsec}

In this section we establish the main result of the paper for varieties of the form $L \times W$, where $L$ is defined over the real numbers; we are going to do so using a density property of sets of the form $\exp(L)$. Throughout this section, even when not explicitly stated, we assume that the space $L$ is defined over $\mathbb{R}$. 

Recall that $\mathbb{S}_1$ denotes the unit circle $\{z \in \mathbb{C} \mid |z|=1 \}$.

\begin{prop}
	Let $L \leq \mathbb{C}^n$ be an $\mathbb{R}$-linear space that is not contained in any $\mathbb{Q}$-linear space. Then $\exp(L)$ is dense in $\exp(L) \cdot \mathbb{S}_1^n$.
\end{prop}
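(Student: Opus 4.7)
The plan is to reduce the density statement to a Kronecker-type density theorem on the real torus. Since $L$ is defined over $\mathbb{R}$, we have the decomposition $L = L_0 \oplus iL_0$, where $L_0 := L \cap \mathbb{R}^n$ is an $\mathbb{R}$-linear subspace of $\mathbb{R}^n$. For $z = x + iy \in L$ with $x,y \in L_0$, the tuple $\exp(z)$ has modulus vector $\exp(x) \in \exp(L_0)$ and argument vector $y \bmod 2\pi$. Hence $\exp(L) \cdot \mathbb{S}_1^n$ is the set of points in $(\ctimes)^n$ whose modulus vector lies in $\exp(L_0)$ and whose argument vector is arbitrary, while $\exp(L)$ has the same set of possible moduli but arguments restricted to the image of $L_0$ in the torus $(\mathbb{R}/2\pi\mathbb{Z})^n$. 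Since the moduli $\exp(x)$ can be chosen freely in $\exp(L_0)$ independently of $y$, the statement reduces to showing that the image of $L_0$ in $(\mathbb{R}/2\pi\mathbb{Z})^n$ is dense.

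Next I would argue via closed subgroups of the torus. The image of $L_0$ in $\mathbb{T} := \mathbb{R}^n/2\pi\mathbb{Z}^n$ is connected, so its closure $T$ is a connected closed subgroup of $\mathbb{T}$, i.e.\ a subtorus. By Pontryagin duality, $T = \mathbb{T}$ if and only if its character annihilator vanishes, where the annihilator is
\[ T^\perp = \{ k \in \mathbb{Z}^n : k \cdot y \in 2\pi\mathbb{Z} \text{ for all } y \in L_0 \}.\]
Because $L_0$ is closed under multiplication by arbitrary real scalars, the condition $k \cdot L_0 \subseteq 2\pi\mathbb{Z}$ forces $k \cdot L_0 = 0$; hence $T^\perp = L_0^\perp \cap \mathbb{Z}^n$, where $L_0^\perp \subseteq \mathbb{R}^n$ is the usual $\mathbb{R}$-orthogonal.

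It remains to connect the hypothesis to this annihilator. A nonzero element $k \in L_0^\perp \cap \mathbb{Z}^n$ would give a nonzero integral linear functional vanishing on $L_0$, and, by $\mathbb{C}$-linear extension (noting $\phi_k(iy) = i \phi_k(y) = 0$), vanishing on $L = L_0 \oplus iL_0$. Its kernel is then a proper $\mathbb{Q}$-linear hyperplane of $\mathbb{C}^n$ containing $L$, contradicting the hypothesis. Conversely, if $L$ were contained in some proper $\mathbb{Q}$-linear subspace, one could find a rational (hence, after clearing denominators, integral) linear functional vanishing on it and thus on $L_0$. Therefore the hypothesis is equivalent to $L_0^\perp \cap \mathbb{Z}^n = 0$, which gives $T = \mathbb{T}$ and concludes the proof.

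I do not anticipate any serious obstacle: the proof is a routine application of the structure theorem for closed subgroups of a torus, together with a careful verification that the complex hypothesis on $L$ corresponds precisely to the real hypothesis on $L_0$. The only mildly delicate point is to exploit that $L_0$ is an $\mathbb{R}$-subspace, not merely a $\mathbb{Z}$-module, to collapse the condition $k \cdot L_0 \subseteq 2\pi\mathbb{Z}$ to $k \cdot L_0 = 0$.
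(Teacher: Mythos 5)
Your proof is correct and follows essentially the same route as the paper: decompose $L$ as $L_0\oplus iL_0$ (the paper writes $\Re(L)+i\Re(L)$), observe that the modulus factor $\exp(L_0)$ coincides for $\exp(L)$ and $\exp(L)\cdot\mathbb{S}_1^n$, and reduce to the density of the image of $L_0$ in the torus $(\mathbb{R}/2\pi\mathbb{Z})^n$, which holds because $L_0$ is not contained in any rational subspace. The only difference is that the paper simply asserts the torus density from the non-rationality of $\Re(L)$, whereas you supply the underlying justification explicitly (the closure is a subtorus, and Pontryagin duality plus $\mathbb{R}$-homogeneity of $L_0$ collapses the annihilator condition $k\cdot L_0\subseteq 2\pi\mathbb{Z}$ to $k\cdot L_0=0$, which the hypothesis forbids). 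This is a welcome level of detail but not a different method.
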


\begin{proof}
	Let $\Re(L)$ denote the set $\{\Re(l) \in \mathbb{R}^n \mid l \in L \}$. Since $L$ is $\mathbb{R}$-linear, we have $L=\Re(L)+i\Re(L)$.	
	
	Since $L$ is not contained in any $\mathbb{Q}$-linear subspace of $\mathbb{C}^n$, $\Re(L)$ is not contained in any $\mathbb{Q}$-linear subspace of $\mathbb{R}^n$. Therefore, $\exp(i\Re(L))$ is dense in $\mathbb{S}_1^n$: to see this, consider $\mathbb{S}_1^n$ as $\mathbb{R}^n/2\pi  \mathbb{Z}^n$, and $\exp(i\Re(L))$ as $\Re(L)+2\pi  \mathbb{Z}^n$: as $\Re(L)$ is not contained in any rational subspace, $\Re(L) + 2\pi\mathbb{Z}^n$ is not contained in any proper closed subgroup of $\mathbb{R}^n/2\pi\mathbb{Z}^n$, and is therefore dense; hence, $\exp(i\Re(L))$ is dense in $\mathbb{S}_1^n$. 

    Hence given a neighbourhood $U$ of the identity in $\exp(L) \cdot \mathbb{S}_1^n$, it contains points of $\exp(i\Re(L)) \subseteq \exp(L)$. As $\exp(L) \cdot \mathbb{S}_1^n$ is a group this suffices to prove density.
\end{proof}

The set $\exp(L) \cdot \mathbb{S}_1^n$ can easily be related to the material discussed in the previous section.

\begin{prop}\label{realpart}
	Let $L$ be an $\mathbb{R}$-linear subspace of $\mathbb{C}^n$. Then:
	
	\begin{itemize}
		\item[1.] $\exp(L) \cdot \mathbb{S}_1^n=\Log^{-1}(\Re(L))$.
		\item[2.] If $W$ is an algebraic subvariety of $(\ctimes)^n$, then $\am \cap \Re(L) \neq \varnothing$ if and only if $W \cap \exp(L) \cdot \mathbb{S}_1^n \neq \varnothing$.
	\end{itemize}
	
\end{prop}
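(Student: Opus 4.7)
The plan is to prove part (1) by a direct double inclusion, then deduce part (2) by simply applying $\Log$ and using the equivalence from (1).

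For part (1), the key observation is that because $L$ is $\mathbb{R}$-linear, we have $\Re(L) \subseteq L$: indeed $\Re(L) \subseteq \mathbb{R}^n \subseteq \mathbb{C}^n$, and any real vector in $L$ is its own real part, so $\Re(L)$ sits inside $L$ as a real subspace. For the inclusion $\exp(L)\cdot \mathbb{S}_1^n \subseteq \Log^{-1}(\Re(L))$, I would take a point of the form $w = \exp(l)\cdot s$ with $l \in L$ and $s \in \mathbb{S}_1^n$, and compute coordinatewise $|w_j| = |\exp(l_j)|\cdot|s_j| = e^{\Re(l_j)}$, so that $\Log(w) = \Re(l) \in \Re(L)$. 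For the reverse inclusion, given $w$ with $\Log(w) \in \Re(L)$, I would set $x := \Log(w)$; by the observation above $x \in L$, and defining $s_j := w_j/|w_j| \in \mathbb{S}_1$ gives $w = \exp(x)\cdot s \in \exp(L)\cdot \mathbb{S}_1^n$, since $\exp(x_j) = e^{\log|w_j|} = |w_j|$.

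For part (2), by definition $\am = \Log(W)$, so $\am \cap \Re(L) \neq \varnothing$ is equivalent to the existence of $w \in W$ with $\Log(w) \in \Re(L)$, i.e.\ $w \in W \cap \Log^{-1}(\Re(L))$. By part (1), $\Log^{-1}(\Re(L)) = \exp(L)\cdot \mathbb{S}_1^n$, so this is equivalent to $W \cap \exp(L)\cdot \mathbb{S}_1^n \neq \varnothing$.

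There is no real obstacle here; the whole content of the proposition is the observation that $\R$-linearity of $L$ forces $\Re(L) \subseteq L$, which lets the modulus information captured by $\Log$ be lifted to a genuine element of $L$, after absorbing the phase into the torus factor $\mathbb{S}_1^n$. The only thing to be careful about is not to conflate $L$ (a complex subspace of $\mathbb{C}^n$) with $\Re(L)$ (a real subspace of $\mathbb{R}^n$) in the computation; in particular one should not need the $\mathbb{Q}$-genericity hypothesis of the preceding proposition, so the proof should make no appeal to density statements.
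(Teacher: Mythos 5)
Your proof is correct and takes essentially the same approach as the paper: part (1) by the coordinatewise modulus computation, part (2) by unwinding definitions. One small remark: the inclusion $\Re(L)\subseteq L$ is not actually needed. Given $\Log(w)\in\Re(L)$, by definition there is some $l\in L$ with $\Re(l)=\Log(w)$; then $|w_j/\exp(l_j)|=|w_j|/e^{\Re(l_j)}=1$, so $w=\exp(l)\cdot(w/\exp(l))\in\exp(L)\cdot\mathbb{S}_1^n$ directly. This is how the paper phrases it, and it shows part (1) holds for an arbitrary complex subspace $L$, not just an $\mathbb{R}$-linear one; your version makes a stronger (but legitimate and easy) use of $\mathbb{R}$-linearity than is strictly necessary. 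Deducing part (2) from part (1) rather than re-running the parallel argument, as the paper does, is a slightly cleaner presentation.
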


\begin{proof}
	Part 1 is straightforward: $w \in \exp(L) \cdot \mathbb{S}_1^n$ if and only if there is $l \in L$ such that $$\left( |w_1|, \dots, |w_n| \right)=\left( |\exp(l_1)|, \dots, |\exp(l_n)|\right)=\left( e^{\Re(l_1)}, \dots, e^{\Re(l_n)} \right)$$ if and only if $\Log(w) \in \Re(L)$.
	
	Part 2 follows by a similar argument: both statements are true if and only if there is $(l, w) \in L \times W$ such that $\frac{w}{\exp(l)} \in \mathbb{S}_1^n$.
\end{proof}

We will now see that thanks to Lemma \ref{reduction 1} it is sufficient to intersect $\am$ and $\Re(L)$ to find an intersection between $\exp(L)$ and $W$.

Let $(\ast)$ denote the following assumption: 
\begin{center}
    For every additively free rotund variety of the form $L \times W$, $\Re(L) \cap \am \neq \varnothing$.
\end{center}

\begin{lem}\label{red2}
	Suppose assumption $(\ast)$ holds.
	
	Then for every additively free rotund variety of the form $L \times W$, $\exp(L) \cap W \neq \varnothing$.
\end{lem}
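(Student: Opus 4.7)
My plan is to combine the Rabinowitsch-type reduction of Lemma \ref{reduction 1} with the density of $\exp(L)$ in $\exp(L) \cdot \mathbb{S}_1^n$ proved at the start of this section. I would first apply Lemma \ref{reduction 1} to replace $L \times W$ by an additively free, rotund variety $L' \times W' \subseteq \mathbb{C}^{n+1} \times (\ctimes)^{n+1}$ whose $\delta$-map is open on the whole domain, so that $\delta(L' \times W')$ is an open subset of $(\ctimes)^{n+1}$. Because the reduction takes $L' = L \times \mathbb{C}$, the subspace $L'$ remains $\mathbb{R}$-defined and still fails to lie in any $\mathbb{Q}$-linear subspace (any such subspace containing $L'$ would have to contain $L$), so the density proposition continues to apply: $\exp(L')$ is dense in $\exp(L') \cdot \mathbb{S}_1^{n+1}$. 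Since any point in $\exp(L') \cap W'$ projects to a point in $\exp(L) \cap W$, it suffices to find the former.

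Next, applying the hypothesis $(\ast)$ to $L' \times W'$ together with Proposition \ref{realpart}(2), I obtain a point $(l_0, w_0) \in L' \times W'$ whose $\delta$-image $s := w_0/\exp(l_0)$ lies in $\mathbb{S}_1^{n+1}$, so $\delta(L' \times W')$ is an open set containing $s$. The crucial observation is that $\delta(L' \times W')$ is invariant under multiplication by $\exp(L')$: for $(l, w) \in L' \times W'$ and $l' \in L'$ one has $\delta(l, w) \cdot \exp(l') = w/\exp(l - l') = \delta(l - l', w)$, and $l - l' \in L'$ since $L'$ is linear.

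Pick an open neighbourhood $U \subseteq \delta(L' \times W')$ of $s$. Since $s \in \mathbb{S}_1^{n+1} \subseteq \exp(L') \cdot \mathbb{S}_1^{n+1}$, the intersection $U \cap \exp(L') \cdot \mathbb{S}_1^{n+1}$ is a non-empty open subset of $\exp(L') \cdot \mathbb{S}_1^{n+1}$, and so by the density statement it contains a point $\exp(l'')$ for some $l'' \in L'$. Then $\exp(l'') \in \delta(L' \times W')$, and by invariance $1 = \exp(l'') \cdot \exp(-l'') \in \delta(L' \times W')$, giving $(l, w) \in L' \times W'$ with $w = \exp(l)$ as required. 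I do not expect a serious obstacle: once one recognises that $\delta(L' \times W')$ is an open, $\exp(L')$-invariant set already known to meet $\mathbb{S}_1^{n+1}$, the argument is bookkeeping. The only point needing care is that Lemma \ref{reduction 1} preserves the hypotheses of the density proposition, which is immediate from the shape $L' = L \times \mathbb{C}$.
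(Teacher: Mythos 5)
Your proof is correct and follows essentially the same approach as the paper: reduce to the case of a globally open $\delta$-map via Lemma \ref{reduction 1}, use $(\ast)$ with Proposition \ref{realpart} to get $\im(\delta) \cap \mathbb{S}_1^{n+1} \neq \varnothing$, and then exploit density of $\exp(L')$ in $\exp(L') \cdot \mathbb{S}_1^{n+1}$ together with openness of $\im(\delta)$ to land a point of $\exp(L')$ in $\im(\delta)$, whence $1 \in \im(\delta)$ and $\exp(L') \cap W' \neq \varnothing$. The only cosmetic variation is that you make the $\exp(L')$-invariance of $\im(\delta)$ explicit and use it to pull $\exp(l'')$ back to $1$, whereas the paper reaches the same conclusion by observing directly that $w/\exp(l) \in \exp(i\Re(L)) \leq \exp(L)$ forces $w \in \exp(L)$.
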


\begin{proof}
	By Lemma \ref{reduction 1}, we can assume without loss of generality that the $\delta$-map of $L \times W$ is open. By Proposition \ref{realpart} and the assumption $(\ast)$, we know that $\exp(L) \cdot \mathbb{S}_1^n \cap W \neq \varnothing$. Clearly, this implies that the image $\im(\delta)$ of the $\delta$-map intersects $\mathbb{S}_1^n$. As $\im(\delta)$ is open, this means that actually there is an open subset of $\mathbb{S}_1^n$ contained in $\im(\delta)$: thus, as $\exp(i\Re(L))$ is dense in $\mathbb{S}_1^n$, it must be the case that $\im(\delta) \cap \exp(i\Re(L)) \neq \varnothing$.
	
	Hence there is a point $(l, w) \in L \times W$ such that $\frac{w}{\exp(l)} \in \exp(i\Re(L)) \leq \exp(L)$: this implies that $w \in \exp(L)$.
\end{proof}

Hence, Lemma \ref{red2} means that we only need to prove that the assumption $(\ast)$ holds to establish the result.

\begin{rem}
	The reader who is familiar with \cite{K19} will notice that assumption $(\ast)$ says, in the language of that paper, that all additively free rotund varieties $L \times W$ intersect the \textit{blurring} of $\Gamma_\exp$ by $\mathbb{S}_1^n$, and Lemma \ref{red2} says that intersecting these varieties with the blurred graph is equivalent to intersecting them with the actual graph.
\end{rem}

To intersect $\Re(L)$ and $\am$ we will use a result of Khovanskii to show that given any additively free rotund variety $L \times W$ we can find $W'$ such that $\exp(L) \cap W' \neq \varnothing$ and $\mathcal{A}_{W'}=\am$.

Recall from Definition \ref{exposys} that if $L \leq \mathbb{C}^n$ is a linear space and $W \subseteq (\ctimes)^n$ an algebraic variety, the system of exponential sums attached to $L \times W$ is the system of equations:

$$\begin{cases}
\exp(\lambda_{1,1}z_1) \cdots \exp(\lambda_{1,n}z_n)=1 \\
\vdots\\
\exp(\lambda_{d,1}z_1) \cdots \exp(\lambda_{d,n}z_n)=1 \\
\sum_{j \in S_1}c_j\exp(j \cdot z)=0 \\
\vdots \\
\sum_{j \in S_{n-d}} \exp(j \cdot z)=0 \\
\end{cases}$$ 

We recall the definition of a \textit{coherent set of faces} in a collection of polyhedra, and of a \textit{shortening} of a system. These can be found in \autocite[Section~3.13]{Kov} (the set of faces is called \textit{concordant} there) and in \autocite[Section~6]{Zil02}.

\begin{defn}
	Let $P_1,\dots,P_k \subseteq \mathbb{R}^n$ be convex polytopes. A \textit{coherent set of faces} of $P_1,\dots,P_k$ is a set of polytopes $Q_1,\dots,Q_k$ for which there is $w \in \mathbb{R}^n$ such that $$Q_j=\face_w(P_j)$$ for each $j$.
\end{defn}

As an easy example, let $P_1$ denote the segment in $\mathbb{R}^2$ with vertices $(0,0)$ and $(1,0)$ and $P_2$ the segment in $\mathbb{R}^2$ with vertices $(0,0)$ and $(0,1)$. Then $\{(0,0)\}=\face_{(-1,0)}(P_1)$ and $P_2=\face_{(-1,0)}(P_2)$: so $(0,0)$ and $P_2$ form a coherent set of faces of $P_1,P_2$.

\begin{defn}
	Let $f_1=\dots=f_k=0$ be a system of exponential sums equations. Let, for each $j$, $S_j$ be the set of exponents of the exponential sum $f_j$, $P_j$ its Newton polytope (so $P_j$ is the convex hull of the discrete set $S_j$), and suppose $Q_1,\dots,Q_k$ is a coherent set of faces of $P_1,\dots,P_k$.
	
	The \textit{shortening} of the system associated to $Q_1,\dots,Q_k$ is the system $g_1=\dots=g_k=0$, where $$g_j=\sum_{\phi \in S_j \cap Q_j} c_\phi \exp(\phi(z))$$ for each $j$.
\end{defn}

Note that by definition the Newton polytope of the exponential sum $g_j$ is $Q_j$ for each $j$.

\begin{exa}
	A shortening describes the behaviour of the system of exponential sums as some of the variables approach infinity. As an example, consider the exponential sums $$f_1(z_1,z_2)=\exp(z_1)+\exp(z_2)$$ $$f_2(z_1,z_2)=\exp(z_1)+\exp(z_2)+1.$$
	
	Then $P_1=\conv\{(1,0), (0,1) \}$ and $P_2=\conv \{(1,0), (0,1), (0,0) \}$. $P_1$ then has three faces (the two points $(1,0)$ and $(0,1)$ and $P_1$ itself), which are obtained as $\face_w(P_1)$ for $w$ equal to $(-1,0)$, $(0,-1)$ or $(1,1)$ respectively.
	
	It is easy to see that each of these induces a face of $P_2$: $\face_{(-1,0)}(P_2)=\conv\{(0,0), (0,1)\}$, $\face_{(1,1)}(P_2)=P_1$, and $\face_{(0,1)}(P_2)=\conv\{(0,0), (-1,0) \}$. This means that there are three coherent sets of faces of $P_1$ and $P_2$, and thus the system has three shortenings:
	
	$$\begin{cases}
	\exp(z_1)=0\\
	\exp(z_1)+1=0\\
	\end{cases}$$ and $$\begin{cases}
	\exp(z_2)=0\\
	\exp(z_2)+1=0\\
	\end{cases}$$ which are obviously inconsistent as $0 \notin \im(\exp)$, and $$\begin{cases}
	\exp(z_1)+\exp(z_2)=0 \\
	\exp(z_1)+\exp(z_2)=0 \\
	\end{cases}$$ which defines a curve.
\end{exa}

\begin{defn}
	Let $G \subseteq \mathbb{R}^n$ be an open set. A system of exponential sums is \textit{non-degenerate at infinity} in the domain $\mathbb{R}^n+iG$ if:
	
	\begin{itemize}
		\item[1.] All solutions of the system in the domain are isolated;
		\item[2.] All shortenings of the system do not have solutions in $\mathbb{R}^n + iG$.
	\end{itemize}
\end{defn}

Solvability of non-degenerate at infinity systems of exponential sums has been established long ago. In particular, we have the following result of Zilber.

\begin{thm}[{\autocite[Theorem~4]{Zil02}} and subsequent discussion; see also {\autocite[Theorem~3.13.1]{Kov}}] \label{nondegrtp}
	Let $f_1=\dots=f_n=0$ be a system of exponential sums with exponents in $\mathbb{R}$. If there are arbitrarily large balls $G \subseteq \mathbb{R}^n$ such that the system is non-degenerate at infinity in $\mathbb{R}^n + i G$, then the system has a solution.
\end{thm}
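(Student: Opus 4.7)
The plan is to establish Theorem \ref{nondegrtp} via a topological degree argument, following Khovanskii's approach to fewnomial systems. I view the system as a holomorphic map $F = (f_1,\dots,f_n): \mathbb{R}^n + iG \to \mathbb{C}^n$ and seek zeros by showing that $F$ has nonzero topological degree on a sufficiently large compact region, for $G$ chosen (using the hypothesis of ``arbitrarily large'' balls) to be large enough that all the subsequent control arguments apply.

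The first step is to use non-degeneracy at infinity to show that $F^{-1}(0)$ is compact in $\mathbb{R}^n + iG$. Given a hypothetical sequence of zeros $z_k = x_k + iy_k$ in this strip with $|x_k| \to \infty$, after passing to a subsequence one finds a unit vector $w \in \mathbb{R}^n$ with $x_k/|x_k| \to w$; then for each $j$ the monomials $c_\phi \exp(\phi(z_k))$ with $\phi \in \face_w(P_j) \cap S_j$ dominate the others asymptotically, since $|\exp(\phi(z))| = \exp(\phi(\Re(z)))$ depends only on the real part of $z$. Normalizing $f_j(z_k) = 0$ by the largest-modulus term and passing to a limit, using compactness of $y_k \in G$ and iterating on subfaces if the ``transverse'' part of $x_k$ itself escapes, yields a zero of the shortening $g_1 = \cdots = g_n = 0$ in $\mathbb{R}^n + iG$, contradicting non-degeneracy at infinity.

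Second, once $F^{-1}(0)$ is compact and (by hypothesis) consists of isolated points, a topological Brouwer degree can be attached to $F$ on a large ball containing these zeros, and I want to show it is nonzero. This is a Bernstein--Khovanskii--Kushnirenko (BKK) type statement, which I would prove by approximating the real exponents by rationals, so that via the substitution $w = \exp(z)$ the system becomes a Laurent polynomial system in $(\ctimes)^n$ to which classical BKK applies. The uniform shortening argument of step one, applied to the approximating systems, prevents their solutions from escaping to infinity, allowing passage to the limit. Nonvanishing of the degree is then forced by the hypothesis itself: if it vanished, a careful bookkeeping of dominant monomials would again allow the construction of a shortening with a zero in $\mathbb{R}^n + iG$.

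The main obstacle I expect is the rigorous execution of the rational approximation limit in step two: one must check that the topological degree is preserved under the approximation, that the approximating Laurent polynomial systems satisfy a uniform version of non-degeneracy at infinity, and that their zeros converge rather than colliding or disappearing. Khovanskii's original treatment handles these issues via a homotopy argument based on continuous deformation of the coefficients, together with an explicit count of zeros on a toric compactification of $(\ctimes)^n$ provided by Bernstein's theorem; I would follow that strategy directly.
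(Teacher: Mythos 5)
The paper itself does not prove Theorem \ref{nondegrtp}; it cites it as a black box from Zilber's \cite{Zil02} and Khovanskii's \cite{Kov}, giving only a brief sketch of the idea (approximate real exponents by rationals, with non-degeneracy preventing the approximate solutions from escaping). Your plan --- compactness of the zero set via the shortening condition, then a degree-theoretic BKK argument by rational approximation and a homotopy --- is the cited approach, so the overall architecture is the right one.

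There is, however, a genuine gap in your Step 2. You claim that nonvanishing of the degree is forced by the non-degeneracy hypothesis, because a vanishing degree would produce a shortening with a zero. That inference is not valid. Take $f_1 = e^{z_1}+e^{z_2}$, $f_2 = e^{z_1}-e^{z_2}$: this system has no solutions (adding the equations gives $2e^{z_1}=0$), both Newton polytopes are the segment from $(1,0)$ to $(0,1)$, and every shortening either equals the full (solution-free) system or reduces to the inconsistent $e^{z_j}=0$; so the system is non-degenerate at infinity in $\mathbb{R}^2+iG$ for every $G$, yet the degree is zero. What actually forces the degree to be nonzero is positivity of the mixed volume $\mv(P_1,\dots,P_n)$ of the Newton polytopes (equivalently the Rado property of their vertex sets), a hypothesis present explicitly or implicitly as a regularity condition in Zilber's and Khovanskii's formulations but elided from the paper's restatement. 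The elision is harmless for the paper, because in the only place the theorem is invoked, Lemma \ref{relcapam}, mixed volume positivity is supplied by rotundity via Corollary \ref{cororot} and Lemma \ref{shortclass}. Your Step 2 should therefore take mixed volume positivity as an additional input rather than attempt to derive nonvanishing of the degree from the shortening condition alone; with that input in place, the BKK and rational-approximation strategy you outline is the intended route.
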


The idea of Khovanskii's theorem is that exponential sums with real exponents can be approximated by exponential sums with rational exponents, for which it is easier to find zeros as they are essentially Laurent polynomials. The condition of non-degeneracy at infinity is needed to make sure that the approximated solutions do not diverge as the approximating systems converge to the one we are interested in.

As an example, one can consider the system 
$$\begin{cases}
    \exp(z_1)+\exp(z_2)+1=0 \\
    \exp(\sqrt{2} z_1)-\exp(z_2)=0 \\
\end{cases}$$

The second equation can be approximated for example by $$\exp(7z_1)-\exp(5z_2)=0$$ and the system then becomes essentially the system 
$$\begin{cases}
w_1+w_2+1=0 \\
w_1^7-w_2^5=0 \\
\end{cases}$$ which can be solved by algebraic means. Improving the precision of the approximations, we see that solutions cannot escape to infinity, because we know by looking at the initial variety that if $(a_1,a_2)$ is a point satisfying $a_1+a_2+1=0$ and $|a_1| \gg 1$, then $|a_1|$ and $|a_2|$ have to be roughly the same and therefore cannot satisfy an equation of the form $|a_1|^p=|a_2|^q$ where $\frac{p}{q}$ is very close to $\sqrt{2}$. 

We are going to show that given a variety $L \times W$ whose $\delta$-map is open, there is $s \in \mathbb{S}_1^n$ such that:

\begin{itemize}
	\item[1.] $\mathcal{A}_{s \cdot W}=\am$;
	\item[2.] The system of exponential sums associated to $L \times s \cdot W$ is non-degenerate at infinity in $\mathbb{C}^n$.
\end{itemize}

This will allow us to use Theorem \ref{nondegrtp} to find the intersection between $\Re(L)$ and $\am$ that we are looking for.

First of all, for a change, we notice a feature of varieties that are \textit{not} rotund. For this, we are going to use a simple property of holomorphic functions in several variables.

\begin{prop}\label{totallyreal}
	Let $U \subseteq \mathbb{C}^n$ be an open set such that $U \cap \mathbb{R}^n \neq \varnothing$, and assume $f:U \rightarrow \mathbb{C}$ is a non-zero holomorphic function. Then $f$ does not identically vanish on $U \cap \mathbb{R}^n$.
\end{prop}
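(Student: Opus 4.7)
The plan is to argue by contradiction. Assume $f$ vanishes identically on $U \cap \mathbb{R}^n$; I will show $f$ must then be identically zero on the connected component of $U$ containing any chosen real point, contradicting the hypothesis that $f$ is non-zero.

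First, I fix $x_0 \in U \cap \mathbb{R}^n$ and choose an open polydisc $\Delta$ centred at $x_0$ with $\Delta \subseteq U$, on which $f$ is represented by its convergent Taylor series
$$f(z) = \sum_{\alpha \in \mathbb{N}^n} \frac{\partial^\alpha f(x_0)}{\alpha!}\,(z - x_0)^\alpha.$$
The key observation is that each iterated partial derivative $\partial^\alpha f(x_0)$ can be computed as a limit of difference quotients of $f$ taken entirely along the real coordinate directions, since $f$ is (in particular) complex-differentiable in every direction. Hence $\partial^\alpha f(x_0)$ depends only on the restriction of $f$ to a real neighbourhood of $x_0$. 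Because $f$ vanishes on all of $\Delta \cap \mathbb{R}^n$, every Taylor coefficient vanishes, so $f \equiv 0$ on $\Delta$.

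To finish I invoke the identity principle for holomorphic functions of several variables: the set of points of $U$ at which $f$ vanishes together with all of its partial derivatives is open (by local Taylor expansion) and closed (by continuity of the derivatives), and it contains the non-empty set $\Delta$, so it equals the connected component of $U$ containing $x_0$. Hence $f$ is identically zero on that component, contradicting the assumption that $f$ is non-zero. If $U$ is not connected, the same reasoning applies separately to each connected component meeting $\mathbb{R}^n$, so no generality is lost.

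There is no substantive obstacle: the argument is a standard application of the identity principle combined with the fact that $\mathbb{R}^n$ is a totally real submanifold of $\mathbb{C}^n$ of full real dimension, so real Taylor data at a real point already determines the holomorphic function locally.
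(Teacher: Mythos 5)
Your proof is correct, and it takes a genuinely different route from the paper's. The paper argues by induction on $n$: the base case $n=1$ is the isolatedness of zeros of a one-variable holomorphic function, and the inductive step slices along the last real coordinate, observes that $f$ then vanishes on a set of real codimension $1$ in $U$, and concludes by noting that the zero locus of a nonzero holomorphic function is a complex analytic set, hence of real codimension at least $2$ — a contradiction. Your argument instead works directly at a single real point $x_0$: you note that all partial derivatives $\partial^\alpha f$ at $x_0$ are determined by the restriction of $f$ to a real neighbourhood (since complex derivatives can be evaluated along real increments), so they all vanish, and you invoke the identity principle to spread the zero from the polydisc to the component. The one spot to tighten is the phrase ``limit of difference quotients of $f$ taken entirely along the real coordinate directions'' for higher-order $\alpha$: the cleanest way to make this rigorous is a simple induction on $|\alpha|$, showing that if $\partial^\alpha f$ vanishes on $\Delta \cap \mathbb{R}^n$ then so does $\partial_{z_j}\partial^\alpha f$ (because each $\partial^\alpha f$ is itself holomorphic, so its complex derivative at a real point can be computed along a real increment at which it vanishes). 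With that phrasing your argument is airtight. Roughly speaking, your route is more elementary — it avoids appealing to the codimension of complex analytic sets — while the paper's route is more geometric and dispenses with Taylor expansions entirely.
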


\begin{proof}
	By induction on $n$. If $n=1$, then it follows from the fact that zeros of holomorphic functions are isolated.
	
	If $n>1$, assume $f$ vanishes on every point of $U \cap \mathbb{R}^n$. Let $\pi:U \rightarrow \mathbb{C}$ be the projection on the last coordinate, and $\pi':U: \rightarrow \mathbb{C}^{n-1}$ the projection on the first $n-1$ coordinates. For $r \in \pi(U \cap \mathbb{R}^n)$ consider the function $f_r:\pi'(\pi^{-1}(r)) \rightarrow \mathbb{C}$ defined by $(z_1,\dots,z_{n-1}) \mapsto f(z_1,\dots,z_n,r)$. Since $f$ vanishes on $U \cap \mathbb{R}^n$, $f_r$ vanishes on $\pi'(\pi^{-1}(r)) \cap \mathbb{R}^{n-1}$ for all $r$, and therefore by the inductive hypothesis all functions $f_r$ are identically zero.
	
	Therefore $f$ vanishes on the set $\bigcup_{r \in \pi(U \cap \mathbb{R}^n)} \pi^{-1}(r)$. As this has real codimension 1 in $U$, and the zero-locus of $f$ must be a complex analytic set, $f$ is identically zero.
\end{proof}

\begin{coro}\label{totallyrealtimes}
	Let $f: (\ctimes)^n \rightarrow \mathbb{C}$ be a non-zero holomorphic function. Then $f$ does not vanish on any open subset of $\mathbb{S}_1^n$.
\end{coro}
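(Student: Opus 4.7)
The plan is to reduce this to Proposition \ref{totallyreal} by pulling back along the complex exponential. The key observation is that $\mathbb{S}_1^n$ is the image of $i\mathbb{R}^n$ under $\exp:\mathbb{C}^n \to (\ctimes)^n$, and that the exponential is a surjective holomorphic local diffeomorphism.

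First, I would suppose for contradiction that there is a non-zero holomorphic function $f:(\ctimes)^n \to \mathbb{C}$ that vanishes on some non-empty open subset $U$ of $\mathbb{S}_1^n$. Define $h:\mathbb{C}^n \to \mathbb{C}$ by $h(z):=f(\exp(iz))$; this is holomorphic as a composition of holomorphic functions. Choose any point $s \in U$ and write $s=\exp(i x_0)$ for some $x_0 \in \mathbb{R}^n$. Since $\exp$ restricted to a small neighbourhood of $ix_0$ is an open map onto a neighbourhood of $s$ in $(\ctimes)^n$, and $U$ is open in $\mathbb{S}_1^n$, the set $V:=\{x \in \mathbb{R}^n \mid \exp(ix) \in U\}$ contains an open neighbourhood of $x_0$ in $\mathbb{R}^n$. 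On this neighbourhood, $h$ vanishes identically.

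Next, I would apply Proposition \ref{totallyreal} to $h$ on an open subset $\widetilde{U} \subseteq \mathbb{C}^n$ with $\widetilde{U} \cap \mathbb{R}^n \neq \varnothing$, for instance taking $\widetilde{U}$ to be a small polydisc around $x_0$. Since $h$ vanishes identically on the open subset $V$ of $\widetilde{U} \cap \mathbb{R}^n$, the zero set of $h$ on $\widetilde{U}$ contains an open subset of $\widetilde{U} \cap \mathbb{R}^n$; by the one-variable identity theorem applied in each $\mathbb{R}$-direction, or more directly by a slight refinement of the argument in Proposition \ref{totallyreal}, $h$ vanishes on all of $\widetilde{U} \cap \mathbb{R}^n$. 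Hence Proposition \ref{totallyreal} applies and forces $h \equiv 0$ on $\widetilde{U}$. By the identity principle for holomorphic functions on the connected manifold $\mathbb{C}^n$, $h$ is identically zero on $\mathbb{C}^n$.

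Finally, surjectivity of $\exp:\mathbb{C}^n \to (\ctimes)^n$ gives $f \equiv 0$ on $(\ctimes)^n$, contradicting the hypothesis. The only mild subtlety is the passage from ``$h$ vanishes on an open subset of $\mathbb{R}^n$'' to ``$h$ vanishes on an open subset of $\mathbb{C}^n$ meeting $\mathbb{R}^n$,'' which is precisely the hypothesis shape required by Proposition \ref{totallyreal}; this is handled automatically by working in a polydisc $\widetilde{U}$ around $x_0$, since the zero set of the non-zero holomorphic $h$ would otherwise be a proper analytic subset of $\widetilde{U}$ of real codimension $2$, too thin to contain a non-empty open subset of $\mathbb{R}^n$. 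I expect no genuine obstacle here; the corollary is essentially a direct transport of Proposition \ref{totallyreal} along the covering map $\exp$.
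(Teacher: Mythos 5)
Your proof is correct and matches the paper's approach exactly: both apply Proposition \ref{totallyreal} to $h(z):=f(\exp(iz))$ and conclude via the identity theorem and the surjectivity of $\exp$. The only minor over-complication is in your last paragraph: the hypothesis shape is handled cleanly by simply shrinking the polydisc $\widetilde{U}$ until $\widetilde{U} \cap \mathbb{R}^n \subseteq V$, whereas the codimension-count remark does not by itself work for $n \geq 2$ (a complex hypersurface in $\mathbb{C}^n$ has real dimension $2n-2 \geq n$, so raw dimension counting does not preclude it from containing an open piece of $\mathbb{R}^n$ --- Proposition \ref{totallyreal} is precisely the tool that rules this out).
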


\begin{proof}
	Apply the previous proposition to the function $f(\exp(iz)):U \rightarrow \mathbb{C}$ for every $U \subseteq \mathbb{C}^n$ which intersects $\mathbb{R}^n$.
\end{proof}

\begin{lem}\label{nonrot}
	Let $\left\{L_j \times W_j \mid j \in \omega \right\}$ be a countable set of non-rotund varieties, and let $\delta_j$ be the $\delta$-map of $L_j \times W_j$ for each $j$. Then $\mathbb{S}_1^n \nsubseteq \bigcup_{j \in \omega} \im(\delta_j)$.
\end{lem}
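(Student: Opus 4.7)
The plan is to show that each $\im(\delta_j)$ lies in a countable union of proper complex-analytic subsets of $(\ctimes)^n$, that each such subset intersects $\mathbb{S}_1^n$ in a closed set with empty interior, and then to conclude via the Baire category theorem applied to the compact space $\mathbb{S}_1^n$.

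First I would extract smallness of $\im \delta_j$ from non-rotundity. Since $L_j \times W_j$ is not rotund, there is a rational subspace $Q_j \leq \mathbb{C}^n$ with
$$\dim \pi_{Q_j}(L_j) + \dim \pi_{\exp(Q_j)}(W_j) < n - \dim Q_j.$$
Writing $\delta_{Q_j}$ for the $\delta$-map of $\pi_{Q_j}(L_j) \times \pi_{\exp(Q_j)}(W_j)$, the commutative diagram from the proof of Proposition \ref{openrot} gives $\pi_{\exp(Q_j)}(\im \delta_j) \subseteq \im \delta_{Q_j}$. Because the domain of $\delta_{Q_j}$ has complex dimension strictly less than that of its codomain $(\ctimes)^n / \exp(Q_j)$, Proposition \ref{holoimage} places $\im \delta_{Q_j}$ inside a countable union of proper analytic subsets of that quotient; pulling back under the quotient map places $\im \delta_j$ inside a countable union of proper analytic subsets of $(\ctimes)^n$.

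Next I would argue that any proper analytic subset $X \subseteq (\ctimes)^n$ has empty interior in $\mathbb{S}_1^n$. Locally $X$ is cut out by finitely many holomorphic functions, not all identically zero (else $X$ would equal the local chart). If $X$ contained an open piece $U$ of $\mathbb{S}_1^n$, each local defining function would vanish on $U$; pulling back through the parametrization $z \mapsto \exp(iz)$ as in the proof of Corollary \ref{totallyrealtimes}, Proposition \ref{totallyreal} would force those functions to vanish identically on an open subset of $(\ctimes)^n$, contradicting that $X$ is proper.

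Finally, aggregating these countable families over all $j \in \omega$ yields a countable collection of closed, nowhere-dense subsets of $\mathbb{S}_1^n$. Since $\mathbb{S}_1^n$ is compact Hausdorff and hence a Baire space, the Baire category theorem prevents this union from covering $\mathbb{S}_1^n$. The step I expect to require the most care is the first one: one must check that pulling back the countable family of proper analytic subsets of $(\ctimes)^n / \exp(Q_j)$ under $\pi_{\exp(Q_j)}$ really does yield proper analytic subsets of $(\ctimes)^n$ — this reduces to the fact that the fibers of the quotient by the algebraic subgroup $\exp(Q_j)$ have complex dimension $\dim Q_j$, so that a set of dimension $<n-\dim Q_j$ downstairs pulls back to a set of dimension $<n$ upstairs.
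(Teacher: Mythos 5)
Your proposal is correct and reaches the same conclusion using the same key tools as the paper — Proposition \ref{holoimage} to confine the images to countable unions of proper analytic sets, Corollary \ref{totallyrealtimes} to show such sets are nowhere dense in $\mathbb{S}_1^n$, and a countability/Baire argument to finish — but you establish the initial dimension bound by a genuinely different route. The paper argues directly with $\delta_j : L_j \times W_j \to (\ctimes)^n$: non-rotundity forces $\delta_j$ to be nowhere open (contrapositive of Proposition \ref{openrot}), so by the Open Mapping Theorem every fibre has dimension strictly greater than $\dim(L_j \times W_j) - n$, giving $\dim \delta_j < n$ and then smallness of the image via Proposition \ref{holoimage}. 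You instead pass to the quotient by the rational subspace $Q_j$ witnessing non-rotundity: the domain of $\delta_{Q_j}$ has dimension strictly less than that of $(\ctimes)^n/\exp(Q_j)$ by the very definition of non-rotundity, so Proposition \ref{holoimage} applies immediately downstairs, and the commutative square from Proposition \ref{openrot} plus the fact that $\pi_{\exp(Q_j)}$ has $\dim Q_j$-dimensional fibres brings the bound back upstairs. Your version avoids the Open Mapping Theorem entirely and makes the dimension count fully explicit, at the cost of the pullback step you correctly flag as needing verification; the paper's version is shorter but leaves the implication from non-rotundity to positive-dimensional fibres somewhat compressed. Both are sound, and your explicit appeal to the Baire category theorem is exactly what the paper's closing sentence is silently invoking.
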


\begin{proof}
	By Proposition \ref{holoimage}, the image of a complex analytic function $f:A \rightarrow B$ is contained in a countable union of complex analytic subsets of $B$, of dimension at most $\dim A - \min\{\dim f^{-1}(b) \mid b \in B \}.$
	
	In this case then consider $\delta_j:L_j \times W_j \rightarrow (\ctimes)^n$. Since $L_j \times W_j$ is not rotund, its image has empty interior, and the fibres of $\delta_j$ need to have positive dimension. Thus, the image of each $\delta_j$ must be contained in a countable union of analytic subsets of $(\ctimes)^n$ of positive codimension; hence in particular it is contained in a countable union of analytic subsets of $(\ctimes)^n$ of codimension 1, each of which is defined by an equation of the form $g(z)=0$ for $g$ a complex analytic function. By Corollary \ref{totallyrealtimes}, each of the $g$'s does not vanish on open subsets of $\mathbb{S}_1^n$. Hence for each $g$, the set $\{z \in \mathbb{S}_1^n \mid g(z)=0\}$ is a real analytic subset of $\mathbb{S}_1^n$, of positive real codimension: therefore the union of these zero sets cannot cover $\mathbb{S}_1^n$. 
 
    If we consider countably many $\delta$'s at once we still take a countable union, so the argument still applies.
\end{proof}

We recall that for these kind of varieties, $w \notin \im(\delta)$ is equivalent to $\exp(L) \cap w^{-1} \cdot W=\varnothing.$

We now examine shortenings of systems associated to varieties of the form $L \times W$.

\begin{lem}\label{shortclass}
	Let $f_1=\dots=f_n=0$ be a system of exponential sums equations associated to a variety of the form $L \times W$, so for all $j$ we have that $f_j$ can be rewritten as a Laurent polynomial (all exponents are integers) or it has only two terms (see Definition \ref{exposys}).
	
	Then for any shortening of the system, one of the following holds:
	
	\begin{itemize}
		\item[1.] There is $j$ for which the shortened equation $f_j'$ has the form $\exp(\phi(z))=0$, and is thus inconsistent;
		\item[2.] The shortened system is associated to $L \times W_\tau$ for an initial variety $W_\tau$ of $W$ such that $L \times W_\tau$ is not rotund.
	\end{itemize}
\end{lem}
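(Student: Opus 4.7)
The plan is to fix the direction $w \in \mathbb{R}^n$ underlying the shortening (so every $Q_j$ equals $\face_w(P_j)$, restricting to the non-trivial case $w \neq 0$) and analyze what the shortening does to each equation separately.

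For the ``$L$-equations'' $f_k = \exp(\phi_\lambda(z)) - 1$ the Newton polytope is the segment $\{0, \phi_\lambda\}$, so $\face_w$ is either the whole segment (when $w \cdot \phi_\lambda = 0$, leaving $f_k$ unchanged) or a single vertex. In the single-vertex case either $f_k' = \exp(\phi_\lambda(z))$, which is exactly case~1, or $f_k' = -1$; the latter is rewritten as case~1 by replacing the generator $\exp(\phi_\lambda(z)) - 1$ with the equivalent $\exp(-\phi_\lambda(z)) - 1$ (both define the same subset of $\mathbb{C}^n$), whose shortening is $\exp(-\phi_\lambda(z)) = 0$.

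For the ``$W$-equations'' $f_j$, if $\ini_w(f_j)$ collapses to a single monomial the shortened equation reads $\exp(\phi(z)) = 0$ and we are in case~1. Assume instead that no such collapse occurs. Then the shortened $L$-equations coincide with the original $L$-equations (equivalently $w \in L$, since $\phi_\lambda(w) = 0$ for every defining $\phi_\lambda$), and every $\ini_w(f_j)$ has at least two terms, which by Theorem~\ref{fundamental} forces $w \in \trop(W)$. Let $\tau$ be the face of $\trop(W)$ whose relative interior contains $w$; by taking the generators of the ideal of $W$ to form a tropical basis, the shortened $W$-subsystem cuts out exactly $W_\tau = \ini_w(W)$, and the entire shortened system is then the one attached to $L \times W_\tau$.

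To conclude case~2 I would show that $L \times W_\tau$ is not rotund. Set $Q := \tauc$. By Proposition~\ref{initialinvariant}, $W_\tau$ is invariant under translation by $\exp(Q)$, so $\dim \pi_{\exp(Q)}(W_\tau) = \dim W_\tau - \dim Q = \dim W - \dim Q$ (using the tropical-geometry fact $\dim W_\tau = \dim W$). Since $w \in L \cap \tau \subseteq L \cap Q$ is nonzero, $\dim \pi_Q(L) \leq \dim L - 1$. Combining these with $\dim L + \dim W = n$ gives
$$
\dim \pi_Q(L) + \dim \pi_{\exp(Q)}(W_\tau) < \dim L + \dim W - \dim Q = n - \dim Q,
$$
contradicting rotundity with respect to the subspace $Q$. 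The main obstacle I expect is the claim that the shortened $W$-subsystem cuts out exactly $W_\tau$ rather than a variety strictly containing it: this depends on the generators forming a tropical basis of the ideal of $W$. If one starts only with a generic generating set, the shortened $W$-subsystem defines some $V' \supseteq W_\tau$, but the non-rotundity inequality above rests only on the inclusion $w \in L \cap \tauc$ and on Proposition~\ref{initialinvariant}, both intrinsic to $\tau$ and $W_\tau$, so the conclusion is robust under this ambiguity.
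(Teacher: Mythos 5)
Your proof is correct and structurally parallel to the paper's, but it reaches the non-rotundity conclusion by a genuinely different route. The paper observes that all the faces $Q_1,\dots,Q_n$ in the coherent set lie in translates of the single hyperplane normal to $w$, so the mixed volume $\mv(Q_1,\dots,Q_n)$ vanishes, and then invokes Corollary~\ref{cororot} (which, for $L$ defined over $\mathbb{R}$, equates rotundity with nonzero mixed volume). You instead exhibit the explicit rational subspace $Q := \tauc$ and make a direct dimension count: $w \in L \cap \tauc$ forces $\dim\pi_Q(L) \leq \dim L - 1$, Proposition~\ref{initialinvariant} plus $\dim W_\tau = \dim W$ give $\dim\pi_{\exp(Q)}(W_\tau) = \dim W - \dim Q$, and adding these yields $\dim\pi_Q(L\times W_\tau) < n - \dim Q$, contradicting rotundity. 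This is more elementary — it bypasses the mixed-volume machinery entirely and names the witnessing subspace — while the paper's version is shorter given that Corollary~\ref{cororot} is already available; your version also makes visible \emph{why} the variety fails to be rotund (the direction $w$ lies simultaneously in $L$ and in $\tauc$). Two smaller remarks. First, the $-1=0$ case needs no generator swap: $-1 = -\exp(0\cdot z)$, so the shortened equation is already of the form $c\exp(\phi(z))=0$ with $\phi=0$, hence directly in case~1. Second, the caveat you raise — that the shortened $W$-subsystem may cut out a variety strictly larger than $W_\tau$ unless the $f_j$ form a tropical basis, and indeed that $w\in\trop(W)$ is not automatic from the generators' initial forms being non-monomial — is a real subtlety, but the paper's proof is equally terse on this point (it simply asserts "the system defines $L\times W'$ for some initial variety $W'$"), so it is a shared gap rather than a defect of your argument.
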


\begin{proof}
	Assume the first condition does not hold; then all faces in the coherent set $Q_1,\dots,Q_n$ which defines the shortening are positive dimensional. In particular, all polytopes of the equations defining $L$ appear among the $Q_j$'s, and so the system defines $L \times W'$ for some initial variety $W'$ of $W$. 
	
	However, all the faces are contained in translates of the same hyperplane, and therefore the mixed volume $\mv(Q_1,\dots,Q_n)$ has to be zero by \autocite[Lemma~4.6.6]{MS}. So the variety is not rotund by Corollary \ref{cororot}.
\end{proof}

Thus we may prove the following result.

\begin{lem}\label{shortsys}
	Let $L \times W$ be an additively free rotund variety whose $\delta$-map is open. Then there is $s \in \mathbb{S}_1^n$ such that the system defining $L \times s \cdot W$ is non-degenerate at infinity in $\mathbb{C}^n$.
\end{lem}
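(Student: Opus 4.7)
The plan is to verify both conditions of non-degeneracy at infinity in $\mathbb{C}^n$ for $L \times sW$ by a suitable choice of $s \in \mathbb{S}_1^n$: (i) every solution $z \in \mathbb{C}^n$ of the associated system is isolated, and (ii) every shortening of the system is unsolvable in $\mathbb{C}^n$. Condition (i) will hold for every $s \in \mathbb{S}_1^n$, whereas (ii) forces us to avoid a finite list of obstructions.

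For (i), the $\delta$-map $\delta_s$ of $L \times sW$ is related to $\delta$ by $\delta_s(l, sw) = s \cdot \delta(l, w)$, so the openness of $\delta$ (our assumption) transfers to $\delta_s$. Assuming by Lemma \ref{generichypexp} that $\dim L + \dim W = n$, the Open Mapping Theorem \ref{remop} forces every fibre of $\delta_s$ to be $0$-dimensional. A solution $z$ of the system has $\exp(z) \in sW$ and projects, modulo the kernel of $\exp$, to a point $l_0 \in L$ with $(l_0, \exp(l_0)) \in \delta_s^{-1}(1)$; since $\exp$ is locally biholomorphic and the fibres of $\delta_s$ are discrete, the solution set is locally discrete, hence isolated.

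For (ii), I appeal to Lemma \ref{shortclass}: every shortening of the system either contains an inconsistent equation $\exp(\phi(z))=0$, or corresponds to $L \times W_\tau$ for some initial variety $W_\tau$ of $W$ such that $L \times W_\tau$ is non-rotund. Because $\trop(W)$ is a finite polyhedral complex, only finitely many such initial varieties $W_{\tau_1}, \dots, W_{\tau_m}$ arise. A direct computation shows that $W \mapsto sW$ leaves all Newton polytopes unchanged and turns each $W_{\tau_j}$ into $sW_{\tau_j}$: if $f = \sum c_k w^k$ cuts out $W$, then $f(s^{-1}w) = \sum c_k s^{-k} w^k$ cuts out $sW$, and the supports of the initial forms coincide. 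Hence the shortening for $L \times sW$ at $\tau_j$ has a solution in $\mathbb{C}^n$ exactly when $\exp(L) \cap sW_{\tau_j} \neq \varnothing$, equivalently when $s^{-1} \in \im(\delta_{\tau_j})$, where $\delta_{\tau_j}$ is the $\delta$-map of $L \times W_{\tau_j}$. Applying Lemma \ref{nonrot} to the finite collection $\{L \times W_{\tau_j}\}$, we get $\mathbb{S}_1^n \not\subseteq \bigcup_j \im(\delta_{\tau_j})$; any $s$ whose inverse lies in the complement works. The only delicate step is this clean transformation of initial varieties under $W \mapsto sW$, which is exactly where the Newton polytope perspective of Section \ref{expsumeq} makes the argument essentially mechanical.
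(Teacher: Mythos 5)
Your strategy agrees with the paper's in outline: deduce isolatedness of solutions from openness of the $\delta$-map, classify the shortenings via Lemma \ref{shortclass}, and then invoke Lemma \ref{nonrot} to choose $s$ avoiding the solvable ones. The observation that $W \mapsto sW$ leaves Newton polytopes unchanged and sends $W_\tau$ to $sW_\tau$ is correct and matches what the paper implicitly uses.

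There is, however, a genuine gap in your treatment of (ii). You assert that the shortening of the system for $L \times sW$ at $\tau_j$ has a solution in $\mathbb{C}^n$ exactly when $\exp(L) \cap sW_{\tau_j} \neq \varnothing$, equivalently when $s^{-1} \in \im(\delta_{\tau_j})$. This is not the right solvability criterion. The linear part of the shortened system consists of the exponential sums $\exp(\phi_\lambda(z))=1$, and their common zero set in $\mathbb{C}^n$ is not $L$ but a countable union $\bigcup_{t\in T}(t+L)$ of translates of $L$, as the paper emphasizes in Section \ref{expsumeq}. Consequently the shortening at $\tau_j$ is solvable iff some translate $t+L$ meets $\log(sW_{\tau_j})$, i.e.\ iff $s^{-1}\exp(t) \in \im(\delta_{\tau_j})$ for some $t \in T$. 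Killing all shortenings therefore requires choosing $s$ with $s^{-1}$ outside the set $\bigcup_{j}\bigcup_{t\in T}\exp(-t)\,\im(\delta_{\tau_j})$, which is a countable (not finite) union of images of $\delta$-maps of the non-rotund varieties $L \times \exp(-t)W_{\tau_j}$. Your argument only excludes the $t=0$ terms, so the $s$ you pick may still leave a solvable shortening whose solution lies on a nontrivial translate $t+L$. The repair is easy, since Lemma \ref{nonrot} is stated precisely for countable families of non-rotund varieties — this is in fact why the paper needs the countable version — but the translate issue has to be noticed for the argument to close.
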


\begin{proof}
	Since the $\delta$-map is open, all of its fibres are discrete: therefore all intersections $L \cap z+\log W$ are isolated, and the first condition in the definition of non-degneracy at infinity is satisfied for all systems associated to varieties of the form $L \times w \cdot W$ for $w \in (\ctimes)^n$.
	
	Consider now all shortenings of the system corresponding to $L \times W.$ We saw in Lemma \ref{shortclass} that there are two kinds; let us focus on the second kind, namely the one associated to $L \times W_\tau$ for some initial variety $W_\tau$ of $W.$ 
	
	This system defines $$\left(\bigcup_{t \in T} t +L \right) \cap \log W_\tau$$ for a countable set $T$. Each translate $t+L$ intersects $\log W_\tau$ if and only if $\exp(t) \in \im(\delta_\tau)$ where $\delta_\tau$ is the $\delta$-map of $L \times W_\tau$.
	
	By Lemma \ref{nonrot}, there is $s \in \mathbb{S}_1^n$ such that for each $t \in T$, $s \cdot \exp(t)$ does not lie in the image of $\delta_\tau$ for each shortening of this kind: therefore, $\exp(t)$ does not lie in the image of the $\delta$-map of $L \times s^{-1} \cdot W_\tau$ for every $t \in T$ and for every shortening. 
	
	Since $s^{-1} \cdot W_\tau=(s^{-1} \cdot W)_\tau$, this means that all shortenings of the second kind of the system are inconsistent. The shortenings of the first kind are always inconsistent, as they equate the exponential of something to 0, and therefore the system associated to $L \times s^{-1} \cdot W$ is non-degenerate at infinity in $\mathbb{C}^n$.
\end{proof}

It is immediate that if $s \in \mathbb{S}_1^n$ then $\mathcal{A}_{s \cdot W}=\am$. Therefore, we obtain the desired point in $\Re(L) \cap \am$.

\begin{lem} \label{relcapam}
	Let $L \times W$ be a variety whose $\delta$-map is open. Then $\Re(L) \cap \am \neq \varnothing$.
\end{lem}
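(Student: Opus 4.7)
The plan is to combine the existence result of Lemma \ref{shortsys} (which produces a favourable twist $s \in \mathbb{S}_1^n$) with Khovanskii's solvability result for non-degenerate systems (Theorem \ref{nondegrtp}), and then extract a real-part witness from any solution $z_0$ obtained in this way.

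First I would apply Lemma \ref{shortsys} to obtain $s \in \mathbb{S}_1^n$ such that the exponential sums system attached to $L \times s \cdot W$ is non-degenerate at infinity in $\mathbb{C}^n$. Since $L$ is defined over $\mathbb{R}$, the linear forms $\phi_\lambda$ appearing in the equations $\exp(\phi_\lambda(z))=1$ may be chosen with real coefficients; the polynomial exponents contributed by $W$ (and hence by $s \cdot W$) are integer vectors. So all exponents of the system lie in $\mathbb{R}$. Non-degeneracy at infinity in the whole of $\mathbb{C}^n$ trivially gives non-degeneracy in $\mathbb{R}^n + iG$ for arbitrarily large balls $G \subseteq \mathbb{R}^n$, so Theorem \ref{nondegrtp} applies and yields a solution $z_0 \in \mathbb{C}^n$ to the system attached to $L \times s \cdot W$.

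Set $x_0 := \Re(z_0)$. I would then verify that $x_0$ simultaneously lies in $\Re(L)$ and $\am$. For the amoeba side: the solution $z_0$ satisfies $\exp(z_0) \in s \cdot W$, hence $w_0 := s^{-1}\exp(z_0) \in W$; and since $s \in \mathbb{S}_1^n$, i.e.~$\log|s_j|=0$ for every $j$,
\[
\Log(w_0) = \Log(\exp(z_0)) - \Log(s) = \Re(z_0) = x_0,
\]
so $x_0 \in \am$. For the linear side: the equations $\exp(\phi_\lambda(z_0))=1$ mean that $\phi_\lambda(z_0) \in 2\pi i\mathbb{Z}$ for each defining form $\phi_\lambda$. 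Because $\phi_\lambda$ has real coefficients, taking real parts gives $\phi_\lambda(x_0)=\Re(\phi_\lambda(z_0))=0$, so $x_0 \in L \cap \mathbb{R}^n=\Re(L)$.

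Thus $x_0 \in \Re(L) \cap \am$, which is the content of the lemma. I do not expect a genuine obstacle here: the real work has already been done in Lemma \ref{shortsys} (producing the twist that kills all shortenings) and in Theorem \ref{nondegrtp} (Khovanskii's approximation of real-exponent systems by rational-exponent ones). The only thing to verify carefully is that the exponents of the system are real so that Theorem \ref{nondegrtp} applies, and that the $\mathbb{R}$-definability of $L$ is what lets us pass from ``$z_0$ lies in the full solution set of the exponential $L$-equations'' to ``$\Re(z_0) \in \Re(L)$''.
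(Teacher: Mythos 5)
Your proof is correct and follows essentially the same route as the paper's: invoke Lemma \ref{shortsys} to produce $s\in\mathbb{S}_1^n$ killing all shortenings, apply Theorem \ref{nondegrtp} to get a solution $z_0$, and observe that $\Re(z_0)$ lies in both $\Re(L)$ (because the defining forms of $L$ have real coefficients) and $\am$ (because $\exp(z_0)\in s\cdot W$ and $|s_j|=1$ so $\mathcal{A}_{s\cdot W}=\am$). The only difference is cosmetic: you unpack $\mathcal{A}_{s\cdot W}=\am$ by hand where the paper cites the remark just preceding the lemma, and you are slightly more explicit about checking that the hypotheses of Theorem \ref{nondegrtp} (real exponents, non-degeneracy on the strips $\mathbb{R}^n+iG$) really hold.
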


\begin{proof}
	By Lemma \ref{shortsys}, there is $s \in \mathbb{S}_1^n$ such that the system associated to $L \times s \cdot W$ is non-degenerate at infinity in $\mathbb{C}^n$; therefore by Theorem \ref{nondegrtp} it has a solution. 
	
	If a point $z$ solves this system, then $\Re(z) \in \Re(L)$: the exponential sums associated to $L$ take the form $\exp(\phi_\lambda(z))=1$, and if $\phi_{\lambda}$ is a real function then it needs to be the case that $\lambda \cdot \Re(z)=0$. We have already noticed that $\Re(z) \in \mathcal{A}_{s \cdot W}=\am$, and therefore $\Re(z) \in \Re(L) \cap \am$.
\end{proof}

\begin{thm}\label{realpowers}
	Let $L \times W$ be an additively free rotund variety, $L \leq \mathbb{C}^n$ linear defined over the reals. Then $\exp(L) \cap W \neq \varnothing$.
\end{thm}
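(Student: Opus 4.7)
The plan is to assemble the machinery already developed throughout this section: essentially all the technical content is in place, and the theorem should follow by stringing together the preceding lemmas in the right order. Lemma \ref{red2} reduces the theorem to verifying the assumption $(\ast)$ that $\Re(L) \cap \am \neq \varnothing$ for every additively free rotund $L \times W$ (with $L$ defined over the reals); Lemma \ref{relcapam} verifies $(\ast)$ under the extra hypothesis that the $\delta$-map of $L \times W$ is open everywhere; and Lemma \ref{reduction 1} explains how to pass from the general case to the open-$\delta$-map case.

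Concretely, starting from an additively free rotund $L \times W$ with $L \leq \mathbb{C}^n$ defined over $\mathbb{R}$, I would first apply Lemma \ref{reduction 1} to produce an additively free rotund $L' \times W' \subseteq \mathbb{C}^{n+1} \times (\ctimes)^{n+1}$ whose $\delta$-map is open everywhere and for which $\exp(L') \cap W' \neq \varnothing$ implies $\exp(L) \cap W \neq \varnothing$. The space $L' = L \times \mathbb{C}$ is still defined over $\mathbb{R}$ since the added coordinate is the full copy of $\mathbb{C}$, so the real hypothesis is preserved. I would then invoke Lemma \ref{relcapam} on $L' \times W'$ to obtain a point in $\Re(L') \cap \mathcal{A}_{W'}$, which by part (2) of Proposition \ref{realpart} translates into a point of $W' \cap (\exp(L') \cdot \mathbb{S}_1^{n+1})$.

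To upgrade this to a genuine point of $\exp(L') \cap W'$, I would repeat the short argument from the proof of Lemma \ref{red2}: the image of the $\delta$-map of $L' \times W'$ meets $\mathbb{S}_1^{n+1}$, and since that image is open the intersection contains an open subset of $\mathbb{S}_1^{n+1}$. As $L'$ is not contained in any $\mathbb{Q}$-linear subspace of $\mathbb{C}^{n+1}$, the set $\exp(i\Re(L'))$ is dense in $\mathbb{S}_1^{n+1}$ by the opening proposition of this section, so the image of $\delta$ must contain a point of $\exp(i\Re(L')) \subseteq \exp(L')$. This produces $(l,w) \in L' \times W'$ with $w/\exp(l) \in \exp(L')$, hence $w \in \exp(L')$; pulling back through Lemma \ref{reduction 1} yields $\exp(L) \cap W \neq \varnothing$. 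There is no real obstacle left at this stage — the heavy lifting has already been done in Lemma \ref{relcapam} (via Khovanskii's theorem on systems non-degenerate at infinity) and in the shortening analysis of Lemmas \ref{nonrot}, \ref{shortclass} and \ref{shortsys}; the proof of Theorem \ref{realpowers} itself should be a brief assembly.
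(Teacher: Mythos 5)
Your proof is correct and follows essentially the same route as the paper, which simply states the theorem "follows from the combination of Lemmas \ref{red2} and \ref{relcapam}." Your write-up carefully unpacks that combination (explicitly re-running the reduction of Lemma \ref{reduction 1}, noting that $L'=L\times\mathbb{C}$ remains defined over $\mathbb{R}$, and replaying the density argument from Lemma \ref{red2}), which is a slightly more explicit version of the paper's intended chain of deductions but not a different argument.
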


\begin{proof}
	It follows from the combination of Lemmas \ref{red2} and \ref{relcapam}.
\end{proof}

The Rabinowitsch trick (which we have already used to show that the $\delta$-map can be assumed to be open) actually implies that the intersection between $\exp(L)$ and $W$ is Zariski-dense in $W$.

\begin{coro}\label{zardenseint}
	Let $L \times W$ be an additively free rotund variety, $L \leq \mathbb{C}^n$ defined over the reals. Then $\exp(L) \cap W$ is Zariski-dense in $W$.
\end{coro}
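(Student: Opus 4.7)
The plan is to reduce Zariski density to the existence statement of Theorem \ref{realpowers} via the Rabinowitsch trick, in essentially the same way it was used to arrange openness of $\delta$ in Lemma \ref{reduction 1}. Concretely, given any proper Zariski-closed subset $Z \subsetneq W$, I need to produce a point of $\exp(L) \cap W$ outside $Z$. Since $Z \neq W$, I can choose a Laurent polynomial $F \in \mathbb{C}[w_1^{\pm 1}, \ldots, w_n^{\pm 1}]$ that vanishes on $Z$ but not identically on $W$.

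Next I would build an auxiliary variety $L' \times W' \subseteq \mathbb{C}^{n+1} \times (\ctimes)^{n+1}$ by setting $L' := L \times \mathbb{C}$ and
\[
W' := \left\{ (w_1, \ldots, w_{n+1}) \in (\ctimes)^{n+1} \mid (w_1, \ldots, w_n) \in W,\ w_{n+1} \cdot F(w_1, \ldots, w_n) = 1 \right\},
\]
so that $W'$ is isomorphic to the open subset of $W$ where $F$ is nonzero. I would then check the hypotheses of Theorem \ref{realpowers}: $L'$ is defined over $\mathbb{R}$ because $L$ is and the additional $\mathbb{C}$-factor is trivially $\mathbb{R}$-defined; $L' \times W'$ is additively free because any $\mathbb{Q}$-affine space containing $L \times \mathbb{C}$ must split as $Q' \times \mathbb{C}$ with $Q'$ an $\mathbb{Q}$-affine subspace containing $L$, forcing $Q' = \mathbb{C}^n$ by additive freeness of $L \times W$; and rotundity follows from Proposition \ref{openrot} by the same calculation as in Lemma \ref{reduction 1}, since at any point $(l, 0, w, 1/F(w))$ with $(l, w) \in L \times W^\circ$ and $F(w) \neq 0$, varying $l_{n+1}$ freely shows that the image of a neighbourhood under the $\delta$-map of $L' \times W'$ contains $\delta(U_L \times U_W) \times \ctimes$, which is open.

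Finally, applying Theorem \ref{realpowers} to $L' \times W'$ produces a point $l' = (l, l_{n+1}) \in L'$ with $\exp(l') \in W'$. Projecting to the first $n$ coordinates yields $\exp(l) \in \exp(L) \cap W$ with $F(\exp(l)) \neq 0$, hence $\exp(l) \notin Z$. Since $Z$ was an arbitrary proper Zariski-closed subset, this exhibits Zariski density. I do not anticipate any real obstacle: the construction is a direct analogue of the one in Lemma \ref{reduction 1}, and the only step requiring care is verifying rotundity of $L' \times W'$, which is already the kind of openness-of-$\delta$ verification carried out earlier in the section.
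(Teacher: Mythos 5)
Your proposal is correct and follows essentially the same route as the paper: augment by one coordinate via the Rabinowitsch trick, check that the auxiliary variety $L' \times W'$ remains additively free and rotund (in parallel with Lemma \ref{reduction 1}), and apply Theorem \ref{realpowers} to get a point with $F \neq 0$. The only cosmetic difference is that the paper imposes $w_{n+1} = F(w)$ (which already forces $F(w) \neq 0$ since $w_{n+1} \in \ctimes$), whereas you use the equivalent standard form $w_{n+1}F(w) = 1$.
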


\begin{proof}
	Let $F:W \rightarrow \mathbb{C}$ be an algebraic function: then $\{w \in W \mid F(w)\neq 0 \}$ is a Zariski-open subset of $W$. Define $$W':=\{(w_1,\dots,w_{n+1}) \in \mathbb{C}^n \mid (w_1,\dots,w_n) \in \mathbb{C}^n \wedge w_{n+1}=F(w_1,\dots,w_n) \}.$$ Then $(L \times \mathbb{C}) \times W'$ is an additively free rotund subvariety of $\mathbb{C}^{n+1} \times (\ctimes)^{n+1}$, and therefore there is a point in $\exp(L \times \mathbb{C}) \cap W'$, i.e$.$ a point $(w,w_{n+1})$ with $w \in \exp(L) \cap W$ and $0 \neq w_{n+1}=F(w)$.  Hence, $w \in \exp(L) \cap  \{w \in W \mid F(w)\neq 0 \}$, as we wanted.
\end{proof}

\section{Complex Tropical Geometry}\label{comptro}

If we try to extend our result to all varieties of the form $L \times W$ with $L$ linear (so without any assumptions on the field of definition of $L$) we run into some obvious issues - the first of which is the fact that, for such an $L$, $\exp(L)$ is not dense in $\exp(L) \cdot \mathbb{S}_1^n$. The easiest example of this is the space $L = \{(z_1,z_2) \in \mathbb{C}^2 \mid z_2=iz_1 \}$: in this case $\exp(L)$ is a closed 1-dimensional analytic subgroup of $(\ctimes)^2$, while $\exp(L) \cdot \mathbb{S}_1^2=(\ctimes)^2$. This obviously makes it harder to find good approximated solutions.

However, a second aspect to be taken into consideration is the fact that if $L$ is not defined over $\mathbb{R}$ then $L \lneq \Re(L) + i \Re(L)$; and in particular, $\dim_\mathbb{C} L < \dim_{\mathbb{R}} \Re(L)$. Thus, if we are given an additively free rotund variety $L \times W$, with $\dim L =\codim W$, then we find that $\Re(L) \stint \trop(W)$ must be a positive dimensional polyhedral complex. Vaguely, this can be interpreted to mean that ``there are ways to make $\exp(L)$ and $W$ approach infinity in the same direction'': in such a direction, $W$ will resemble one of its initial varieties $W_\tau$, and if $L \times W_\tau$ is itself rotund (and therefore satisfies the open mapping property) then we have good chances to use points in $\exp(L) \cap W_\tau$ as approximated solutions of our system.

\subsection{Stable Intersections}

\textit{Stable intersections} are intersections between polyhedral complexes which are preserved under small perturbations. Our approach is slightly different from the one in  \autocite[Section~3.6]{MS}, as they also care about the \textit{multiplicities} of the polyhedra in a complex, which we do not define and will not need. 

\begin{defn}
	Let $\Sigma_1$, $\Sigma_2$ be polyhedral complexes in $\mathbb{R}^n$. The \textit{stable intersection} of $\Sigma_1$ and $\Sigma_2$, denoted $\Sigma_1 \stint \Sigma_2$, is the polyhedral complex consisting of polyhedra of the form $\sigma_1 \cap \sigma_2$, where $\sigma_i \in \Sigma_i$ for $i=1,2$ and $\dim(\sigma_1+\sigma_2)=n$.
\end{defn}

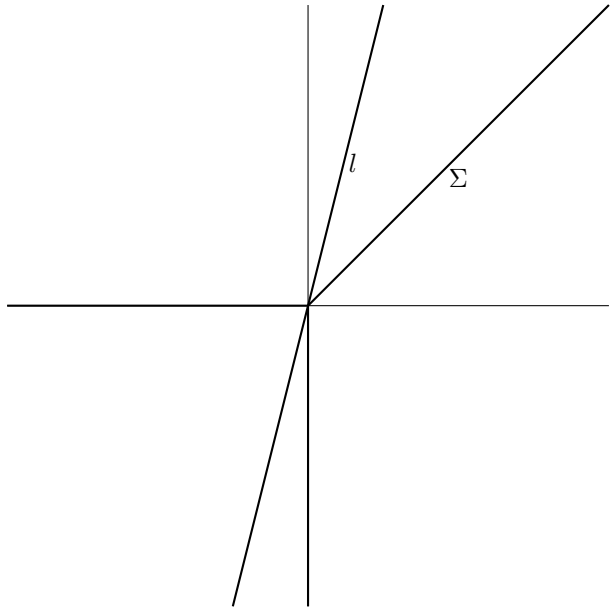
\begin{figure}
	\begin{center}
		
		\begin{tikzpicture}
		
		\draw (-4,0) -- (4,0);
		\draw (0,-4) -- (0,4);
		
		\draw [thick] (0,0) -- (4,4);
		\draw [thick] (0,0) -- (-4,0);
		\draw [thick] (0,0) -- (0,-4);
		
		\draw [thick] (-1,-4) -- (1,4);
		
		\node at (2,1.7) {$\Sigma$};
		\node at (0.6,1.9) {$l$};
		
		\end{tikzpicture}
		
	\end{center}
	\caption{The stable intersection of the usual polyhedral complex $\Sigma$ with the line $l$ is $\{0\}$: each facet intersects the line transversely. The stable intersection of $\Sigma$ with, say, the $x$-axis is still $\{0\}$: although the $x$-axis intersects the horizontal facet, the Minkowski sum of the facet and the axis does not have dimension $2$, and so the intersection does not count towards the stable intersection.} \label{stableint}
\end{figure}

In ``classical'' tropical geometry, the main interest in stable intersections comes from this fact, which ties the combinatorial definition to its geometric meaning.

\begin{fact}[{\autocite[Theorem~3.6.1]{MS}}]\label{algstint}
	Let $W_1,W_2 \subseteq (\ctimes)^n$ be algebraic varieties. Then $\trop(W_1)\stint \trop(W_2) \neq \varnothing$ if and only if the set $$\{w \in (\ctimes)^n \mid w \cdot W_1 \cap W_2 \neq \varnothing\}$$ is Zariski-open dense in $(\ctimes)^n$.
\end{fact}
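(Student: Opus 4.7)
Let $S := \{w \in (\ctimes)^n \mid w \cdot W_1 \cap W_2 \neq \varnothing\}$. Since $S$ is the image of the irreducible variety $W_1 \times W_2$ under the morphism $(w_1,w_2) \mapsto w_2 w_1^{-1}$, it is an irreducible constructible subset of $(\ctimes)^n$, and in particular ``Zariski-open dense'' is equivalent to ``Zariski-dense'' for $S$. My plan is to reduce the statement to the following auxiliary claim, which is really the heart of the stable intersection theorem: for $w$ in a Zariski-open dense subset of $(\ctimes)^n$, one has
\[
\trop(w W_1 \cap W_2) = \trop(W_1) \stint \trop(W_2).
\]
Granting this claim, both directions follow at once because a subvariety of $(\ctimes)^n$ is non-empty iff its tropicalization is non-empty: one direction is the trivial observation $\Log(w) \in \trop(V)$ for every $w \in V$, and the other is that by Definition \ref{tropical} the empty variety has empty tropicalization (the unit ideal is an initial ideal everywhere). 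Thus $S$ is Zariski-dense iff $wW_1 \cap W_2 \neq \varnothing$ for generic $w$ iff its tropicalization is non-empty for generic $w$ iff the stable intersection is non-empty.

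For the auxiliary claim the first observation is that $\trop(wW_1) = \trop(W_1)$ for every $w \in (\ctimes)^n$: translating $W_1$ by $w$ replaces each defining Laurent polynomial $f(x) = \sum_k c_k x^k$ by $f(w^{-1}x) = \sum_k c_k w^{-k} x^k$, which merely rescales each nonzero coefficient by a nonzero constant. Initial forms record only which exponents $k$ maximize $x \cdot k$, so they are unaffected, and so is $\trop(W_1)$. Combined with Theorem \ref{fundamental} this yields $\trop(wW_1 \cap W_2) \subseteq \trop(W_1) \cap \trop(W_2)$ for every $w$; the work is to show that for generic $w$ only the \emph{stable} part of this intersection is realized, with equality.

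The containment $\trop(wW_1 \cap W_2) \subseteq \trop(W_1) \stint \trop(W_2)$ for generic $w$ is the main obstacle. The strategy I would use: given a point $x$ lying in cells $\sigma_1 \subseteq \trop(W_1)$, $\sigma_2 \subseteq \trop(W_2)$ with $\dim(\sigma_1 + \sigma_2) < n$, Lemma \ref{startrop} identifies $\star_{\sigma_i}(\trop W_i) = \trop((W_i)_{\sigma_i})$, and Proposition \ref{initialinvariant} shows that $(W_i)_{\sigma_i}$ is invariant under the complex subtorus $\exp(\sigma_{i,\mathbb{C}})$. The product $\exp(\sigma_{1,\mathbb{C}}) \cdot \exp(\sigma_{2,\mathbb{C}}) = \exp((\sigma_1+\sigma_2)_{\mathbb{C}})$ is a proper subtorus $H \leq (\ctimes)^n$, so $(W_i)_{\sigma_i}$ are unions of $H$-cosets, their images in $(\ctimes)^n/H$ are finite, and for generic $w$ the translate $w \cdot (W_1)_{\sigma_1}$ is disjoint from $(W_2)_{\sigma_2}$. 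A limit argument using the convergence of amoebas to tropicalizations (Theorem \ref{nonstandardamoebas}) then shows that an intersection point of $wW_1$ and $W_2$ whose $\Log$-image accumulates at $x$ would force an intersection of these initial varieties, contradicting genericity of $w$.

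The reverse containment $\trop(W_1) \stint \trop(W_2) \subseteq \trop(wW_1 \cap W_2)$ is gentler: at a facet $\tau = \sigma_1 \cap \sigma_2$ of the stable intersection with $\dim(\sigma_1 + \sigma_2) = n$, the initial varieties $(W_1)_{\sigma_1}, (W_2)_{\sigma_2}$ have complementary dimensions and are invariant under transverse subtori, so the multiplication map $(W_1)_{\sigma_1} \times (W_2)_{\sigma_2} \to (\ctimes)^n$, $(a,b) \mapsto b a^{-1}$, has the correct fibre dimension for Corollary \ref{remco} to apply; its image therefore contains an open neighbourhood of some point, and generic $w$ in this neighbourhood produce actual intersection points of $wW_1$ with $W_2$ whose tropicalization lies in $\tau$. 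Putting the two containments together establishes the auxiliary claim and hence the fact.
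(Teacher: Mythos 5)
The paper gives no proof of this statement: Fact \ref{algstint} is stated as an external result with a direct citation to Theorem~3.6.1 of Maclagan--Sturmfels, so there is no argument of the paper's own to compare yours against.

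What you call the ``auxiliary claim'' --- that $\trop(wW_1\cap W_2)=\trop(W_1)\stint\trop(W_2)$ for $w$ in a dense open set --- is not a reduction of the Fact but is, up to rewording, the theorem being cited; your preliminary observations (that $S$ is an irreducible constructible image, that $\trop(wW_1)=\trop(W_1)$, and the resulting one-sided containment) are all correct, but you are then proposing to re-derive the cited theorem from scratch. The sketch of the hard containment has a genuine gap. From the invariance of $(W_i)_{\sigma_i}$ under $\exp(\sigma_{i,\mathbb{C}})$ you infer that $(W_i)_{\sigma_i}$ is a union of cosets of the larger torus $H=\exp\bigl((\sigma_1+\sigma_2)_{\mathbb{C}}\bigr)$ and hence has finite image in $(\ctimes)^n/H$. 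Neither step is valid: invariance under a subgroup of $H$ does not make a set a union of $H$-cosets, and the image of $(W_1)_{\sigma_1}$ in $(\ctimes)^n/H$ has dimension at least $\dim W_1-\dim(\sigma_1+\sigma_2)$, which is positive in general (already for $\sigma_1=\sigma_2=\{0\}$, where $H$ is trivial, the ``image'' is all of $W_1$). Without that finiteness the step ``for generic $w$ the translated initial varieties are disjoint'' has no justification, and the subsequent accumulation argument has nothing to contradict. The genuine genericity argument behind MS Theorem~3.6.1 is considerably more delicate and is not recoverable from Lemma~\ref{startrop} and Proposition~\ref{initialinvariant} alone; since the paper treats the statement as a black box (it is used only in a side remark after Lemma~\ref{complexstint}), the appropriate course is to cite it rather than attempt a self-contained proof.
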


We conclude this section by stating a theorem which goes in that direction. Note that stable intersection is associative (see \autocite[Remark 3.6.14]{MS}.)

\begin{thm}[{\autocite[Theorem~4.6.9]{MS}}]\label{bern1}
	Suppose $P_1,\dots,P_r$ are polytopes in $\mathbb{R}^n$ with integer vertices and $\Sigma_1,\dots,\Sigma_r$ are the $(n-1)$-skeletons of their normal fans. Fix $w \in \mathbb{R}^n$, and let $Q_i:=\face_w(P_i)$.
	
	Then $w \in \Sigma_1 \stint \dots \stint \Sigma_r$ if and only if for every $J \subseteq \{1,\dots,r\}$ we have $\dim\left(\sum_{j \in J} Q_j\right) \geq |J|$, if and only if the $r$-dimensional mixed volume of the faces $\mv(Q_1,\dots,Q_r)$ is non-zero. 
\end{thm}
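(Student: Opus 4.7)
The plan is to establish the chain of equivalences (a) $\Leftrightarrow$ (b) $\Leftrightarrow$ (c), where (a) is the stable intersection condition, (b) is the Hall-type dimension inequality for every $J$, and (c) is non-vanishing of the mixed volume. I would first settle (b) $\Leftrightarrow$ (c) using Rado's theorem on transversals (Theorem \ref{rado}) and then link (c) to (a) through the multiplicity interpretation of stable intersections.

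For (b) $\Leftrightarrow$ (c), set $V_j := \text{span}(Q_j - Q_j) \subseteq \mathbb{R}^n$, so that $\dim \sum_{j \in J} Q_j = \dim \sum_{j \in J} V_j$ for every $J \subseteq \{1,\ldots,r\}$. By Proposition \ref{volume}, the normalized volume of $\lambda_1 Q_1 + \cdots + \lambda_r Q_r$ is a degree-$r$ homogeneous polynomial in the $\lambda_j$, and $\mv(Q_1, \ldots, Q_r)$ is the coefficient of $\lambda_1 \cdots \lambda_r$. A direct expansion of the Minkowski sum in coordinates on a fixed $r$-dimensional subspace containing $\sum_j V_j$ shows this coefficient is positive iff one can choose linearly independent vectors $v_j \in V_j$ for $j=1,\ldots,r$. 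Theorem \ref{rado}, applied inside that ambient $r$-dimensional space, characterizes the existence of such an independent transversal precisely by condition (b).

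For (a) $\Leftrightarrow$ (c), I would exploit the duality between faces of each $P_j$ and cones of its normal fan. The cone $\sigma_j$ whose relative interior contains $w$ is dual to $Q_j$, with $\aff(\sigma_j) = V_j^\perp$ and $\dim \sigma_j = n - \dim Q_j$, so in particular $w \in |\Sigma_j|$ iff $\dim Q_j \geq 1$. The key input is that the stable intersection can be computed with multiplicities at each cell, and the multiplicity at a transverse cell $\sigma_1 \cap \cdots \cap \sigma_r$ containing $w$ equals $\mv(Q_1, \ldots, Q_r)$. Hence $w \in \Sigma_1 \stint \cdots \stint \Sigma_r$ as a set iff this mixed volume is nonzero, yielding the desired equivalence.

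The main obstacle will be the second step: identifying the stable intersection multiplicity with a mixed volume. The cleanest route is to introduce multiplicities explicitly and prove the formula via the perturbation characterization $\Sigma_1 \stint \cdots \stint \Sigma_r = \lim_{v_2, \ldots, v_r \to 0} \Sigma_1 \cap (\Sigma_2 + v_2) \cap \cdots \cap (\Sigma_r + v_r)$ for generic $v_j$, then read off the multiplicity at $w$ by counting the finitely many perturbed transverse intersection points nearby and relating this count to a mixed volume through Bernstein's theorem. Alternatively, one can iterate the definition of stable intersection associatively and verify that transversality at each stage imposes precisely the Rado conditions on the corresponding subsets $J$; this avoids multiplicities altogether but makes the combinatorial bookkeeping more delicate, since one must track carefully how the subset-wise dimension inequalities propagate through the associativity of $\stint$.
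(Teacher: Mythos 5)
This result is quoted verbatim from Maclagan--Sturmfels (cited as {\autocite[Theorem~4.6.9]{MS}}); the paper supplies no proof of its own, so there is no in-paper argument against which to compare. Judged on its own terms, your sketch is in the right spirit, but it has gaps in both equivalences.

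For (b) $\Leftrightarrow$ (c), the appeal to Rado's theorem is the correct idea (and matches how the paper uses {\autocite[Lemma~4.6.6]{MS}} in Corollary~\ref{cororot}), but two points need care. First, Theorem~\ref{rado} is about \emph{finite sets}, not subspaces; you must apply it to explicit finite generating sets $A_j$ (e.g., the edge directions of $Q_j$, or $v(Q_j)-v(Q_j)$), and only then does $\dim\operatorname{span}\bigl(\bigcup_{j\in J}A_j\bigr)=\dim\sum_{j\in J}V_j$ turn condition~(b) into the hypothesis of Rado's theorem. Second, the phrase ``coordinates on a fixed $r$-dimensional subspace containing $\sum_j V_j$'' is not well-posed when $\dim\sum_j V_j>r$, which can certainly happen (take $r=1$ and $Q_1$ a square). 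The quantity ``$r$-dimensional mixed volume'' then needs a definition that is robust to this case, and the positivity criterion (existence of linearly independent $v_j\in V_j$) must be justified in that generality; the paper's Proposition~\ref{volume} does not supply this directly, since its volume polynomial is degree $n$, not $r$.

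The more serious gap is in (a) $\Leftrightarrow$ (c). You propose to read off membership in the stable intersection from a ``multiplicity at $w$ equals $\mv(Q_1,\dots,Q_r)$'' formula, and to prove that formula by perturbation plus Bernstein's theorem. But the paper's definition of stable intersection is the \emph{multiplicity-free} one (a cell $\sigma_1\cap\dots\cap\sigma_r$ is kept iff $\dim(\sigma_1+\dots+\sigma_r)=n$); nothing in the setup you are given attaches a weight to cells, so the multiplicity formula is not even available to quote without first building that theory. Moreover, the formula ``transverse stable-intersection multiplicity $=$ mixed volume'' is itself essentially the content of the tropical Bernstein theorem you are trying to prove, so invoking it as a ``key input'' risks circularity unless you carefully source it from an independent algebraic-geometric version of Bernstein's theorem together with the correspondence of Fact~\ref{algstint}. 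Your alternative route --- iterating the combinatorial definition of $\stint$ and matching each transversality condition $\dim(\sigma_{i_1}'+\dots+\sigma_{i_k}')=n$ with a Rado-type inequality, using that $\operatorname{span}N(Q_j')=V_j'^\perp$ for faces $Q_j'\subseteq Q_j$ --- is in fact the cleaner path and avoids multiplicities entirely; I would pursue that one and accept the bookkeeping, rather than the multiplicity argument as stated.
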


\subsection{Piecewise-Linear Functions}

In this section we review some of notions from \cite{Kaz14} and \cite{Kaz18}, and use them to derive the following result which will be used later on.

\begin{thm}\label{complextropical}
	Let $L \times W$ be an additively free, rotund variety, with $L$ linear not defined over $\mathbb{R}$. Then there is $\tau \in \trop(W)$, $\dim \tau >0$, such that:
	
	\begin{itemize}
		\item[1.] $L \times W_\tau$ is rotund;
		\item[2.] $(\supp(\Re(L) \stint \trop(W)) \cap \relint \tau \neq \varnothing$.
	\end{itemize}
\end{thm}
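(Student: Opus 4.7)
The plan is as follows. After reducing via Lemma \ref{generichypexp} to the case $\dim L + \dim W = n$, set $d := \dim W$ and $\ell := \dim_\mathbb{R} \Re(L)$. Because $L$ is not defined over $\mathbb{R}$, the complex $\mathbb{R}$-span $L + \overline{L}$ strictly contains $L$, so $\ell = \dim_\mathbb{C}(L + \overline{L}) > \dim_\mathbb{C} L = n - d$, and hence $\ell + d > n$. This dimensional surplus drives the whole argument: a transverse intersection between $\Re(L)$ and $\trop(W)$ should have positive dimension $\ell + d - n$.

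The central step will be to extract non-emptiness of $\Re(L) \stint \trop(W)$ from rotundity, for which I would rely on the complex pseudo-volume formalism of Kazarnovskii (\cite{Kaz14}, \cite{Kaz18}). By Lemma \ref{radorotundity}, rotundity of $L \times W$ is the Rado property for the Newton polytopes of the attached system of exponential sums projected to $(\mathbb{C}^n)^\vee / L^\perp$. When $L$ is real-defined this is exactly the non-vanishing of the real mixed volume (Corollary \ref{cororot}), and translates to non-emptiness of the stable intersection via Theorem \ref{bern1}; for $L$ with genuinely complex coefficients the corresponding non-vanishing statement concerns a complex pseudo-mixed-volume, and it still translates, via a Bernstein-type theorem, into non-emptiness of $\Re(L) \stint \trop(W)$. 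Combined with the dimension count above, this gives a maximal cell of $\Re(L) \stint \trop(W)$ of dimension $\ell + d - n > 0$. This is the step I expect to be the hardest, since it requires a genuinely complex analogue of the real Bernstein/Kazarnovskii theorem.

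Next, I would pick a point $x$ in the relative interior of such a maximal cell. By the definition of stable intersection, $x$ lies in $\Re(L) \cap \relint \sigma$ for some $\sigma \in \trop(W)$ with $\aff(\sigma) + \Re(L) = \mathbb{R}^n$, and maximality of the cell's dimension forces $\sigma$ to be a facet of $\trop(W)$, so $\dim_\mathbb{R} \sigma = d$. Setting $\tau := \sigma$ gives condition (2) of the theorem by construction, with $\dim \tau = d > 0$.

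For condition (1), that $L \times W_\tau$ is rotund, note first that $\aff(\tau) + \Re(L) = \mathbb{R}^n$ together with $\mathbb{C}$-linearity of $L$ yields
\[
L + \tau_{\mathbb{C}} \supseteq (\Re(L) + \aff(\tau)) + i(\Re(L) + \aff(\tau)) = \mathbb{C}^n,
\]
and a dimension count using $\dim_\mathbb{C} \tau_\mathbb{C} = \dim_\mathbb{R} \tau = d = n - \dim L$ then forces $L \cap \tau_{\mathbb{C}} = 0$. By Proposition \ref{initialinvariant}, $W_\tau$ is invariant under multiplication by $\exp(\tau_{\mathbb{C}})$, and because $\dim_\mathbb{C} W_\tau = d = \dim_\mathbb{C} \tau_{\mathbb{C}}$, $W_\tau$ must be a finite union of cosets of $\exp(\tau_{\mathbb{C}})$ (as already observed after Lemma \ref{startrop}). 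Hence at any smooth $(l_0,w_0) \in L \times W_\tau$, the fibre of the $\delta$-map of $L \times W_\tau$ over $\delta(l_0,w_0)$ is locally modelled on a translate of $L \cap \tau_{\mathbb{C}} = 0$ and so has the correct dimension $0 = \dim(L \times W_\tau) - n$; Corollary \ref{remco} then gives openness of the $\delta$-map near $(l_0,w_0)$, and Proposition \ref{openrot} yields rotundity of $L \times W_\tau$, completing the argument modulo the complex Bernstein input.
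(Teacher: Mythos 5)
Your outline correctly identifies Kazarnovskii's pseudo-mixed volume machinery as the key input, and the observation that $\dim_\mathbb{R}\Re(L) > \dim_\mathbb{C}L$ when $L$ is not $\mathbb{R}$-defined is right and drives the positivity of $\dim\tau$. But the argument for condition (1), rotundity of $L \times W_\tau$, has a genuine gap: the claimed inclusion
\[
L + \tau_{\mathbb{C}} \supseteq (\Re(L) + \aff(\tau)) + i(\Re(L) + \aff(\tau))
\]
would require $\Re(L) \subseteq L + \tau_{\mathbb{C}}$, which does not hold. What is true is $\Re(L) \subseteq L + \overline{L}$, and $\overline{L} \neq L$ precisely because $L$ is not $\mathbb{R}$-defined, so there is no reason for $\overline{L}$ (hence for $\Re(L)$) to lie in $L + \tau_{\mathbb{C}}$. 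Indeed, transversality of $\Re(L)$ and $\aff(\tau)$ in $\mathbb{R}^n$ does not force $L \cap \tau_{\mathbb{C}} = 0$: take $n = 4$, $L = \mathbb{C}e_1 + \mathbb{C}(e_2 + ie_3)$ (so $\Re(L) = \mathbb{R}e_1 + \mathbb{R}e_2 + \mathbb{R}e_3$, a $3$-dimensional real space) and $\aff(\tau) = \mathbb{R}e_1 + \mathbb{R}e_4$. Then $\aff(\tau) + \Re(L) = \mathbb{R}^4$, yet $L \cap \tau_{\mathbb{C}} = \mathbb{C}e_1 \neq 0$. With $L \cap \tau_{\mathbb{C}} \neq 0$ the fibres of the $\delta$-map of $L \times W_\tau$ are positive-dimensional and the openness/rotundity conclusion fails, so this step needs a different justification.

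The paper avoids the issue by running the whole argument in $\mathbb{C}^n$ rather than $\mathbb{R}^n$. The ``complex Bernstein'' input you defer is Lemma \ref{complexstint}: rotundity of $L\times W$ is equivalent to non-emptiness of the stable intersection, in $\mathbb{C}^n$, of the tropical hypersurfaces $\Sigma_j + i\mathbb{R}^n$ together with the hyperplanes (real \emph{and} genuinely complex) cutting out $L$. Because at least one hyperplane is not $\mathbb{R}$-defined, this stable intersection is not contained in $i\mathbb{R}^n$, so it contains a point $x+iy$ with $x\neq 0$; one then takes $\tau$ to be the cell of $\trop(W)$ with $x\in\relint\tau$ — not necessarily a facet, and the dimension of $\tau$ is not controlled the way you suppose. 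Rotundity of $L\times W_\tau$ is then obtained not by a direct dimension count on $L\cap\tau_{\mathbb{C}}$ but via Lemma \ref{startrop}: since $\trop(W_\tau)=\star_\tau(\trop(W))$ has cells with the same affine spans as those of $\trop(W)$ through $\tau$, non-emptiness of the stable intersection transfers from the collection for $L\times W$ to that for $L\times W_\tau$, and the converse direction of Lemma \ref{complexstint} gives rotundity. Something of that shape is needed to replace the flawed inclusion argument.
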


This result will be given a geometric interpretation in the next section.

In what follows, let $\langle z, w \rangle$ denote the usual Hermitian product on $\mathbb{C}^n$, so that if $z=(z_1,\dots,z_n)$ and $w=(w_1,\dots, w_n)$ then $\langle z, w \rangle =z_1\o{w}_1+\dots+z_n \o{w}_n$. 

\begin{rem}\label{scalar}
	Identify $z=(z_1,\dots,z_n)=(x_1+iy_1,\dots,x_n+iy_n)$ with the real vector $(x_1,y_1,\dots,x_n,y_n) \in \mathbb{R}^{2n}$. Then we have that $\Re\left(\langle z, z' \rangle\right)$ coincides with the real scalar product in $\mathbb{R}^{2n}$, as $$\Re(\langle(x_1+iy_1,\dots,x_n+iy_n) , (x_1'+iy_1',\dots,x_n'+iy_n')\rangle)=$$ $$=x_1x_1'+y_1y_1'+\dots +x_nx_n'+y_ny_n'.$$
\end{rem} 

\begin{defn}
	A \textit{piecewise-linear function} on $\mathbb{C}^n$ is a function $h: \mathbb{C}^n \rightarrow \mathbb{R}$ for which there are polyhedra $P_1,\dots,P_k \subseteq \mathbb{C}^n \cong \mathbb{R}^{2n}$ and vectors $a_1,\dots,a_k \in \mathbb{C}^n$ such that $\bigcup_{j=1}^k P_j=\mathbb{C}^n$ and $h$ coincides with the function $z \mapsto \Re\langle z_j, a_j \rangle$ on each polyhedron $P_j$ .
\end{defn}

Any $\mathbb{R}$-linear function $\mathbb{C}^n \rightarrow \mathbb{R}$ is clearly piecewise-linear.

\begin{defn}
	Let $A$ be a non-empty closed convex subset of $\mathbb{C}^n$. The \textit{support function} of $A$ is the function $h_A:\mathbb{C}^n \rightarrow \mathbb{R}$ defined by $$h_A(z)=\sup\{\Re\langle z, a \rangle \mid a \in A \}.$$
\end{defn}

The support function of a convex polyhedron is then clearly a piecewise-linear function, whose domains of linearity form a polyhedral complex which coincides with the normal fan to the polyhedron (and whose locus of non-differentiability coincides with the $(n-1)-$skeleton of such a fan).

In \cite{Kaz18} Kazarnovskii introduced a numerical invariant for collections of polytopes in $\mathbb{C}^n$, the \textit{pseudo-mixed volume}, and used it to study tropical properties of systems of exponential sums with complex exponents, with a particular attention to the behaviour of stable intersections. He also proved that non-vanishing of the pseudo-mixed volume is equivalent to a certain occurrence of the Rado property - we will show in a bit how to tie this to rotundity using Lemma \ref{radorotundity}.

Given a Laurent polynomial $f \in \mathbb{C}[w_1^{\pm 1}, \dots, w_n^{\pm n} ]$, consider its Newton polytope $P$. We have seen how this is a subset of the dual space $(\mathbb{C}^n)^\vee$: we can therefore interpret the support function $h_P:(\mathbb{C}^n)^\vee \rightarrow \mathbb{R}$ as $$h_P(z)=\{\max \Re(\phi(z)) \mid \phi \in P \}.$$ As the exponents of $f$ are integer (and therefore real) numbers for any function in $P$ we will have $\phi(z+iv)=\phi(z)$ for all $v \in \mathbb{R}^n$: therefore, the domains of linearity of $h_P$ are subsets of the form $D+i\mathbb{R}^n$ for some polyhedra $D \subseteq \mathbb{R}^n$. 

On the other hand, consider the function $z \mapsto l_1z_1+\dots+l_nz_n$ on $\mathbb{C}^n$: this is a point in $(\mathbb{C}^n)^\vee$. Consider the convex hull of $\{0, (l_1,\dots,l_n) \}$: this is a segment in $(\mathbb{C}^n)^\vee$, orthogonal to the real hyperplane $$H:=\{z \in \mathbb{C}^n \mid \Re \langle z, \o{l} \rangle =0 \}.$$ If $(l_1,\dots,l_n) \in \mathbb{R}^n$, then $H$ itself is invariant under translation by elements in $i\mathbb{R}^n$.

\begin{defn}
    Let $\tau \subseteq \mathbb{R}^n$ be a polyhedron. We denote by $\tau_{\mathbb{C}}$ the complexification $\aff(\tau) \otimes \mathbb{C}$ of its span.
\end{defn}

Consider a linear space $L \leq \mathbb{C}^n$. Every space $L$ may be written as $L=\relc \cap H$ for some $H \subseteq \mathbb{C}^n$ with $\Re(H)=\mathbb{R}^n$. Thus, the equations defining $L$ may be assumed to take the form 

$$\begin{cases}
r_{1,1} z_1 + \dots + r_{1,n} z_n=0 \\
\vdots\\
r_{k,1} z_1 + \dots + r_{k,n} z_n=0 \\
\lambda_{1,1}z_1 + \dots + \lambda_{1,n} z_n=0 \\
\vdots \\
\lambda_{n-d-k,1}z_1+ \dots + \lambda_{n-d-k,n}z_n=0\\
\end{cases}$$ where the vectors $(r_{j,1},\dots,r_{j,n})$ are in $\mathbb{R}^n$ and the corresponding equations define $\relc$ and the vectors $(\lambda_{j,1}, \dots, \lambda_{j,n})$ are in $\mathbb{C}^n \setminus \mathbb{R}^n$. We denote by $h_{r_j}$ the function $z \mapsto \langle z, r_j \rangle$ and by $h_{\lambda_j}$ the function $z \mapsto \langle z, \o{\lambda_j} \rangle$.

Consider the system of exponential sums associated to the variety $L \times W$, in the sense of Definition \ref{exposys}. We know how to attach to this system a set of convex polytopes. For each of these polytopes, the locus of non-differentiability of the support function is a polyhedral complex. In particular, the corner locus of $h_{P_j}$ will be $\Sigma_j+i\mathbb{R}^n$ where $\Sigma_j$ denotes the tropicalization of the hypersurface defined by the polynomial $f_j$; the corner locus of $h_{r_j}$ (resp$.$ $h_{\lambda_j}$) will be the hyperplane defined by $\Re\langle z,r_j \rangle=0$ (resp$.$ $\Re\langle z,\o{\lambda}_j\rangle$). We refer to these as the \textit{collection of polyhedral complexes} associated to the variety $L \times W$.

Non-emptiness of the stable intersection of these complexes can be characterized using the \textit{mixed Monge-Amp\`ere operator}. This is an operator which associates to a tuple of $k$ piecewise-linear functions on $\mathbb{C}^n$ a \textit{current}, i.e$.$ a linear functional on the space of smooth compactly supported differential forms. We do not get into the details, but rather refer the reader to \autocite[Section~3]{Kaz18}; the main idea, which will be used in the upcoming proof, is that non-vanishing of the mixed Monge-Amp\`ere operator for the support functions of $n$ convex polytopes in $\mathbb{C}^n$ is equivalent to non-emptiness of the stable intersections of the corresponding polyhedral complexes. This is essentially \autocite[Theorem~3.1]{Kaz18}, which establishes an isomorphism of rings between a ring of currents and a ring of equivalence classes of polyhedral complexes, in which the product is given by stable intersections.

\begin{lem}\label{complexstint}
	The stable intersection of the collection of polyhedral complexes associated to the variety $L \times W$ is non-empty if and only if the variety $L \times W$ is rotund.
\end{lem}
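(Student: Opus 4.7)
My plan is to chain together three equivalences. First, by Kazarnovskii's Theorem 3.1 cited above, the stable intersection of the polyhedral complexes associated to $L \times W$ is non-empty if and only if the pseudo-mixed volume of the convex polytopes whose support functions generate those corner loci is non-zero. Second, by Kazarnovskii's combinatorial characterization mentioned at the end of the previous subsection, non-vanishing of the pseudo-mixed volume is equivalent to a Rado-type property for the vertex sets of these polytopes. Third, by Lemma \ref{radorotundity}, rotundity of $L \times W$ is equivalent to the Rado property for the projections modulo $L^\perp$ of the vertex sets of the Newton polytopes $P_1,\dots,P_d$ of the defining polynomials of $W$. After applying Lemma \ref{generichypexp} we may assume $\codim W = \dim L = d$, so all three statements deal with exactly $n$ polytopes in $(\mathbb{C}^n)^\vee$.

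The polytopes in question are the Newton polytopes $P_1,\dots,P_d$, together with the segments $[0,r_j]$ for $j=1,\dots,k$ and $[0,\overline{\lambda}_j]$ for $j=1,\dots,n-d-k$ associated to the linear forms defining $L$. The choice of segments is motivated by the fact that the support function of $[0,w]$ is $z \mapsto \max\{0,\Re\langle z,w\rangle\}$, whose corner locus is exactly the hyperplane $\{z : \Re\langle z,w\rangle=0\}$ appearing in the collection attached to $L$.

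The main step, which I expect to be the technical obstacle, is matching the two Rado conditions. A Kazarnovskii transversal should consist of one vertex from each polytope whose $\mathbb{C}$-span is all of $(\mathbb{C}^n)^\vee$. Since each segment $[0,r_j]$ or $[0,\overline{\lambda}_j]$ has only two vertices and $0$ cannot contribute to a basis, any such transversal is forced to include all the non-zero segment vertices $r_1,\dots,r_k,\overline{\lambda}_1,\dots,\overline{\lambda}_{n-d-k}$; by construction these are (after the standard identifications) the coefficient vectors of a system defining $L$, so they form a $\mathbb{C}$-basis of $L^\perp$. The remaining choice, one vertex from each $P_j$, must then project to a $\mathbb{C}$-basis of $(\mathbb{C}^n)^\vee/L^\perp$, which is precisely the Rado condition of Lemma \ref{radorotundity}. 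The bulk of the work will be in verifying carefully that Kazarnovskii's criterion really takes the form of $\mathbb{C}$-linear independence of the chosen vertices, and in keeping track of the book-keeping between the real structure in which the stable intersection lives (via $\mathbb{C}^n \cong \mathbb{R}^{2n}$) and the complex structure in which the Newton-polytope Rado property is phrased; once this is done the equivalence follows immediately by composing the three characterizations.
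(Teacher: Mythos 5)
Your proposal follows the same chain of Kazarnovskii equivalences as the paper's proof: stable intersection $\neq\varnothing$ $\Leftrightarrow$ pseudo-mixed volume $\neq 0$ $\Leftrightarrow$ Rado-type transversal $\Leftrightarrow$ rotundity (via Lemma \ref{radorotundity}). The one thing you add, which the paper's proof silently elides, is the verification that the Rado property in Lemma \ref{radorotundity} (the $d$ projected vertex sets $\pi_{L^\perp}(v(P_j))$ inside the $d$-dimensional quotient $(\mathbb{C}^n)^\vee/L^\perp$) matches the Rado property that Kazarnovskii's criterion tests (the full collection of $n$ polytopes, segments plus Newton polytopes, inside $(\mathbb{C}^n)^\vee$). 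Your argument here is correct: any transversal is forced to pick the non-zero endpoint of each segment, those endpoints form a basis of $L^\perp$, and by standard linear algebra the remaining choices from the $P_j$'s complete a basis of $(\mathbb{C}^n)^\vee$ exactly when they project to a basis of the quotient. This is a worthwhile elaboration of the paper's terser proof. Do check, as you flag yourself, that the Rado criterion in Kazarnovskii's Corollary~3.3 is stated over $\mathbb{C}$ (not $\mathbb{R}^{2n}$); it is, since the pseudo-mixed volume is built from the complex structure, but this is exactly where a mismatch could otherwise sneak in.
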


\begin{proof}
	By Lemma \ref{radorotundity}, rotundity is equivalent to the Rado property for the collection of convex polytopes associated to $L \times W$. By \autocite[Corollary~3.3]{Kaz18}, this collection has the Rado property if and only if the mixed pseudo-volume of the polytopes is non-zero; by Theorem 3.5 in the same paper this is equivalent to non-vanishing of the mixed Monge-Amp\`ere operator on the collection of support functions of the polytopes, and finally by \autocite[Proposition~3.1]{Kaz18} this is equivalent to non-emptiness of the stable intersection.
\end{proof}

As an aside, we note that going back to the case in which $L$ is defined over the reals this says that $L \times W$ is rotund if and only if $\Re(L) \stint \trop(W) \neq \varnothing$: therefore a version of Fact \ref{algstint} holds for this kind of varieties, establishing that the stable intersection between $\trop(W)$ and $\Re(L)$ (which acts as a sort of ``$\trop(\exp(L))$'') can be lifted to an actual intersection between $W$ and $\exp(L)$.

This allows us to prove the main result of this section, Theorem \ref{complextropical}.

\begin{proof}[Proof of Theorem \ref{complextropical}]
	As $L \times W$ is rotund, Lemma \ref{complexstint} guarantees that the collection of polyhedral complexes associated to the variety has non-empty stable intersection. This stable intersection is obviously a subset of the stable intersection of any subset of the collection of polyhedral complexes: therefore, it needs to be contained in the stable intersection of those complexes which are loci of non-differentiability of $\mathbb{R}$-linear functions, that is the stable intersections of the tropical hypersurfaces $\Sigma_j+i\mathbb{R}^n$ and the hyperplanes defined by $\Re \langle r_j,z\rangle$ for $r_j \in \mathbb{R}^n$. (Note that if $L$ were defined over the reals then these would be all the complexes, and the stable intersection would turn out to be exactly $i\mathbb{R}^n$.)
	
	Since $L$ is not defined over the reals, there is at least one complex linear equation (and so at least one hyperplane not defined over $\mathbb{R}$ in the collection of polyhedral complexes associated to $L \times W$). This does not contain  $i\mathbb{R}^n$, and therefore the stable intersection of the complexes cannot be of the form $iT$ with $T$ an $n$-dimensional polyhedral complex in $\mathbb{R}^n$. So let $\tau_0$ be any maximal cell of the stable intersection: this contains, in its relative interior, a point of the form $x+iy \in \mathbb{C}^n$ with $x \neq 0$. Therefore $x$ is in the support of $\Re(L) \stint \trop(W)$; let $\tau \in \trop(W)$ be the cell such that $x \in \relint(\tau)$.
	
	Consider the variety $L \times W_\tau$. As $W_\tau$ is an initial variety, its tropicalization is equal to $\star_\tau(\trop(W))$ (by Lemma \ref{startrop}); in other words it contains cells with the same affine spans as the cells of $\trop(W)$ which contain $\tau$. Therefore, if the collection of complexes associated to $L \times W$ has non-empty stable intersection, then so must the collection associated to $L \times W_\tau$: by the converse implication of Lemma \ref{complexstint}, then, $L \times W_\tau$ is rotund.
	
	So $L \times W_\tau$ satisfies the statement.
\end{proof}

\section{Raising to Complex Powers}\label{comprtp}

In this section we establish our main result, the existence of solutions to all systems of equations associated to additively free rotund varieties of the form $L \times W$ with $L$ linear.

To do so we are going to use Theorem \ref{complextropical} to show that in a suitable compactification of $(\ctimes)^n$, $W$ and $\exp(L)$ have got sequences of points with the same limit, and that these sequences can be taken to avoid phenomena of asymptoticity. To make this clear, the first step is to give a geometric interpretation of Theorem \ref{complextropical}, explaining the second clause of the statement (the consequences of $\Re(L) \stint \trop(W)$ intersecting the relative interior of $\tau$).

We will study compactifications of $W$ which live in \textit{toric varieties}. 

\begin{defn}
	A \textit{complex toric variety} is a complex algebraic variety $Y$ for which there exists an embedding $\iota:(\ctimes)^n \hookrightarrow Y$ for some $n$, such that:
	
	\begin{itemize}
		\item[1.] The image of $\iota$ is an open dense subset of $Y$;
		\item[2.] There is a continuous action of $(\ctimes)^n$ on $Y$ which coincides with the usual multiplication on the image of $\iota$.
	\end{itemize}  
\end{defn}

The three easiest examples of complex toric varieties are the torus $\ctimes$ itself, with just one orbit of the multiplicative action, the affine line $\mathbb{A}^1(\mathbb{C})$ with the two orbits $\{0\}$ and $\mathbb{A}^1(\mathbb{C}) \setminus \{0\}$ and the projective line $\mathbb{P}^1(\mathbb{C})$ with orbits $\{0\}, \{\infty\}$ and $\mathbb{P}^1(\mathbb{C}) \setminus \{0, \infty\}$. The theory of toric varieties is classical and several books on the subject exist, such as \cite{Ful} and \cite{Cox}.

A standard construction associates a toric variety to a polyhedral fan. We refer the reader to \autocite[Section~6.1]{MS} or \autocite[Section~4]{NS12} for details on this construction and just recall that given the polyhedral fan $\Sigma \subseteq \mathbb{R}^n$ there is a toric variety $Y_\Sigma$ such that the action of $(\ctimes)^n$ on $Y_\Sigma$ has precisely one orbit $\mathcal{O}_\tau$ for each polyhedron $\tau \in \Sigma$. Denoting by $\mathbb{T}_\tau$ the algebraic subgroup of $(\ctimes)^n$ obtained as $\exp(\tauc)$, we have that the orbit $\mathcal{O}_\tau$ is itself a torus, isomorphic to $(\ctimes)^n/\mathbb{T}_\tau$: each point of $\mathcal{O}_\tau$ is a point at infinity of a translate $w \cdot \mathbb{T}_\tau$, and different translates have different points at infinity.

\begin{exa}
	Consider once again the curve $W=\{(w_1,w_2) \in (\ctimes)^2 \mid w_1+w_2+1=0 \}$: we know that $\Sigma=\trop(W)$ is a polyhedral complex consisting of three half-lines and the origin. In this case, thus $Y_\Sigma$ needs to have four orbits. The orbit $\mathcal{O}_0$ coincides with $(\ctimes)^2$. The other three are attached to the algebraic subgroups of $(\ctimes)^2$ defined by $w_1w_2^{-1}=1$, $w_1=1$ and $w_2=1$ respectively. Thus, for example, $Y_\Sigma$ contains a point at infinity for each translate of the group defined by $w_1=1$, of the form $(c,\infty)$ for some $c \in \ctimes$.
\end{exa}

We are particularly interested, of course, in the case in which $\Sigma$ is the tropicalization of a variety $W$. In that case Tevelev has defined and studied \textit{tropical compactifications} of varieties: the tropical compactification of $W$ is a compact subset of $Y_{\trop(W)}$. Again, we refer the reader to \cite{Tev}, \autocite[Section~6.4]{MS} or \autocite[Section~4]{NS12} for details and just recall the following properties of tropical compactifications.

\begin{thm}[Tevelev, see {\autocite[Lemma~12 and Corollary~13]{NS12}}]\label{tevelev}
	Let $W \subseteq (\ctimes)^n$ be an algebraic variety, and let $\Sigma=\trop(W)$. Then there is a subvariety $\o{W} \subseteq Y_\Sigma$ such that:
	
	\begin{itemize}
		\item[1.] $\o{W}$ is complete;
		\item[2.] $\o{W} \cap \mathcal{O}_0=W$;
		\item[3.] For every $\tau \in \Sigma$, $\mathbb{T}_\tau$ acts by translation on the initial variety $W_\tau$, and $W_\tau/\mathbb{T}_\tau=\o{W} \cap \mathcal{O}_\tau$ under the isomorphism $(\ctimes)^n /\mathbb{T}_\tau \cong \mathcal{O}_\tau$.
	\end{itemize}
\end{thm}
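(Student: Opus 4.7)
The natural candidate for $\o{W}$ is the Zariski closure of $W$ inside the toric variety $Y_\Sigma$, where $\Sigma$ is equipped with a (suitably refined, if necessary) fan structure on the support of $\trop(W)$. Property (2) is immediate from this choice: $W$ is closed in $\mathcal{O}_0=(\ctimes)^n$, which is open dense in $Y_\Sigma$, so $\o{W}\cap\mathcal{O}_0=W$.

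For property (3), I would argue locally in the standard affine chart $U_\tau\subseteq Y_\Sigma$ around $\mathcal{O}_\tau$, whose coordinate ring is the semigroup algebra $\mathbb{C}[\tau^\vee\cap\mathbb{Z}^n]$. The orbit $\mathcal{O}_\tau$ is cut out inside $U_\tau$ by the monomials $w^m$ with $m$ in the relative interior of $\tau^\vee$, i.e.\ those $m$ for which $\langle m,v\rangle>0$ for $v\in\relint(\tau)$. Restricting a Laurent polynomial $f$ defining $W$ to $\mathcal{O}_\tau$ therefore amounts to deleting those terms, which is exactly the operation of taking the initial form $\ini_v(f)$ for $v\in\relint(\tau)$. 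Since $W_\tau$ is $\mathbb{T}_\tau$-invariant by Proposition \ref{initialinvariant}, the quotient $W_\tau/\mathbb{T}_\tau$ makes sense as a subvariety of $\mathcal{O}_\tau\cong(\ctimes)^n/\mathbb{T}_\tau$, and comparing defining ideals yields $\o{W}\cap\mathcal{O}_\tau=W_\tau/\mathbb{T}_\tau$.

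The real content of the theorem is completeness (property 1), which I would verify via the valuative criterion. Let $R$ be a discrete valuation ring with fraction field $K$, residue field $k$, valuation $\nu$, and suppose we are given a morphism $\textnormal{Spec}(K)\to\o{W}$; we need to extend it to $\textnormal{Spec}(R)\to\o{W}$. The case in which the $K$-point lies in one of the boundary orbits $\mathcal{O}_\tau$ can be handled by property (3) and induction on $\dim\tau$. Otherwise the $K$-point lies in $W\subseteq(K^\times)^n$, giving coordinates $(w_1,\dots,w_n)$ and an associated valuation vector $v:=(\nu(w_1),\dots,\nu(w_n))\in\mathbb{Z}^n$. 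By the valuation-theoretic form of the Fundamental Theorem of Tropical Geometry, $v\in\trop(W)=|\Sigma|$, so $v$ lies in the relative interior of some cone $\tau\in\Sigma$. In the chart $U_\tau$ this means that every monomial $w^m$ with $m\in\tau^\vee$ has $\nu(w^m)=\langle m,v\rangle\geq 0$, so the $K$-point extends to an $R$-point of $U_\tau$, and since $\o{W}\cap U_\tau$ is closed, to an $R$-point of $\o{W}$.

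The main obstacle is, as expected, property (1): one has to genuinely know that all $K$-valued points of $W$ tropicalize into $\trop(W)$, which is the non-trivial ``lifting'' direction of the Fundamental Theorem. A secondary technical point is the choice of fan structure on the support of $\trop(W)$: to make the local chart computations in property (3) transparent one typically refines $\Sigma$ until $Y_\Sigma$ is smooth, and one verifies that the resulting $\o{W}$ is independent of the refinement. This refinement process is standard but must be carried out carefully to ensure all three clauses hold simultaneously, which is the crux of Tevelev's original argument.
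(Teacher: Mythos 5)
Note first that the paper does not prove Theorem \ref{tevelev}: it is attributed to Tevelev, and the sentence immediately preceding it explicitly defers to the literature (Tevelev's original paper, Maclagan--Sturmfels Section~6.4, and the cited [NS12, Section~4]) for details. So there is no in-paper argument to compare your sketch against; what follows is an assessment of the sketch on its own terms.

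Your sketch traces the standard route: take $\o{W}$ to be the Zariski closure of $W$ in $Y_\Sigma$, verify (2) trivially, verify (3) locally via the affine charts $U_\tau = \textnormal{Spec}\,\mathbb{C}[\tau^\vee \cap \mathbb{Z}^n]$ and initial ideals, and obtain completeness (1) from the valuative criterion together with the lifting direction of the Fundamental Theorem of Tropical Algebraic Geometry. This is indeed the shape of the argument in the references, and you correctly identify the two genuine obstacles: the non-trivial direction of the Fundamental Theorem, and the dependence of the boundary-stratum description on the choice of fan structure.

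Two places deserve more care. First, a sign-convention point that would bite in a full write-up: the paper defines initial forms by keeping the terms where $x \cdot k$ is \emph{maximal} (Definition \ref{tropical}), while the standard cone/orbit dictionary you invoke in $U_\tau$ pairs $\mathcal{O}_\tau$ with the \emph{min}-initial form (the surviving monomials on $\mathcal{O}_\tau$ are those of minimal pairing with $\relint(\tau)$). Under the paper's conventions one therefore compactifies in $Y_{-\Sigma}$, or flips a sign somewhere; e.g.\ in $w_1 + w_2 + 1 = 0$ the ray $(0,-1) \in \trop(W)$ encodes $w_2 \to 0$ and $W$ limits to $(-1,0)$, but the orbit of the cone $(0,-1)$ in the standard construction sits at $w_2 = \infty$. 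Second, the sentence ``comparing defining ideals yields $\o{W} \cap \mathcal{O}_\tau = W_\tau / \mathbb{T}_\tau$'' glides over the real content: restricting $I(W) \cap \mathbb{C}[\tau^\vee \cap \mathbb{Z}^n]$ to the orbit gives one inclusion essentially for free, but the reverse inclusion requires knowing that $\ini_v(I)$ is constant on $\relint(\tau)$ (i.e.\ that $\Sigma$ refines the Gr\"obner structure on $\trop(W)$) and that nothing is lost in passing to the chart, which is precisely the flat-degeneration/fan-refinement step you flag in your final paragraph. Making that step precise, rather than the valuative criterion, is usually the more fiddly part of a clean write-up.
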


In other words, limits of sequences in $W$ belong to the orbits $\mathcal{O}_\tau$, and any such sequence can be approximated by a sequence on the initial variety $W_\tau$ that lies in just one translate of $\mathbb{T}_\tau$ and has the same limit in $Y_\Sigma$.

As we mentioned above, we are going to use these objects to give a geometric interpretation of Theorem \ref{complextropical}.

\begin{lem}\label{geomcomtrop}
	Let $L \times W$ be an additively free rotund variety, $L$ linear not defined over the reals. Let $\Sigma=\trop(W)$, and $Y_\Sigma$ be the toric variety associated to the fan. Then there is $\tau \in \Sigma$, $\dim (\tau)>0$, such that:
	
	\begin{itemize}
		\item[1.] $L \times W_\tau$ is rotund;
		\item[2.] There exist $s \in \mathbb{S}_1^n$ and a sequence $\{s \cdot \exp(l_j)\}_{j \in \omega} \subseteq (s \cdot \exp(L)) \cap W_\tau$ such that $\lim_j s \cdot \exp(l_j) \in \mathcal{O}_\tau$ and the $\delta$-map of $L \times W_\tau$ is open at $(0,s \cdot \exp(l_j))$ for each $j \in \omega$. 
	\end{itemize}
\end{lem}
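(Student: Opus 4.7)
I would apply Theorem \ref{complextropical} to $L \times W$ to obtain a cell $\tau \in \trop(W)$ of positive dimension such that $L \times W_\tau$ is rotund (which is exactly part~(1)) and such that there is a point $x \in \Re(L) \cap \relint(\tau)$. Since $\tau$ is a cone with apex at the origin and $\dim \tau > 0$, $x$ is non-zero and the whole open ray $\mathbb{R}_{>0} x$ lies in $\relint(\tau)$. By Proposition \ref{deltastru} applied to the rotund variety $L \times W_\tau$, the $\delta$-map of $L \times W_\tau$ is open on $L \times W_\tau^\circ$ for some Zariski-open dense $W_\tau^\circ \subseteq W_\tau$; combining Proposition \ref{initialinvariant} (which gives the $\mathbb{T}_\tau := \exp(\tauc)$-invariance of $W_\tau$) with the identity $\delta(l, t \cdot w) = t \cdot \delta(l, w)$, we see that $W_\tau^\circ$ is itself $\mathbb{T}_\tau$-invariant; in particular openness of $\delta$ at $(0, w)$ depends only on whether $w \in W_\tau^\circ$.

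Next I would produce a base pair $(l_0, w_0) \in L \times W_\tau^\circ$ with $s := w_0 / \exp(l_0) \in \mathbb{S}_1^n$, i.e. (by Proposition \ref{realpart}) exhibit a point of $\Re(L) \cap \mathcal{A}_{W_\tau^\circ}$. The amoeba $\mathcal{A}_{W_\tau}$ is invariant under translation by $\aff(\tau)$ as a consequence of the $\mathbb{T}_\tau$-invariance of $W_\tau$, and its asymptotic behaviour is governed by $\trop(W_\tau) = \star_\tau(\trop(W))$, which contains $\aff(\tau)$; together with $x \in \Re(L) \cap \relint(\tau)$ and Theorem \ref{nonstandardamoebas}, this forces $\Re(L) \cap \mathcal{A}_{W_\tau^\circ}$ to be non-empty (Zariski-density of $W_\tau^\circ$ in $W_\tau$ ensures the amoeba is not reduced by passing to $W_\tau^\circ$). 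This supplies the desired pair $(l_0,w_0)$ and hence $s$.

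To build the escaping sequence I would lift $x$ to $l_\star \in L$ with $\Re(l_\star) = x$ and look for scalar multiples $t l_\star$ lying in $\tauc + 2 \pi i \mathbb{Z}^n$ (so that $\exp(t l_\star) \in \mathbb{T}_\tau$). Since $\Re(t l_\star) = t x \in \aff(\tau)$ automatically, this amounts to the condition $t \cdot \Im(l_\star) \in \aff(\tau) + 2 \pi \mathbb{Z}^n$, which defines a closed subgroup of $\mathbb{R}$; possibly after modifying $l_\star$ by a suitable element of $L \cap i \mathbb{R}^n = i(L \cap \mathbb{R}^n)$ (which does not change $\Re(l_\star) = x$), this subgroup will contain an unbounded sequence $t_j \to +\infty$. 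Setting $l_j := l_0 + t_j l_\star \in L$, the $\mathbb{T}_\tau$-invariance of $W_\tau^\circ$ gives $s \cdot \exp(l_j) = w_0 \cdot \exp(t_j l_\star) \in W_\tau^\circ$, so openness of $\delta$ at $(0, s \cdot \exp(l_j))$ holds by Proposition \ref{deltastru}. Finally, $\Re(l_j) = \Re(l_0) + t_j x$ escapes to infinity along the ray $\mathbb{R}_{>0} x \subseteq \relint(\tau)$, and Theorem \ref{tevelev}(3), which identifies $\mathcal{O}_\tau$ with the limit points of the $\mathbb{T}_\tau$-cosets inside $W_\tau$, then ensures that $s \cdot \exp(l_j)$ converges in $Y_\Sigma$ to the unique limit point of the coset $w_0 \cdot \mathbb{T}_\tau$ in $\mathcal{O}_\tau$.

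The hard part will be the arithmetic step ensuring that the subgroup of admissible $t$'s is non-trivial: if $\Im(l_\star)$ projects to an element of infinite order in the compact quotient $\mathbb{R}^n / (\aff(\tau) + 2 \pi \mathbb{Z}^n)$ and no modification by $L \cap i\mathbb{R}^n$ can correct this, the exact construction above breaks down. In that degenerate case I would fall back on a Baire-category argument in the spirit of Lemma \ref{nonrot}, choosing $s$ generically in $\mathbb{S}_1^n$ so that approximate solutions built from a dense (rather than discrete) family of $t$'s can be upgraded, via openness of $\delta$ and compactness of the tropical compactification $\overline{W}$ provided by Theorem \ref{tevelev}, to a genuine sequence converging to a point of $\mathcal{O}_\tau$.
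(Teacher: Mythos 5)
Your identification of $\tau$ via Theorem \ref{complextropical} and the use of $\mathbb{T}_\tau$-invariance plus Proposition \ref{deltastru} to reduce openness of the $\delta$-map to membership in $W_\tau^\circ$ both match what the paper does. However, the escape step is where your proposal genuinely breaks, and you correctly identify this yourself: fixing a single $l_\star \in L$ with $\Re(l_\star) = x$ and demanding $t_j l_\star \in \tauc + 2\pi i \mathbb{Z}^n$ imposes the diophantine condition $t_j \Im(l_\star) \in \aff(\tau) + 2\pi\mathbb{Z}^n$, which in general has no unbounded solutions (and $L \cap i\mathbb{R}^n$ can be trivial, so no modification is available). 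The Baire-category fallback you sketch does not repair this; Lemma \ref{nonrot} is a tool for knocking out shortenings in the real case, and ``choosing $s$ generically'' is not accompanied by an argument that such a choice makes the admissible $t$'s unbounded or produces a sequence converging into $\mathcal{O}_\tau$.

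The paper's proof avoids this arithmetic entirely by not fixing $s$ in advance. It first produces a single base point $w \in (W_\tau)^\circ \cap (\exp(L)\cdot\mathbb{S}_1^n)$ directly via Corollary \ref{zardenseint} applied to $\Re(L)_{\mathbb C} \times W_\tau$ (a cleaner and more rigorous route than the amoeba-asymptotics heuristic you give for $\Re(L) \cap \mathcal{A}_{W_\tau^\circ} \neq \varnothing$, which as written does not actually prove that intersection is non-empty). Then, because $\Re(L) \cap \relint(\tau) \neq \varnothing$, one can choose $t_j$ directly in $\Re(L)_{\mathbb{C}} \cap \tauc$ with $\exp(t_j) \to \mathcal{O}_\tau$; since $\exp(t_j) \in \exp(L)\cdot\mathbb{S}_1^n$, each $\exp(t_j)\cdot w$ can be written as $s_j \cdot \exp(l_j^0)$ with $s_j \in \mathbb{S}_1^n$, $l_j^0 \in L$. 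The desired $s$ is then extracted as a subsequential limit of the $s_j$ by compactness of $\mathbb{S}_1^n$, and openness of $\delta$ at $(0,w)$ lets you absorb the discrepancy $s/s_j$ into a perturbation of $(l_j^0, \exp(t_j)w)$ for $j$ large. The key idea you are missing is precisely this: let $s$ emerge from a compactness argument at the end, rather than trying to construct a sequence that preserves a prescribed $s$ from the start.
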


\begin{proof}
	Let $L \times W$ be an additively free rotund variety, and take $\tau \in \trop(W)$ to be the polyhedron whose existence is granted by Theorem \ref{complextropical}, so that $L \times W_\tau$ is rotund and $\supp(\Re(L) \stint \trop(W)) \cap \relint (\tau )\neq \varnothing$.
	
	Consider the initial variety $W_\tau$. By Proposition \ref{initialinvariant}, it is invariant under translation by elements of $\exp(\tauc)$.
	
	By Proposition \ref{deltastru}, there is a Zariski-open dense subset $(W_\tau)^\circ$ of $W_\tau$ such that the $\delta$-map of $L \times W_\tau$ is open on $L \times (W_\tau)^\circ$. The variety $L \times (W_\tau)$ is also additively free, because $L \times W$ is.

    By Corollary \ref{zardenseint}, $\o{\exp(\relc)}=\exp(L) \cdot \mathbb{S}_1^n$ intersects $W_\tau$ in a Zariski-dense subset, and therefore it must in particular intersect $(W_\tau)^\circ$; then, let $w \in (W_\tau)^\circ$ be a point in this intersection, so that $w \in s_0 \cdot \exp(L)$ for some $s_0 \in \mathbb{S}_1^n$. The $\delta$-map, as we said, is open at $(0,w)$, and therefore there are neighbourhoods $V$ of $0$ in $L$ and $U$ of $w$ in $W$ such that $\frac{U}{\exp(V)} \subseteq \im(\delta)$ contains an open ball around $w$.
	
	Since $\Re(L) \cap \relint(\tau) \neq \varnothing$, we may consider a sequence $\{t_j\}_{j \in \omega} \subseteq \relc \cap \tauc$ such that $\lim_{j \in \omega} \exp(t_j) \in \mathcal{O}_\tau$ (this follows from Theorem \ref{tevelev}, since $\exp(t_j) \in \mathbb{T}_\tau$ for each $j$). Hence, $\lim_{j \in \omega} w \cdot \exp(t_j)=w \cdot \lim_{j \in \omega} \exp(t_j) \in \mathcal{O}_\tau$ as well, since the multiplicative action of $(\ctimes)^n$ on $Y_\Sigma$ is continuous and $\mathcal{O}_\tau$ is an orbit. Also, $\exp(t_j) \cdot U \subseteq W_\tau$ for each $j$, given that $W_\tau$ is invariant under translation by $\exp(\tauc)$. 
	
	Moreover, as $\{\exp(t_j) \}_{j \in \omega} \subseteq \exp(L) \cdot \mathbb{S}_1^n$, for each $j$ there is $l_j^0 \in L$ such that $\frac{\exp(t_j) \cdot w}{\exp\left(l_j^0\right)} \in \mathbb{S}_1^n.$ By compactness of $\mathbb{S}_1^n$, we may assume that the sequence $\left\{\frac{\exp(t_j) \cdot w}{\exp\left(l_j^0\right)} \right\}_{j \in \omega}$ has a limit, call it $s \in \mathbb{S}_1^n$. 
	
	Consider for each $j$ the open set $\frac{\exp(t_j) \cdot U}{\exp\left(l_j^0\right) \cdot \exp(V)}=\frac{\exp(t_j)}{\exp\left(l_j^0\right)} \cdot \frac{U}{\exp(V)}$. As the set $\frac{U}{\exp(V)}$ was built to contain an open ball around $w$, for $j$ sufficiently large we must have $s \in \frac{\exp(t_j)}{\exp\left(l_j^0\right)} \cdot \frac{U}{\exp(V)}$.
	
	Therefore we can take a sequence $\{s \cdot \exp(l_j)\}_{j \in \omega}$, where each $\exp(l_j)$ lies in  $\exp\left(l_j^0\right) \cdot \exp(V) \subseteq \exp(L)$ and each $s \cdot \exp(l_j)$ belongs to $\exp(t_j) \cdot U \subseteq W_\tau$; in other words, so that $\{s \cdot \exp(l_j)\}_{j \in \omega} \subseteq s \cdot \exp(L) \cap W_\tau$, as required.
\end{proof}

\begin{coro}\label{almostttau}
	Let $L \times W$ be an additively free, rotund variety, $L$ linear not defined over the reals, $\tau \in \trop(W)$ as given by Lemma \ref{geomcomtrop}, $U$ a neighbourhood of the identity in $(\ctimes)^n$. Then there is a sequence $\{t_j\}_{j \in \omega} \subseteq \tauc$ such that $\lim_j \exp(t_j) \in \mathcal{O}_\tau$ and $\exp(t_j) \cdot U \cap \exp(L) \neq \varnothing$ for all $j \in \omega.$
\end{coro}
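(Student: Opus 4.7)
The plan is to bypass most of Lemma \ref{geomcomtrop} and build $\{t_j\}$ explicitly from a carefully chosen vector $l \in L$, rescaled by an unbounded real sequence. The input I actually need from the construction of $\tau$ is the geometric fact provided by Theorem \ref{complextropical}: $\relint(\tau) \cap \Re(L) \neq \varnothing$.

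First I pick $x \in \relint(\tau) \cap \Re(L)$ and a lift $l \in L$ with $\Re(l) = x$. Let $q : \mathbb{C}^n \twoheadrightarrow \mathbb{C}^n / \tauc$ be the quotient. Because $\tau$ is a rational cone, $\tauc$ is rational and $q$ is defined over $\mathbb{R}$; moreover $x \in \Re(\tauc)$ forces $q(\Re(l)) = 0$, so $q(l) = i b$ is purely imaginary with $b := q(\Im(l)) \in \mathbb{R}^{n - \dim \tauc}$. Rationality also makes $q(\mathbb{Z}^n)$ a full-rank lattice in $\mathbb{R}^{n - \dim \tauc}$.

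I next produce the Diophantine approximation: the one-parameter subgroup generated by the class of $b$ inside the compact real torus $\mathbb{R}^{n - \dim \tauc}/2\pi q(\mathbb{Z}^n)$ has closure a subtorus through the identity, so by a Kronecker--Weyl style argument I can select $\lambda_j \to +\infty$ together with $k_j \in \mathbb{Z}^n$ satisfying $\lambda_j b - 2\pi q(k_j) \to 0$. Fixing any linear complement $C$ of $\tauc$ in $\mathbb{C}^n$, I decompose $\lambda_j l - 2\pi i k_j = t_j + \epsilon_j$ with $t_j \in \tauc$ and $\epsilon_j \in C$; since $q|_C$ is an isomorphism and $q(\epsilon_j) = i(\lambda_j b - 2\pi q(k_j)) \to 0$, the error term $\epsilon_j$ itself tends to $0$ in $\mathbb{C}^n$.

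The conclusion then falls out immediately:
\[
\frac{\exp(\lambda_j l)}{\exp(t_j)} \;=\; \exp(2\pi i k_j + \epsilon_j) \;=\; \exp(\epsilon_j) \;\to\; 1,
\]
so $\exp(\lambda_j l) \in \exp(t_j) \cdot U$ for all $j$ large enough, and discarding a finite prefix gives $\exp(t_j) \cdot U \cap \exp(L) \neq \varnothing$ for every $j \in \omega$. Meanwhile $\Re(t_j) = \lambda_j x - \Re(\epsilon_j)$ leaves every bounded set while staying inside the cone $\tau$, so $\exp(t_j) \in \mathbb{T}_\tau$ converges in $Y_\Sigma$ to a point of $\mathcal{O}_\tau$. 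The only delicate ingredient is the Kronecker--Weyl approximation step; the remainder is elementary linear algebra, facilitated by the purely imaginary structure of $q(l)$.
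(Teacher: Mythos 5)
Your proof is correct and takes a genuinely different route from the paper. The paper's own argument leans on the full output of Lemma \ref{geomcomtrop}: it starts from the sequence $\{s \cdot \exp(l_j)\}_{j \in \omega}$ constructed there (which already converges to a point of $\mathcal{O}_\tau$), passes to the quotient $(\ctimes)^n/\mathbb{T}_\tau$, and extracts the $t_j$'s from the condition $\exp(l_j) \cdot U \cap \mathbb{T}_\tau \neq \varnothing$ after a sign-symmetrization of $U$. You instead distill out the one geometric input that matters, namely $\relint(\tau) \cap \Re(L) \neq \varnothing$ from Theorem \ref{complextropical}, and rebuild the sequence from scratch: pick a lift $l \in L$ of a point $x \in \relint(\tau) \cap \Re(L)$, rescale by $\lambda_j \to \infty$, and use a Kronecker--Weyl recurrence in the compact torus $\mathbb{R}^{n - \dim\tauc}/2\pi q(\mathbb{Z}^n)$ (full rank because $\tau$ is rational) to absorb $q(\lambda_j l)$ into $2\pi i \mathbb{Z}^n + \tauc$ up to an error $\epsilon_j \to 0$. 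The verification that $\Re(t_j) = \lambda_j x - \Re(\epsilon_j)$ eventually enters and escapes to infinity through $\relint(\tau)$ is sound: note $\Re(\epsilon_j) = \lambda_j x - \Re(t_j) \in \aff(\tau)$ automatically, so the perturbation stays in the relevant real span, and the distance from $\lambda_j x$ to the relative boundary of $\tau$ grows linearly in $\lambda_j$. What this buys: your argument is more elementary and self-contained for this corollary — it needs neither the open-mapping machinery behind Lemma \ref{geomcomtrop} nor the Zariski-density input from Corollary \ref{zardenseint} — and it makes explicit where rationality of $\tau$ is used. What the paper's route buys: it reuses a sequence that has to be constructed anyway for Lemma \ref{reductiontosequence}, so nothing extra is proved. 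One small caveat worth spelling out if you wrote this up: choosing the complement $C$ defined over $\mathbb{R}$ (which is possible since $\tauc$ is rational) forces $\epsilon_j$ to be purely imaginary and cleans up the bookkeeping, giving $\Re(t_j) = \lambda_j x$ exactly.
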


\begin{proof}
	By replacing $U$ by $U \cap U^{-1}$ if necessary we assume that $U$ is symmetric.

Consider the sequence $\{s \cdot \exp(l_j)\}_{j \in \omega}$ given by Lemma \ref{geomcomtrop}. 

As $\lim_j s \cdot \exp(l_j) \in \mathcal{O}_\tau \cong (\ctimes)^n/\mathbb{T}_\tau$, there is a translate $w \cdot \mathbb{T}_\tau$ such that $\lim_j s \cdot \exp(l_j) \cdot \mathbb{T}_\tau=w \cdot \mathbb{T}_\tau$. Then we may assume that $s \cdot \exp(l_j) \cdot U \cap w \cdot \mathbb{T}_\tau \neq \varnothing$ for all $j \in \omega.$ Therefore, we automatically get that for all sufficiently large $j$'s, $$\frac{s \cdot \exp(l_j)}{w} \cdot U \cap \mathbb{T}_\tau \neq \varnothing$$ $$\frac{s}{w} \cdot \exp(l_j) \cdot U \cap \mathbb{T}_\tau \neq \varnothing$$ and this implies that taking a subsequence if necessary we may assume $$\exp(l_j) \cdot U \cap \mathbb{T}_\tau \neq \varnothing.$$ Hence, we can extract from each $\exp(l_j) \cdot U \cap \mathbb{T}_\tau$ a point $a_j \in \mathbb{T}_\tau$. Taking a sequence $\{t_j\}_{j \in \omega}$, where for each $j$ we have $\exp(t_j)=a_j$, we prove the corollary.
\end{proof}

Next we show that if we can take $s$ in the statement of Lemma \ref{geomcomtrop} to be the identity $(1,\dots, 1)$, then we can use the points in the sequence in $\exp(L) \cap W_\tau$ as approximations for points in $\exp(L) \cap W$.

\begin{lem}\label{reductiontosequence}
	Let $L \times W$ be an additively free rotund variety, $L$ linear not defined over the reals. Let $\Sigma=\trop(W)$, and $Y_\Sigma$ be the toric variety associated to the fan. Suppose there is $\tau \in \Sigma$, $\dim (\tau)>0$, such that:
	
	\begin{itemize}
		\item[1.] $L \times W_\tau$ is rotund;
		\item[2.] $\exp(L) \cap W_\tau$ contains a point $w$ such that the $\delta$-map of $L \times W_\tau$ is open at $(0,w)$. 
	\end{itemize}
	
	Then $\exp(L) \cap W \neq \varnothing$.
\end{lem}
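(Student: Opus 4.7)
The strategy is to leverage the openness of the $\delta$-map at $w$ together with Tevelev's tropical compactification, which realises $W_\tau$ as the ``boundary of $W$ at infinity in the direction $\tau$'', in order to promote the near-solution $w\in\exp(L)\cap W_\tau$ to an exact intersection point of $\exp(L)\cap W$. I would first reformulate the goal: writing $w=\exp(l_0)$ with $l_0\in L$, one has $\delta_\tau(l_0,w)=1$, so by the translation-invariance argument of Proposition \ref{deltastru}, openness of $\delta_\tau$ at $(0,w)$ transfers to openness at $(l_0,w)$. Hence $\im(\delta_\tau)$ contains an open ball $B$ around $1\in(\ctimes)^n$, and showing $\exp(L)\cap W\neq\varnothing$ is equivalent to showing $1\in\im(\delta_W)$.

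The key geometric input is that, for $s\in\tauc$ with $\Re(s)$ going to infinity in $\tau$, the translated varieties $\exp(-s)\cdot W$ converge to $W_\tau$ uniformly on compacta. This follows directly from the initial-form construction: if $f_1,\dots,f_d$ define $W$ with initial forms $\ini_\tau(f_j)$ defining $W_\tau$, then suitably normalised rescalings $\exp(-s\cdot\alpha_j^*)\cdot f_j(\exp(s)\cdot -)$ (with $\alpha_j^*$ a $\tau$-maximising exponent) converge to $\ini_\tau(f_j)$. Consequently, the $\delta$-map $\delta_s$ of $L\times\exp(-s)W$ converges $C^1$ to $\delta_\tau$ near $(l_0,w)$. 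Openness of $\delta_\tau$ at $(l_0,w)$ — which at a smooth point amounts to $\delta_\tau$ being a local submersion — therefore persists under this perturbation, by the implicit function theorem applied at a smooth point of $L\times W_\tau$ (which we may assume $(l_0,w)$ to be, using the Zariski-density of the openness locus given by Proposition \ref{deltastru} together with the Zariski-denseness of $\exp(L)\cap W_\tau$ in $W_\tau$ established in Corollary \ref{zardenseint}). Hence for all $s$ with $\Re(s)$ deep enough in $\tau$ there exists $(l_s, v'_s)\in L\times\exp(-s)W$ close to $(l_0,w)$ with $\delta_s(l_s,v'_s)=1$, i.e.\ $v'_s=\exp(l_s)$. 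Translating back, $v_s:=\exp(s)v'_s=\exp(l_s+s)$ lies in $W$.

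The final step is to arrange that $v_s\in\exp(L)$, i.e.\ $s\in L+2\pi i\mathbb{Z}^n$ (since $l_s\in L$). Here I would use the hypothesis that $\tau$ was constructed so that $\supp(\Re(L)\cap_{st}\trop(W))$ meets $\relint(\tau)$ (the second condition of Theorem \ref{complextropical}): picking $l^*\in L$ with $\Re(l^*)\in\relint(\tau)$ gives a ray in $L$ whose real part escapes to infinity in $\tau$, and I would use a Kronecker-type density argument on the quotient $\mathbb{C}^n/(L+2\pi i\mathbb{Z}^n)$ to produce a sequence $s_k\in\tauc\cap(L+2\pi i\mathbb{Z}^n)$ with $\Re(s_k)\to\infty$ in $\tau$. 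Any residual Diophantine error $\varepsilon_k\to 0$ in the approximation can be absorbed into the openness slack provided by the ball $B$, since $\delta_s$ is surjective onto a neighbourhood of $1$, not just onto $\{1\}$.

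The main obstacle is the last step: ensuring that the degeneration direction $s_k\in\tauc$ is simultaneously compatible with the lattice $L+2\pi i\mathbb{Z}^n$. The content of the hypothesis $\Re(L)\cap\relint(\tau)\neq\varnothing$ is precisely to guarantee that this Diophantine compatibility can be achieved (in the real case, one can take $s_k\in\Re(L)\cap\tau$ directly, trivialising the issue; in the non-real case one must genuinely use the density of the real part of $L$ to navigate the imaginary directions forced by $\tauc$). The convergence of $\delta_s$ to $\delta_\tau$ and the $C^1$-openness transfer are standard once one restricts to smooth points.
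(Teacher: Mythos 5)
Your overall strategy — using the approximate solution $w\in\exp(L)\cap W_\tau$ and the fact that $W$ looks like $W_\tau$ at infinity in direction $\tau$ to perturb to an exact solution — matches the paper's proof in spirit. But there are two genuine gaps in the execution.

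\textbf{Gap 1: the smoothness and IFT step.} You try to replace the given $w$ by a smooth point of $W_\tau$ in order to apply the implicit function theorem, citing Corollary~\ref{zardenseint} for Zariski-density of $\exp(L)\cap W_\tau$. That corollary only applies when the linear space is defined over $\mathbb{R}$; in the present lemma $L$ is explicitly \emph{not} defined over the reals. (In the proof of Lemma~\ref{geomcomtrop} the paper does cite Corollary~\ref{zardenseint}, but applied to $\relc$, the complexification of $\Re(L)$, which is over $\mathbb{R}$; that gives a point in $\exp(L)\cdot\mathbb{S}_1^n\cap W_\tau$, not in $\exp(L)\cap W_\tau$.) Moreover, even at a smooth point of $W_\tau$, openness of the $\delta$-map with discrete fibres does not imply the differential is surjective (think of $z\mapsto z^2$ at the origin), so the IFT/submersion reasoning cannot be invoked as stated. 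The paper avoids both issues by using Proposition~\ref{proper} (properness detected by absence of limit points on the boundary) together with the fact that proper$+$open implies surjective onto the target ball; this is robust to singularities and to non-submersive open maps, and it works directly at the given $w$.

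\textbf{Gap 2: the Diophantine step.} You propose a sequence $s_k=kl^*$ with $l^*\in L$ and $\Re(l^*)\in\relint(\tau)$, and then a ``Kronecker-type'' argument to land in $L+2\pi i\mathbb{Z}^n$. But nothing forces $l^*\in\tauc$: the condition $\Re(l^*)\in\relint(\tau)$ only constrains the real part, and if $\Im(l^*)\notin\aff(\tau)$ then $s_k\notin\tauc$, so $-s_k+\log W_\tau$ near $z$ is \emph{not} the same set as $\log W_\tau$ near $z$, and $\exp(s_k)$ need not converge to the orbit $\mathcal{O}_\tau$ in $Y_\Sigma$. The paper's Corollary~\ref{almostttau} does genuine work here: it produces a sequence $\{t_j\}\subseteq\tauc$ (not merely with real part in $\tau$) such that $\exp(t_j)\cdot U$ meets $\exp(L)$, by passing through the sequence $\{s\cdot\exp(l_j)\}$ constructed in Lemma~\ref{geomcomtrop} and reading off elements of $\mathbb{T}_\tau$ from translate cosets. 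Your remark that ``any residual Diophantine error can be absorbed'' is correct in principle, but it is exactly the content that needs a precise argument, and the direction constraint $s_k\in\tauc$ is an additional obstacle your sketch does not address.

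The claim that $\exp(-s)\cdot W$ converges to $W_\tau$ uniformly on compacta for $s\in\tauc$ deep in $\tau$ is correct and corresponds to the paper's Hausdorff-convergence step $\o{K_j}\to\o{K}$; that part of the intuition is sound.
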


\begin{proof}
Let $w$ be the point given by Assumption 2$.$ Take $z \in \exp^{-1}(w)$.

Let $L^*$ be any subspace of $\mathbb{C}^n$ such that $L \oplus L^*=\mathbb{C}^n$; let $\pi_L:\mathbb{C}^n \rightarrow \mathbb{C}^n/L \cong L^*$ denote the composition of the projection on the quotient with the isomorphism between $\mathbb{C}^n/L$ and $L^*$. Since the $\delta$-map of $L \times W_\tau$ is open at $(0,w)$, the point $z$ is isolated in $z+L \cap \log W_\tau$: hence, there is a ball $B_1$ around 0 in $L$ such that $z+\partial B_1 \cap \log W_\tau=\varnothing$. Given $B_1$, there must be a ball $B_2$ around 0 in $L^*$ such that for all $x \in z+B_1+B_2 \cap \log W_\tau$, $x+\partial B_1 \cap \log W_\tau = \varnothing$: this is because $z+\partial B_1$ is compact and $\log W_\tau$ is closed, therefore if their intersection is empty then it must be empty for all translates of $z+\partial B_1$ that are sufficiently close to $z$. Let $U$ denote the bounded open neighbourhood of $0$ in $\mathbb{C}^n$ given by $B_1+B_2$.

By Proposition \ref{proper}, then, the projection $\pi_L$ is proper as a map from $z+U \cap \log W_\tau$ to $\pi_L(z)+B_2$: this is because $z+U \cap \log W_\tau$ has no limit points on $z+ \partial B_1 + B_2 \cap \log W_\tau$. We already know the projection is open on this domain, and therefore it has to be surjective. In other words, $(z+U \cap \log W_\tau) + L=z+B_2+L$.

By Corollary \ref{almostttau}, there is a sequence $\{t_j\}_{j \in \omega} \subseteq \tauc$ such that $z+t_j+U \cap L \neq \varnothing$ for all $j \in \omega$.

It is easy to see that, since $\log W_\tau$ is invariant under translation by $\tauc$, $(t_j+z+U) \cap \log W_\tau=t_j+(z+U \cap \log W_\tau)$ for each $j$. Hence, all the sets $U \cap -(z+t_j) +\log W_\tau$ are equal to a set $K$. Note that, by the choice of the sets $B_1$ and $B_2$, $\o{K} \cap (\partial B_1+B_2)=\varnothing$.

On the other hand, as $\log W$ is not invariant under translation by $\tauc$, the sets $K_j:=U \cap -(z+t_j)+\log W$ may not be all equal, and their closures $\o{K_j}$ converge in the Hausdorff distance to $\o{K}$. This means that for sufficiently large $j$'s, $\o{K_j} \cap (\partial B_1 + B_2)=\varnothing$: if this were not the case, there would be a point in $K \cap(\partial B_1 \cap B_2)$. Therefore, for sufficiently large $j$'s, the map $\pi_L$ is open and proper (again by Proposition \ref{proper}) as a map from $K_j$ into $B_2$, and therefore $K_j+L=B_2+L$: we assume this holds for all $j$.

Since $K_j \subseteq -(z+t_j)+\log W$, $z+t_j+K_j \subseteq \log W$. Also, $z+t_j+K_j+L=z+t_j+B_2+L$ for all $j$'s. As $(z+t_j+B_2+L) \cap L \neq \varnothing$ (it contains $z+t_j+U$, and thus a point which lies in $\log W_\tau \cap L$), we may thus conclude: there are $z' \in z+t_j+K_j \subseteq \log W$ and $l \in L$ such that $z'+l \in L$, and thus $z' \in L$.
\end{proof}

Finally we set out to prove that the intersection with the original variety exists. This is done by induction, exploiting first the simple geometric structure of initial varieties of curves and then the fact that initial varieties are invariant under translation by subgroups.

\begin{prop}\label{curve}
	Let $L \times W$ be an additively free rotund variety with $L$ linear not defined over the reals and $\codim L =\dim W=1$. Then there is $\tau \in \trop(W)$, $\dim(\tau)>0$, such that:
	
	\begin{itemize}
		\item[1.] $L \times W_\tau$ is rotund;
		\item[2.] There is $w \in \exp(L) \cap W_\tau$ such that the $\delta$-map of $L \times W_\tau$ is open at $(0,w)$.
	\end{itemize}
\end{prop}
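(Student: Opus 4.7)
My plan is to combine Theorem \ref{complextropical} with the particularly simple structure of $W_\tau$ when $W$ is a curve. Since $\dim W = 1$, the Structure Theorem makes $\trop(W)$ a pure $1$-dimensional polyhedral complex, so any cell $\tau \in \trop(W)$ with $\dim \tau > 0$ is automatically a facet. By the discussion following Lemma \ref{startrop}, the initial variety $W_\tau$ is then a finite union $\bigcup_{i=1}^k w_i \mathbb{T}_\tau$ of cosets of the one-dimensional algebraic subgroup $\mathbb{T}_\tau = \exp(\tauc)$. Theorem \ref{complextropical} provides such a $\tau$ for which $L \times W_\tau$ is rotund, settling conclusion (1).

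For conclusion (2), the first step is to test rotundity of $L \times W_\tau$ against the $\mathbb{Q}$-linear subspace $\tauc$ itself — this is rational because $\trop(W)$ is a rational polyhedral fan over the trivially valued field $\mathbb{C}$. Since $W_\tau$ collapses to finitely many points modulo $\mathbb{T}_\tau$, we have $\dim \pi_{\exp(\tauc)}(W_\tau) = 0$, and rotundity forces $\dim \pi_{\tauc}(L) \geq n - 1 = \dim L$. This equality forces $L \cap \tauc = \{0\}$, which combined with $\dim L + \dim \tauc = n$ gives $L \oplus \tauc = \mathbb{C}^n$. Exponentiating yields $\exp(L) \cdot \mathbb{T}_\tau = \exp(L + \tauc) = (\ctimes)^n$, so every coset $w_i \mathbb{T}_\tau$ meets $\exp(L)$; I fix any $w \in \exp(L) \cap W_\tau$.

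It remains to verify openness of the $\delta$-map of $L \times W_\tau$ at $(0,w)$, which I carry out via Theorem \ref{remop} and Corollary \ref{remco}. A direct description shows the fiber $\delta^{-1}(w)$ consists of the pairs $(l, w\exp(l))$ with $l \in L$ and $\exp(l) \in w^{-1} W_\tau$. Now $\exp^{-1}(w^{-1}W_\tau)$ is a countable disjoint union of affine translates of $\tauc$ — namely translates by lifts of the $w^{-1}w_i$ and by the kernel $2\pi i \mathbb{Z}^n$ — and each such translate meets $L$ transversely in exactly one point because $L$ and $\tauc$ are complementary in $\mathbb{C}^n$. Hence the fiber is discrete, of the expected dimension $0 = \dim(L \times W_\tau) - \dim(\ctimes)^n$, and Theorem \ref{remop} yields openness at $(0, w)$. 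The whole argument is quick once Theorem \ref{complextropical} is available; the only subtle point is the rotundity calculation with $Q = \tauc$ that forces $L$ and $\tauc$ to be complementary, after which the coset decomposition of $W_\tau$ makes everything line up automatically.
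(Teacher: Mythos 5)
Your proof is correct and follows the same overall template as the paper's (get $\tau$ from Theorem \ref{complextropical}, exploit that $W_\tau$ is a finite union of $\mathbb{T}_\tau$-cosets), but the two middle steps are done by a genuinely different route, and it is worth comparing. The paper starts from Proposition \ref{deltastru}: rotundity of $L\times W_\tau$ gives that the $\delta$-map is open on a Zariski-dense subset $L\times(W_\tau)^\circ$, and then, using that $\dim W_\tau=1$, argues that this forces openness on all of $L\times W_\tau$; the image of $\delta$ is then a finite union of cosets of the subgroup $\mathbb{T}_\tau\cdot\exp(L)$ with non-empty interior, hence all of $(\ctimes)^n$, and $1$ being in the image gives the required $w$. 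You instead test rotundity of $L\times W_\tau$ directly against the rational subspace $Q=\tauc$: since $\pi_{\exp(\tauc)}(W_\tau)$ is finite, rotundity forces $\dim\pi_{\tauc}(L)\geq n-1=\dim L$, i.e. $L\cap\tauc=\{0\}$, and with $\dim L+\dim\tauc=n$ this gives $L\oplus\tauc=\mathbb{C}^n$ and hence $\exp(L)\cdot\mathbb{T}_\tau=(\ctimes)^n$, so every coset of $\mathbb{T}_\tau$ meets $\exp(L)$. This gives the intersection point without going through the image of $\delta$ at all. You then verify openness at $(0,w)$ by an explicit fiber computation: the fiber is in bijection with $L\cap\exp^{-1}(w^{-1}W_\tau)$, a countable union of affine translates of $\tauc$ each meeting $L$ in a single point because of the complementarity you just established; this intersection is a $0$-dimensional closed analytic subset of $L$, hence discrete, so Corollary \ref{remco} applies (you cite Theorem \ref{remop} in the final line — since the same computation works for every fiber that is also fine, but Corollary \ref{remco} is the direct reference for pointwise openness). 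What the paper's argument buys is brevity and a uniform style with the surrounding proofs (everything is phrased through the $\delta$-map and Proposition \ref{deltastru}); what yours buys is transparency — the direct-sum decomposition $L\oplus\tauc=\mathbb{C}^n$ makes it manifest both why $\exp(L)$ hits $W_\tau$ and why the fibers are discrete, whereas the paper's ``open on a dense subset plus $\dim W_\tau=1$ implies open everywhere'' step is left rather terse. One small thing worth spelling out in your write-up: the inference from ``each translate of $\tauc$ meets $L$ in exactly one point'' to ``the fiber is discrete'' should note that the countably many intersection points form a $0$-dimensional complex analytic subset of $L$ (intersection of $L$ with the closed analytic set $\exp^{-1}(w^{-1}W_\tau)$), and $0$-dimensional analytic sets have no accumulation points; without that remark one might worry that countably many transverse intersection points could still accumulate.
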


\begin{proof}
	Let $\tau \in \trop(W)$ be the polyhedron given by Lemma \ref{geomcomtrop}. The initial variety $W_\tau$ is then a finite union of cosets of the subgroup $\mathbb{T}_\tau$. By rotundity of $L \times W_\tau$, the $\delta$-map of this variety is open (as $W_\tau$ has dimension 1, if it is open on a Zariski-open dense subset then it is open everywhere) and therefore its image is a subgroup of $(\ctimes)^n$ with non-empty interior - that is, it is $(\ctimes)^n$. Then $\exp(L) \cap W_\tau \neq \varnothing$, and the $\delta$-map is open everywhere. 
\end{proof}

This allows us to establish our main result.

\begin{thm}\label{chap3main}
	Let $L \times W$ be an additively free rotund variety, $L$ a linear space. Then $\exp(L) \cap W \neq \varnothing$.
\end{thm}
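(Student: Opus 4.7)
The plan is to proceed by strong induction on $\dim W$, establishing simultaneously non-emptiness of $\exp(L) \cap W$ and its Zariski-density in $W$. The case in which $L$ is defined over $\mathbb{R}$ is handled in all dimensions by Theorem \ref{realpowers} (and Corollary \ref{zardenseint} for density), so one may assume $L$ is not defined over $\mathbb{R}$. By Lemma \ref{generichypexp} one may further assume $\dim L = \codim W$. The base case $\dim W = 1$ is then handled by combining Proposition \ref{curve} with Lemma \ref{reductiontosequence}, which already gives both a point and the required openness of the $\delta$-map.

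For the inductive step with $\dim W \geq 2$, the strategy is to descend to a lower-dimensional ambient space. I would apply Theorem \ref{complextropical} to produce a cell $\tau \in \trop(W)$ of positive dimension such that $L \times W_\tau$ is rotund, and consider the projection $\pi\colon \mathbb{C}^n \twoheadrightarrow \mathbb{C}^n/\tauc$ (and multiplicatively, onto $(\ctimes)^n/\exp(\tauc)$). By Proposition \ref{initialinvariant}, $W_\tau$ is $\exp(\tauc)$-invariant, so $\pi(W_\tau)$ has dimension $\dim W - \dim \tau < \dim W$. I would then verify that the quotient variety $\pi(L) \times \pi(W_\tau)$ remains additively free and rotund: since tropicalisations of varieties in $(\ctimes)^n$ are rational polyhedral complexes, $\tauc$ is $\mathbb{Q}$-linear, so any $\mathbb{Q}$-linear subspace of the quotient pulls back to a proper $\mathbb{Q}$-linear subspace of $\mathbb{C}^n$, preserving additive freeness; rotundity is checked by pulling back the test subspaces in Definition \ref{freerotund} and invoking rotundity of $L \times W_\tau$.

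By the inductive hypothesis, $\exp(\pi(L)) \cap \pi(W_\tau)$ is Zariski-dense in $\pi(W_\tau)$. To lift this: if $p = \exp(\pi(l_0)) = \pi(w_0)$ for $l_0 \in L$, $w_0 \in W_\tau$, then $w_0 = \exp(l_0) \cdot t$ for some $t \in \exp(\tauc)$, and invariance of $W_\tau$ gives $\exp(l_0) = w_0 \cdot t^{-1} \in \exp(L) \cap W_\tau$. Hence the lift $(\exp(L) \cap W_\tau) \cdot \exp(\tauc)$ is Zariski-dense in $W_\tau$, so it meets the Zariski-open dense locus $(W_\tau)^\circ$ supplied by Proposition \ref{deltastru}. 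A direct check shows that $(W_\tau)^\circ$ is itself $\exp(\tauc)$-invariant, since for $t \in \exp(\tauc)$ one has $\delta(l, wt) = t \cdot \delta(l, w)$, and multiplication by $t$ is a homeomorphism preserving openness. Therefore a representative in $\exp(L) \cap W_\tau$ itself lies in $(W_\tau)^\circ$, and Lemma \ref{reductiontosequence} applies, giving $\exp(L) \cap W \neq \varnothing$. Zariski-density at this stage follows from the Rabinowitsch argument of Corollary \ref{zardenseint} applied to the non-emptiness just established, which closes the induction.

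The main obstacle I expect is the bookkeeping around the openness condition required by Lemma \ref{reductiontosequence}: mere non-emptiness of $\exp(L) \cap W_\tau$ comes essentially for free from the projected non-emptiness, but ensuring that some intersection point lies in the generic locus $(W_\tau)^\circ$ is the subtle point. The key observation that makes the descent run cleanly is that both $W_\tau$ and $(W_\tau)^\circ$ are $\exp(\tauc)$-invariant, which combined with the strengthened Zariski-density inductive hypothesis forces the lifted point to land in $(W_\tau)^\circ$ without any further manipulation.
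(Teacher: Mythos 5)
Your proposal is correct and follows essentially the same route as the paper: induction on $\dim W$, with the base case from Theorem \ref{realpowers} or Proposition \ref{curve} plus Lemma \ref{reductiontosequence}, and the inductive step descending via the quotient by $\tauc \times \mathbb{T}_\tau$ for the cell $\tau$ supplied by Theorem \ref{complextropical} (equivalently Lemma \ref{geomcomtrop}). The only real difference is a bookkeeping choice: where the paper applies the Rabinowitsch trick inside the inductive step to arrange that the inherited intersection point sits in the openness locus $(W_\tau)^\circ$, you instead thread Zariski-density through the inductive hypothesis and exploit the $\exp(\tauc)$-invariance of $(W_\tau)^\circ$ — an equivalent and arguably tidier way to land the lifted point where Lemma \ref{reductiontosequence} needs it.
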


\begin{proof}
	We prove this by induction on the dimension of $W$.
	
	Assume $\dim W=1$. If $L$ is defined over the reals, then this follows from Theorem \ref{realpowers}. Otherwise, by Proposition \ref{curve} there is a point $w \in \exp(L) \cap W_\tau$ with $L \times W_\tau$ rotund, such that the $\delta$-map is open around $(0,w)$. Hence by Lemma \ref{reductiontosequence}, $\exp(L) \cap W \neq \varnothing$.
	
	Now assume $\dim W > 1$. Again, if $L$ is defined over the reals then we apply Theorem \ref{realpowers}. Otherwise, consider the polyhedron $\tau \in \trop(W)$ given by Lemma \ref{geomcomtrop}: the initial variety $W_\tau$ is then invariant under translation by the algebraic group $\mathbb{T}_\tau$. Consider then the quotients $\pi_{\tauc}(L)$ and $\pi_{\mathbb{T}_\tau}(W_\tau)$: their product $\pi_{\tauc}(L) \times \pi_{\mathbb{T}_\tau}(W_\tau)$ is additively free because $\pi_{\tauc}(L)$ is a quotient of $L$ (here we need additive freeness of $L$), it is rotund because it is a quotient of a rotund variety, and $\dim (\pi_{\mathbb{T}_\tau}(W_\tau))< \dim W$ because $\dim W_\tau=\dim W$ and $W_\tau$ is invariant under translation by $\mathbb{T}_\tau$. By the induction hypothesis, $\exp(\pi_{\tauc}(L)) \cap \pi_{\mathbb{T}_\tau}(W) \neq \varnothing$. By applying the Rabinowitsch trick if necessary (as that does not change the dimension of $\pi_{\mathbb{T}_\tau}(W)$) we may further assume that there is a point $w$ in this intersection such that the relevant $\delta$-map is open at $(0,w)$.
	
	Therefore, there is $(l,w) \in L \times W_\tau$ such that the $\delta$-map of $L \times W_\tau$ is open at $(l,w)$, and $\frac{w}{\exp(l)} \in \mathbb{T}_\tau$. Since $W_\tau$ is invariant under translation by $\mathbb{T}_\tau$, this implies that actually $\frac{w}{t} \in \exp(L) \cap W_\tau$ for some $t \in \mathbb{T}_\tau$, and the $\delta$-map is open at $\left(0, \frac wt\right)$ - this is again implied by invariance of $W_\tau$ under $\mathbb{T}_\tau$, as if the $\delta$-map is open at $(0,w)$ it must be open at every point of $\{0\} \times (w \cdot \mathbb{T}_\tau)$ by definition. Hence, by Lemma \ref{reductiontosequence}, $\exp(L) \cap W \neq \varnothing$.
\end{proof}

\begin{exa}
	We conclude by testing this argument in the example $$\begin{cases}
	iz_1-z_2=0 \\
	w_1+w_2+1=0 \\ 
	\end{cases}$$
	
	In this case obviously $\Re(L)=\mathbb{R}^2$, and for every initial variety $W_\tau$ we have that $L \times W_\tau$ is rotund (the fact that the dimension is low makes it very easy to check: rotundity of $L \times W_\tau$ just means that $\log W_\tau$ and $L$ are not parallel as complex lines).
	
	So let us fix an initial variety, say $W_\tau$ for $\tau$ the positive half-line spanned by $(1,1)$. The initial variety $W_\tau$ is then defined by $w_1+w_2=0$, and $$\log W_\tau=\{(z_1,z_2) \in \mathbb{C}^2 \mid z_1-z_2 \in (2\mathbb{Z}+1) \pi i \ \}.$$
	
	The points of $L \cap \log W_\tau$ are easy to find: solving $$\begin{cases}
	iz_1-z_2=0 \\
	z_1-z_2=ik\pi \\ 
	\end{cases}$$ for all odd $k \in\mathbb{Z}$, one gets the points $$\frac{2h+1}{2}(\pi-i\pi, \pi + i \pi)$$ for all $h \in \mathbb{Z}$. Considering those with $h \gg 0$, the points will be very close to points in $\log W$. Hence there are sequences $\{a_j\}_{j\in \omega} \subseteq W$ and $\{b_j\}_{j \in \omega} \subseteq (W_\tau) \cap \exp(L)$ with the following properties:
    \begin{itemize}
        \item[1.] The two sequences converge to the same limit in the tropical compactification of $W$.
        \item[2.] A sequence of neighbourhoods of the $b_j$'s can be approximated by a sequence of neighbourhoods of the $a_j$'s.
        \item[3.] The $\delta$-map ot $L \times W_\tau$ is open at each $(0,b_j)$.
    \end{itemize}
Combining these three properties, we get that there must be a sequence $\{U_j\}_{j  \in \omega}$ such that each $U_j$ is a neighbourhood of $a_j$, and a neighbourhood $U$ of 0 in $L$ such that the $\delta$-map of $L \times W$ is open on each $U \times U_j$. Hence for very large $j$ we will have that $\frac{U_j}{\exp(U)}$ is an open set intersecting $\exp(L)$, thus giving a point in the intersection $W \cap \exp(L)$.
\end{exa}

Finally, let us note that the intersection is again Zariski-dense.

\begin{coro}
	Let $L \times W$ be an additively free rotund variety, $L$ a linear space. Then $\exp(L) \cap W$ is Zariski-dense in $W$.
\end{coro}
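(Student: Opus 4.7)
The plan is to mimic the proof of Corollary \ref{zardenseint} verbatim, replacing the appeal to Theorem \ref{realpowers} by an appeal to the just-established Theorem \ref{chap3main}. The underlying mechanism is the Rabinowitsch trick, which has already been used twice in the paper (in Lemma \ref{reduction 1} and in Corollary \ref{zardenseint}), so no new geometric idea is required; the only task is to check that the construction stays within the class of varieties to which Theorem \ref{chap3main} applies when $L$ is allowed to be an arbitrary complex linear space.

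Concretely, fix a non-zero regular function $F: W \to \mathbb{C}$; it suffices to produce a point of $\exp(L) \cap W$ at which $F$ does not vanish, since such points are Zariski-dense in $W$. Set $L' := L \times \mathbb{C} \leq \mathbb{C}^{n+1}$ and
\[
W' := \{(w_1,\dots,w_{n+1}) \in (\ctimes)^{n+1} \mid (w_1,\dots,w_n) \in W \wedge F(w_1,\dots,w_n) = w_{n+1}\}.
\]
I would then verify two properties of $L' \times W' \subseteq \mathbb{C}^{n+1} \times (\ctimes)^{n+1}$: additive freeness and rotundity. Additive freeness is immediate since $\pi_1(L' \times W') = L' = L \times \mathbb{C}$ and $L$ is itself not contained in a translate of a $\mathbb{Q}$-linear proper subspace of $\mathbb{C}^n$. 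Rotundity is the only nontrivial point, but it is handled exactly as in Lemma \ref{reduction 1}: the $\delta$-map of $L' \times W'$ factors through the $\delta$-map of $L \times W$ times an identity on the $(n{+}1)$-th coordinate, so Proposition \ref{openrot} together with the rotundity of $L \times W$ gives rotundity of $L' \times W'$.

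Having checked the hypotheses, Theorem \ref{chap3main} supplies a point $(w_1,\dots,w_{n+1}) \in \exp(L') \cap W'$. By construction the first $n$ coordinates lie in $\exp(L) \cap W$, and the last coordinate equals $F(w_1,\dots,w_n)$; since $w_{n+1} \in \ctimes$ we have $F(w_1,\dots,w_n) \neq 0$. As $F$ was arbitrary, every non-empty Zariski-open subset of $W$ meets $\exp(L)$, proving Zariski density.

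The argument is essentially mechanical; there is no real obstacle beyond making sure that the Rabinowitsch construction preserves rotundity in the present setting, which is already done in the paper. The proof can therefore be written in a single short paragraph by invoking the previous verifications and Theorem \ref{chap3main} in place of Theorem \ref{realpowers}.
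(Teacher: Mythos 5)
Your proposal is correct and follows exactly the same route as the paper, which simply invokes "the same proof as Corollary \ref{zardenseint}" with Theorem \ref{chap3main} standing in for Theorem \ref{realpowers}. The verification that the Rabinowitsch construction $L' = L \times \mathbb{C}$, $W' = $ graph of $F$ preserves additive freeness and rotundity is precisely the content of Lemma \ref{reduction 1}, so nothing new is needed.
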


\begin{proof}
	By the same proof as Corollary \ref{zardenseint}.
\end{proof}

\printbibliography

\end{document}